\newtheorem {theorem*}{Theorem}
\newtheorem {theorem} {Theorem}
\newtheorem {proposition} [theorem]{Proposition}
\newtheorem {corollary} [theorem]{Corollary}
\newtheorem {lemma}  [theorem]{Lemma}
\newtheorem {example} [theorem]{Example}
\begin{document}

\title[ On structural stability of quasihomogeneous
vector fields] {\bf On the structural stability of planar quasihomogeneous polynomial vector fields}

\author[Regilene D. S. Oliveira  and Yulin Zhao]
{Regilene D. S. Oliveira  and Yulin Zhao}
\address{Regilene D. S. Oliveira,
Departamento de Matem\'atica, Instituto de Ci\^encias Matem\'aticas e
de Computa\c c\~ao,  Universidade de S\~ao Paulo,  Caixa Postal 668, 13560-970, Sao Carlos, SP, Brazil}

\email{regilene@icmc.usp.br}

\address{Yulin Zhao, Department of Mathematics, Sun Yat-sen University, Guangzhou, 510275, People's Republic of  China. }

\email{mcszyl@mail.sysu.edu.cn}

\thanks{ \noindent 2000 {\it Mathematics Subject Classification}.
Primary 34C05, 34A34, 34C14.\\
 {\it Key words and phrases}. Structural stability, quasihomogeneous vector fields. \\
The first author is partially supported by FAPESP project: 2009/08774-0. The second author is  supported by the NSF of China (No.11171355), the Ph.D. Programs Foundation of Ministry of Education of China (No. 20100171110040)  and Program for New Century Excellent Talents in University. }

\keywords{}
\date{}
\dedicatory{}

\maketitle

\maketitle

\begin{abstract}

Denote by $H_{pqm}$ the space of all planar $(p,q)$-quasihomogeneous vector fields of degree $m$ endowed with the coefficient topology. In this paper we characterize the set $\Omega_{pqm}$ of the vector fields in $H_{pqm}$ that are structurally stable with respect to perturbations in $H_{pqm}$, and  determine the exact number of the topological equivalence classes in $\Omega_{pqm}$. The  characterisation is applied to give an extension of the Hartman-Grobmann Theorem for such family of planar polynomial vector fields.  It follows from the main result in this paper that, for a given $X \in H_{pqm}$ we give a explicit method to decide whether it is structurally stable with respect to perturbation in $H_{pqm}$  before finding the vector field induced by $X$ in the Poincar\'e-Lyapunov sphere. This work is an extension and an improvement of the Llibre-Perez-Rodriguez's paper \cite{LRR}, where the homogeneous case was considered. More precisely, if both $p$ and $q$ are odd, the main results of this paper are similar to those of the Llibre-Perez-Rodriguez's paper; if either $p$ or $q$ is odd while the other is even, we present some results which do not appear in the above mentioned paper. For example, one of the  interesting results is that there may be  triples $(p,q,m)$ such that $H_{pqm}\not=\emptyset$ but $\Omega_{pqm}=\emptyset$, which does not occur in the homogeneous case.
\end{abstract}

\section{Introduction and statement of the main results}

The structural stability of planar vector fields has been a subject of great interest in the global qualitative theory of dynamical systems since the sixties. The first definition of structural stability of planar vector fields goes back to Andronov and Pontrjagin (1937) \cite{andronov} and Peixoto (1962) \cite{peixoto}. There are many paper about this subject. In 1990 \cite{shafer} Shafer characterized the planar gradient polynomial vector fields which are structurally stable with respect to perturbations in the set of all $C^r$ planar vector fields and in the set of all planar polynomial vector fields. In 1993, Jarque and Llibre \cite {JL} found the similar characterisation to planar Hamiltonian polynomial vector field with respect to perturbations in the same sets of Shafer. In 1996 Llibre, Perez and Rodriguez \cite{LRR} characterized the structural stable homogeneous vector fields with respect to perturbation in the same restricted set. In 2000 the same authors \cite{LRR1} extended  most of the results of the previous paper to systems of the form $X=(P_m, Q_n)$, where $P_m$ and $Q_n$ are polynomial functions of degree $m$ and $n$, respectively, $m, n > 1$. In 2005 Jarque, Llibre and Shafer \cite{JLS1} provided sufficient conditions for a planar polynomial foliation to be structurally stable under several different types of perturbation. In 2008 \cite{JLS2}, Jarque, Llibre and Shafer obtained the full characterisation of structural stability of polynomial foliations of degree 1 and 2, both in the Poincar\'e sphere and in the plane, together with a complete catalogue of phase portrait of stable systems.

 However the complete and explicit characterisation of all planar structurally stable polynomial vector fields of degree $m$ with respect to polynomial perturbations, is an open problem. In many papers about this subject we found the study of some families of polynomials vector fields, specially modulo limit cycles, as in \cite{AKL}.

A function $f(x,y)$ is called a {\it $(p,q)$-quasihomogeneous function of degree $m$} if $f(\lambda^p
x,\lambda^q y)=\lambda^m f(x,y)$ for all $\lambda\in \mathbb{R}$. If $P(x,y)$ and $Q(x,y)$ are $(p,q)$-
quasihomogeneous polynomials of degree $p-1+m$ and $q-1+m$, respectively, we say that $X=(P,Q)$ is a {\it planar $(p,q)$-quasihomogeneous polynomial
vector field of degree $m$}. The system of differential equations associated to $X$ is
\begin{equation}\label{eq1}
\frac{dx}{dt}=P(x,y)=\sum_{pi+qj=p-1+m}a_{ij}x^iy^j,\,\,\,\,\,\frac{dy}{dt}=Q(x,y)=\sum_{pi+qj=q-1+m}b_{ij}x^iy^j.
\end{equation}
Here $p, q$ and $m$ are positive integers and $P(x,y)$ and $Q(x,y)$ are coprime in the ring $\mathbb{R}[x,y]$. To be short we denote this by  $(P(x,y),Q(x,y))=1$.

Observe that the above definition is the natural one for the following reasons\cite{CGP}:
\begin{enumerate}
\item When $p=q=1$ it coincides with the usual definition of homogeneous vector
field of degree $m$.
\item The differential equation $dy/dx=Q/P$ associated with $X$, is invariant by the change of variables $\bar{x}=\lambda^p x, \bar{y}=\lambda^q y$.
\item Homogeneous vector fields can be integrated using polar coordinates where\-as $(p, q)$-quasihomogeneous vector fields can be integrated using the $(p, q)$-polar coordinates. These generalized polar coordinates were introduced by Lyapunov in his study of the stability of degenerate critical points\cite{DLA}. In this paper we will describe this change of coordinates and some of their main properties in Section \ref{s2}.
\end{enumerate}

Let $H_{pqm}$ be the set of all planar $(p,q)$-quasihomogeneous vector fields of degree $m$.

It follows from  \eqref{eq1} that  $H_{pqm}\not=\emptyset$ if and only if $pi+qj=p-1+m$ and $pi'+qj'=q-1+m$  have  non-negative integer solutions $(i,j)$ and $(i',j')$ respectively. Therefore if $(p,q,m)=(3,7,2)$, then $H_{372}=\emptyset$; and if $(p,q,m)=(1,2,2)$, then  $H_{122}\not=\emptyset$.

Although there are integers $p,q$ and $m$ such that $H_{pqm}=\emptyset$,  there has been a substantial amount of work devoted to understanding
the properties  of  the quasihomogeneous vector fields. For instance, the quasihomogeneous vector fields  have appeared in several works about the Hilbert's Sixteenth Problem\cite{CGP,LLYZ},  and about integrability of planar vector fields\cite{chavaga}. The $(p, q)$-polar coordinates, used to study the quasihomogeneous vector fields, have also been applied to study properties of planar differential equations\cite{BM,L}.  It is well known that (see \cite{chavaga}), given a polynomial $f \in \mathbb R[x, y]$ we can write  it in the form $f = f_m + f_{m+1} +...+f_{m+n}$, where $f_k$ is a $(p,q)$-quasihomogeneous polynomial function of degree $k$.

Before stating the main results of this paper,  we give some definition and notations.  To study the behaviour of the trajectories of a planar differential system near infinity we use the Poincar\'e-Lyapunov compactification, see for instance \cite{DLA}.  In the Poincar\'e-Lyapunov compactification we prefer to work on a hemisphere, calling it as Poincar\'e-Lyapunov disk.  The induced vector field in the Poincar\'e-Lyapunov disc is called {\it the Poincar\'e-Lyapunov compactification of the vector field $X$}, denoted by $E(X)$.

Roughly speaking, we shall say that two vector fields $X, Y \in H_{pqm}$ are {\it topologically equivalent} if there exists a homeomorphism $h$ in the Poincar\'e-Lyapunov disc, carrying orbits of the flow induced by $E(X)$ onto orbits of the flow induced by  $E(Y)$, preserving sense but not necessarily parametrization; $h$ is termed an equivalence homeomorphism between $X$ and $Y$. Moreover, as in the homogeneous case, the coefficients of $X$, $P$ and $Q$, are polynomials so, there exists a positive integer $k < \infty$, such that every $X =(P,Q) \in H_{pqm}$ could be identified with a unique point in $\mathbb R^{k}$, by the identification of the coefficients of $X$ with points from $\mathbb R^{k}$. The number $k$ could be chosen in the following way: let $k_1$ and $k_2$  be the  numbers of the non-negative integer solutions, in the case they exist, for the equations $pi+qj=p-1+m$ and $pi'+qj'=q-1+m$ respectively. Then $k=k_1+k_2$. We take in $H_{pqm}$ the topology induced by the Euclidean norm in $\mathbb R^{k}$.

Further,  a vector field  $X\in H_{pqm}$ is {\it structurally stable}  with respect to perturbation in $H_{pqm}$ if there exists a neighborhood $U$ of $X$ in $H_{pqm}$ such that for all $Y \in U$, $X$ and $Y$ are topologically equivalent. We also need to observe that this definition of structural stability does not require that the equivalence homeomorphism is near the identity map on the Poincar\'e-Lyapunov sphere. It is important to say that in certain cases,  for open manifolds, the restriction of $h$ to a neighborhood of identity in $H_{pqm}$ is not superfluous (see {\cite{dumortier-shafer, KKN, shafer}), but here it is redundant. This follows from the Peixoto results in \cite{peixoto}, as in the homogeneous case. Since the coefficient topology is equivalent to the $C^1$ topology and the Poincar\'e-Lyapunov sphere is a manifold in the Peixoto conditions we don't need to require that our equivalence homeomorphism be near the identity map.
Peixoto showed in\cite {peixoto} that on an orientable differentiable compact connected 2-manifold without boundary, if a $C^1$ vector field $X$ is equivalent to all vector fields in a neighborhood $U$ of $X$ in the $C^1$ topology, then the equivalence homeomorphism between $X$ and any vector field in $U$ can be chosen sufficiently close to the identity map.

Denoted by  $\Omega_{pqm}$ the set of all vector fields in $H_{pqm}$ which are structurally stable with respect to perturbations in $H_{pqm}$.
In this paper we characterise the vector fields in $\Omega_{pqm}$  and determine the exact number of the topological equivalence classes in $\Omega_{pqm}$. By using this characterisation we  give an extension of The Hartman-Grobman Theorem for a kind of planar vector fields.

We would like to  point out that the results of this paper  are an extension of the results in \cite{LRR} and an improvement of them. For a given  $(p,q)$-quasihomogeneous vector field of degree $m$,   we give an explicit method   to decide whether  it is structurally stable  with respect to perturbation in $H_{pqm}$, before finding the vector field induced by $X$ in the Poincar\'e-Lyapunov sphere, as it has taken place in others papers such as \cite{LRR}, for instance.  Roughly speaking,  if both $p$ and $q$ are odd, the study of quasihomogeneous system are similar to those of  the homogeneous one  in  \cite{LRR};    if either $p$ or $q$ is odd while the other is even,  then we should use different ideas to  study the $(p,q)$-quasihomogeneous vector fields. In fact, we present some  results which do not appear in  \cite{LRR} for the latter case. For example,  one of the   interesting results is that there are triples $(p,q,m)$ such that $H_{pqm}\not=\emptyset$ but $\Omega_{pqm}=\emptyset$, which does not occur in the homogeneous case. We  also note that Proposition \ref{p5} is not necessarily in \cite{LRR}, but it is crucial for our analysis. Lemma \ref{l4.1} will tell us what is the difference between a general quasihomogeneous vector field and a homogeneous one.

\medskip

Now we shall present the main results of this paper.  Let
\begin{equation}\label{eq2}
\eta(x,y)=pxQ(x,y)-qyP(x,y).
\end{equation}

In Lemma \ref{l9} (see Section \ref{s2} below)  it is proved that if $\eta(x,y) \equiv 0$, then $(P,Q)=(px,qy)$. This degenerated case will not be considered in this paper. Moreover we understand that the following convention (a) holds up  without loss of generality. See Lemma \ref{l1} in Section \ref{s2}.
\medskip

\noindent {\bf Convention.} {\it In this paper we understand that
\begin{itemize}
 \item[(a)]$p$ is odd and $(p,q)=1$, unless the opposite is claimed, and
\item[(b)] $\eta(x,y)\not\equiv 0$.
\end{itemize}}

\medskip

In Section \ref{s2} we shall study the phase portraits of the vector fields in $H_{pqm}$ and prove the following theorem.

\begin{theorem}\label{th3}
Let $X\in H_{pqm}\not=\emptyset$. If  $\eta(1,y)$ has no zero and $\eta(0,1)\not=0$, then
$\eta(x,y)\not=0$, for $(x,y)\not=(0,0)$ and, the origin of system (\ref{eq1}) is
\begin{itemize}
\item[(a)] a global center if and only if $I_X=0$, where
\begin{equation}\label{eq3}
I_X=\int_{-\infty}^{+\infty}\frac{P(1,u)}{\eta(1,u)}du.
\end{equation}
\item[(b)] a global stable (respectively, unstable) focus if and only if
$I_X<0$ (respectively, $I_X>0$).
\end{itemize}
\end{theorem}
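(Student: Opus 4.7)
The plan is to build on the formalism of Section \ref{s2}. First I would prove the easy claim that $\eta(x,y)\neq 0$ off the origin. Since $\eta$ is itself $(p,q)$-quasihomogeneous of degree $p+q-1+m$, and $p$ is odd so that $x^{1/p}$ is real for every $x$, for $x\neq 0$ the identity
\[
\eta(x,y)=x^{(p+q-1+m)/p}\,\eta\bigl(1,\,y/x^{q/p}\bigr)
\]
together with the hypothesis that $\eta(1,u)$ has no real zero gives $\eta(x,y)\neq 0$. On $\{x=0\}$ one has $\eta(0,y)=-qy\,P(0,y)$; the only monomials of $P$ that survive $x=0$ are those with $i=0$, giving either $P(0,y)=cy^{(p-1+m)/q}$ or $P(0,y)\equiv 0$. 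The hypothesis $\eta(0,1)\neq 0$ rules out the latter alternative and, since the remaining monomial is nonvanishing for $y\neq 0$, yields $\eta(0,y)\neq 0$ there.

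Next I would transform to the $(p,q)$-polar coordinates $x=r^{p}\mathrm{Cs}(\theta)$, $y=r^{q}\mathrm{Sn}(\theta)$ built in Section \ref{s2}. Using the quasihomogeneity of $P,Q,\eta$ and the defining ODEs of $\mathrm{Cs},\mathrm{Sn}$, a direct computation yields
\[
\dot\theta=\frac{r^{m-1}}{K}\,\eta\bigl(\mathrm{Cs}(\theta),\mathrm{Sn}(\theta)\bigr),\qquad \dot r=r^{m}\,h(\theta),
\]
for a positive constant $K$ and an explicit function $h$ built from $P,Q$ evaluated on the $(p,q)$-circle. Step 1 shows $\eta(\mathrm{Cs},\mathrm{Sn})$ is never zero, so $\dot\theta$ has constant sign; hence every nontrivial trajectory winds monotonically around the origin and a Poincar\'e return map on a ray from $0$ is well defined.

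Eliminating time I obtain $dr/d\theta=r\,Kh(\theta)/\eta(\mathrm{Cs}(\theta),\mathrm{Sn}(\theta))$, which integrates over one period $T$ of $(\mathrm{Cs},\mathrm{Sn})$ to
\[
r(T)=r(0)\,\exp\!\left(\int_{0}^{T}\!\frac{Kh(\theta)}{\eta(\mathrm{Cs}(\theta),\mathrm{Sn}(\theta))}\,d\theta\right).
\]
The quasihomogeneous rescaling $(x,y,t)\mapsto(\lambda^{p}x,\lambda^{q}y,\lambda^{1-m}t)$ leaves the system invariant, so any two orbits are scaled copies of one another; consequently the exponent above is independent of $r(0)$, and all orbits are simultaneously closed, all inward spiralling, or all outward spiralling according only to the sign of that exponent. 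This already produces the global center/stable focus/unstable focus trichotomy and reduces the theorem to identifying the exponent, up to a positive constant, with $I_{X}$.

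The final and most delicate step is the change of variables $u=\mathrm{Sn}(\theta)/\mathrm{Cs}(\theta)^{q/p}$, well defined on each connected component of $\{\mathrm{Cs}\neq 0\}$ because $p$ is odd. Using the quasihomogeneity identities $P(\mathrm{Cs},\mathrm{Sn})=\mathrm{Cs}^{(p-1+m)/p}P(1,u)$ and $\eta(\mathrm{Cs},\mathrm{Sn})=\mathrm{Cs}^{(p+q-1+m)/p}\eta(1,u)$, together with the Jacobian $du/d\theta$ read off from the ODEs for $\mathrm{Cs},\mathrm{Sn}$, the integrand simplifies to a positive multiple of $P(1,u)/\eta(1,u)$, while $\theta$ sweeping each half-period maps onto $u\in(-\infty,+\infty)$ with $u\to\pm\infty$ at the $y$-axis crossings. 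Adding the contributions from $\mathrm{Cs}>0$ and $\mathrm{Cs}<0$ produces a positive multiple of $\int_{-\infty}^{+\infty}P(1,u)/\eta(1,u)\,du=I_{X}$, from which parts (a) and (b) follow. The main obstacle I expect is the sign bookkeeping in this last step: the parities of $q$, of $p-1+m$ and of $p+q-1+m$ control the signs of $\mathrm{Cs}^{q/p}$ and of the scaling factors as $\mathrm{Cs}$ crosses zero, and one must check that the two half-period contributions add, rather than cancel, to give exactly a positive multiple of $I_{X}$.
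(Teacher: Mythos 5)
Your proposal follows essentially the same route as the paper: quasihomogeneity plus the hypothesis on $\eta(0,1)$ to get $\eta\neq 0$ off the origin, the $(p,q)$-polar return map $r\mapsto r e^{\tilde I_X}$, and the substitution $u=\mathrm{Sn}\,\theta/\mathrm{Cs}^{q/p}\theta$ on each half where $\mathrm{Cs}\neq 0$ to identify the exponent with $I_X$. The only caveat is that after the substitution the integrand is $\dfrac{p\,\xi(1,u)}{(p+qu^{2p})\,\eta(1,u)}=\dfrac{P(1,u)}{\eta(1,u)}+\dfrac{u^{2p-1}}{p+qu^{2p}}$ rather than a pointwise positive multiple of $P(1,u)/\eta(1,u)$; the identification with $I_X$ holds only after integration, because the odd correction term integrates to zero over $(-\infty,+\infty)$ --- exactly the ``delicate bookkeeping'' you anticipated, and the paper resolves it by this identity together with $\tilde I_-=-\tilde I_+$.
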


In \cite{LLYZ} $I_X$ is given by the integral of $(p,q)$-trigonometric functions.
Here $I_X$ depends on the integral of the rational function $P(x,y)/\eta(x,y)$.

In Section \ref{s2} it is proved that, if $\eta(1,y)=0$ has a real zero  or $\eta(0,1)=0$  then the system \eqref{eq1} has at least
one invariant curve. Therefore, Theorem \ref{th3} implies that the origin is a center if and only if
$\eta(x,y)\not=0,\,\,I_X=0$, or if and only if $\eta(1,y)\not=0,\,\,\eta(0,1)\not=0,\,\,I_X=0$.

\medskip

The following theorem is proved in Section \ref{s3} and it gives the characterisation of the set $\Omega_{pqm}$.

\begin{theorem}\label{th4}
Denoted by  $\Omega_{pqm}$ the set of  all vector fields in $H_{pqm}$ which are structurally stable with respect to perturbations in $H_{pqm}\not=\emptyset$. If $ \Omega_{pqm}\not=\emptyset$, then the vector field $X \in \Omega_{pqm}$ if and only if one of the following conditions is satisfied:
\begin{itemize}
\item[(a)] If $\eta(1,y)$ has no zero and $\eta(0,1)\not= 0$ then $I_X\not=0$;
\item[(b)] If the condition in (a) does not hold, then  all the zeros of $\eta(1,y)$ are simple if they exist, and $\partial\eta(0,1)/\partial x\not=0$, if $\eta(0,1)=0$.
\end{itemize}
\end{theorem}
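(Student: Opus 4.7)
My plan is to work in the $(p,q)$-polar coordinates described in Section~\ref{s2}, where the quasihomogeneous nature of $X$ forces the angular equation at infinity to be governed precisely by $\eta$. In these coordinates the equator of the Poincar\'e--Lyapunov disc is cut into arcs by the zeros of $\eta$, each such zero giving a singular point of $E(X)$ on the equator, and the finite dynamics decouples into a radial flow and an angular flow. Just as in Peixoto's characterisation on compact surfaces, structural stability of $X$ in $H_{pqm}$ will be equivalent to (i) hyperbolicity of every singularity of $E(X)$ on the closed disc and (ii) absence of non-trivial recurrent sets other than a hyperbolic focus at the origin (so centers are excluded). The whole proof reduces to translating (i)--(ii) into the algebraic statements (a) and (b).

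For sufficiency I would first treat (a). Here $\eta$ has no zero on $\mathbb{R}^2\setminus\{0\}$, so $E(X)$ has no equilibrium on the equator; the monotone angular motion makes the equator a normally hyperbolic periodic orbit, and Theorem~\ref{th3} identifies the origin as a hyperbolic global focus whenever $I_X\neq 0$. The resulting sink/source pair on the disc is stable under $C^1$-small perturbations, hence a fortiori under perturbations inside $H_{pqm}$. Under (b), a simple zero $y_0$ of $\eta(1,y)$ yields an equilibrium of $E(X)$ on the equator whose linearisation in the $x=1$ chart has one eigenvalue tangent to the equator equal to a nonzero multiple of $\partial_y\eta(1,y_0)$ and one transverse eigenvalue produced by the radial blow-up; both are nonzero, so the equilibrium is a hyperbolic saddle or node. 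The condition $\partial_x\eta(0,1)\neq 0$ plays the analogous role in the $y=1$ chart, which must be treated separately because the weights $p\neq q$ break the symmetry between the two axes. Combining hyperbolicity at infinity with the finite-part analysis (in case (b) the existence of an invariant curve sketched in Section~\ref{s2} rules out a center at the origin), a Peixoto-style argument on the compact disc yields structural stability in $H_{pqm}$.

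For necessity I would contrapose. If (a) holds but $I_X=0$, Theorem~\ref{th3} gives a center at the origin, and I would produce $X_\varepsilon\in H_{pqm}$ by adding a suitable quasihomogeneous monomial to $P$ so that $I_{X_\varepsilon}$ depends non-trivially on $\varepsilon$; the topological type at the origin then changes for arbitrarily small $\varepsilon$, contradicting stability. If $\eta(1,y)$ has a multiple real zero, a one-parameter perturbation inside $H_{pqm}$ either collapses or splits this zero, thereby changing the number of singularities of $E(X)$ on the equator; since this integer is a topological invariant, $X$ cannot be structurally stable. The subcase $\partial_x\eta(0,1)=0$ with $\eta(0,1)=0$ is handled analogously on the $y=1$ chart.

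The main obstacle I anticipate is twofold. First, I must verify that perturbations \emph{within} the finite-dimensional family $H_{pqm}$ are rich enough to realise each of the bifurcations above: this requires tracking which coefficients $a_{ij},b_{ij}$ of $P$ and $Q$ control $I_X$, $\eta(1,y_0)$ and $\partial_x\eta(0,1)$, and establishing their independence as functionals on $H_{pqm}$. Second, the two charts at infinity are genuinely asymmetric when $p\neq q$, so the $y$-axis direction must be analysed on its own; this is precisely where Proposition~\ref{p5} and Lemma~\ref{l4.1} (advertised in the introduction as ingredients absent from \cite{LRR}) should intervene, supplying the extra condition $\partial_x\eta(0,1)\neq 0$ of (b) that is invisible in the homogeneous case.
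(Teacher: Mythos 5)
Your skeleton matches the paper's: case (a) is handled by Theorem \ref{th3} together with the openness of the condition $I_X\neq 0$; case (b) is handled by showing that the simple zeros of $\eta(1,\cdot)$ (and of $\eta(\cdot,1)$ at $x=0$) control the equilibria produced by the blow-ups at infinity and at the origin; necessity is obtained by perturbing inside $H_{pqm}$ to change $I_X$ or to split/merge a multiple zero. Two points in your plan, however, are genuine gaps rather than routine verifications.

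First, you assert that at an equator equilibrium ``both [eigenvalues] are nonzero, so the equilibrium is a hyperbolic saddle or node.'' The transverse eigenvalue in the $x$-chart is $-P(1,\lambda)$ (see \eqref{eq23}), and its nonvanishing is not automatic: it is exactly here that the standing hypothesis $(P,Q)=1$ enters, via B\'ezout's theorem (if $P(1,\lambda)=0$ and $\eta(1,\lambda)=0$ then $Q(1,\lambda)=0$, so $P=0$ and $Q=0$ would share the whole curve $y^p=\lambda^px^q$). The same issue arises in the $y$-chart, where the transverse eigenvalue is $\mp Q(0,\pm 1)$. Without this argument the hyperbolicity claim, and hence all of case (b), is unsupported. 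Second, your opening reduction --- that structural stability in $H_{pqm}$ is equivalent to hyperbolicity of all singularities of $E(X)$ plus absence of nontrivial recurrence --- is false as stated for case (b): there the origin is a degenerate singular point of $E(X)$ (it only becomes a circle of hyperbolic points after blow-up), and the invariant curves $y^p=\lambda^p x^q$ are separatrix connections between the origin and infinity, so $E(X)$ violates the Peixoto conditions on the disc. The paper's actual mechanism is different: it shows (Lemma \ref{l11} and Proposition \ref{p13}) that the entire phase portrait is determined by the finite list of signs $\mathrm{sgn}\bigl(P(1,\lambda_i)\,\partial\eta(1,\lambda_i)/\partial u\bigr)$ (and $\mathrm{sgn}\bigl(Q(0,1)\,\partial\eta(0,1)/\partial x\bigr)$ when relevant), and that under condition (b) this combinatorial data is locally constant on $H_{pqm}$; equivalence then follows without any appeal to Peixoto's theorem for $E(X)$ itself. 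Finally, a minor misattribution: Proposition \ref{p5} and Lemma \ref{l4.1} play no role in this proof (they serve the later count of equivalence classes); the $y$-chart condition $\partial\eta(0,1)/\partial x\neq 0$ comes from Lemma \ref{l11}(b).
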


In order to compute the number of topological equivalence classes in $\Omega_{pqm}$ it is necessary to have some additional information about the normal form of the structural stable vector field $X=(P,Q)$. They are given by the following proposition.

\begin{proposition}\label{p5}
Let $X =(P,Q) \in H_{pqm}\not=\emptyset$ and $\eta(x,y)$  is given by \eqref{eq2}. Then  $\eta(x,y)$ is a $(p,q)$-
quasihomogeneous polynomial of degree $p+q+m-1$ having the form
\begin{equation}\label{eq4}
\eta(x,y)=\sum_{pi+qj=p+q+m-1}c_{ij}x^iy^j.
\end{equation}
Suppose  $X \in \Omega_{pqm}\not=\emptyset$, then there exist a unique integer $r$ such that
\begin{itemize}
\item[(a)] if  $\eta(0,1)\not=0,\,\eta(1,0)\not=0$ then
$p+q+m-1=(r+1)pq$ with $r\geq 0$ and
\begin{equation}\label{eq5}\begin{array}{ccl}
\eta(x,y)&=&\displaystyle {\sum_{l=0}^{r+1}c_{lq,(r+1-l)p}(x^q)^l(y^p)^{r+1-l}},\\[2ex]
P(x,y)&=&\displaystyle {\sum_{l=0}^ra_{lq,(r+1-l)p-1}x^{lq}y^{(r+1-l)p-1}},\\[2ex]
 Q(x,y)&=&\displaystyle {\sum_{l=0}^rb_{(r+1-l)q-1,lp}x^{(r+1-l)q-1}y^{lp}},\end{array}
\end{equation}
with $a_{0,(r+1)p-1}\not=0,\,\,b_{(r+1)q-1,0}\not=0,\,\,c_{0,(r+1)p}\not=0,\,\,c_{(r+1)q,0}\not=0$;
\vspace*{.4cm}

\item[(b)] if $\eta(0,1)\not=0,\,\eta(1,0)=0$, then $p+m-1=rpq$ with $r\geq 0$ and
\begin{equation}\label{eq6}\begin{array}{ccl}
\eta(x,y)&=&\displaystyle {\left(\sum_{l=0}^rc_{lq,(r-l)p+1}(x^q)^l(y^p)^{r-l}\right)y},\\[2ex]
P(x,y)&=&\displaystyle {\sum_{l=0}^ra_{lq,(r-l)p}(x^q)^l(y^p)^{r-l}},\\[2ex]
 Q(x,y)&=&\displaystyle {\sum_{l=0}^rb_{(r-l)q-1,lp+1}x^{(r-l)q-1}y^{lp+1}},\end{array}
\end{equation}
with $a_{0,rp}\not=0,\,\,b_{rq-1,1}\not=0,\,\,c_{0,rp+1}\not=0,\,\,c_{rq,1}\not=0$;
\vspace*{.4cm}

\item[(c)]if $\eta(0,1)=0,\,\eta(1,0)\not=0$, then $q+m-1=rpq$  with $r\geq 0$ and
\begin{equation}\label{eq7} \begin{array}{ccl}
\eta(x,y)&=&\displaystyle{x\left(\sum_{l=0}^rc_{1+lq,(r-l)p}(x^q)^l(y^p)^{r-l}\right)},\\[2ex]
P(x,y)&=&\displaystyle{x\left(\sum_{l=0}^{r-1}a_{1+lq,(r-l)p-1}x^{lq}y^{(r-l)p-1}\right)},\\[2ex]
Q(x,y)&=&\displaystyle{\sum_{l=0}^rb_{(r-l)q,lp}(x^q)^{r-l}(y^p)^l},\end{array}
\end{equation}
with $a_{1,rp-1}\not=0,\,\,b_{rq,0}\not=0,\,\,c_{1,rp}\not=0,\,\,
c_{1+rq,0}\not=0$;
\vspace*{.4cm}

\item[(d)]if $\eta(0,1)=0,\,\eta(1,0)=0$, then $m-1=(r-1)pq$ with $r\geq 1$ and
\begin{equation}\label{eq8} \begin{array}{ccl}
\eta(x,y)&=&\displaystyle{xy\left(\sum_{l=0}^{r-1}c_{1+lq,(r-1-l)p+1}(x^q)^l(y^p)^{r-1-l}\right)},\\[2ex]
P(x,y)&=&\displaystyle{x\left(\sum_{l=0}^{r-1}a_{1+lq,(r-1-l)p}(x^q)^l(y^p)^{r-1-l}\right)},\\[2ex]
Q(x,y)&=&\displaystyle{y \left(\sum_{l=0}^{r-1}b_{(r-1-l)q,1+lq}(x^q)^{r-1-l}(y^p)^l\right)},\end{array}
\end{equation}
with $a_{1,(r-1)p}\not=0,\,\,b_{(r-1)q,1}\not=0\,\,c_{1,(r-1)p}\not=0,\,\,
c_{1+(r-1)q,1}\not=0$.
\end{itemize}
\end{proposition}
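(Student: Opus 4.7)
The plan is to first establish the (easy) quasihomogeneity of $\eta$, then carry out a four-case analysis on the values of $\eta(0,1)$ and $\eta(1,0)$, using $\gcd(p,q)=1$ together with the structural-stability characterisation of Theorem \ref{th4} to pin down the degree relation and the explicit expansion in each case.

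First, since $P$ is $(p,q)$-quasihomogeneous of degree $p-1+m$ and $Q$ of degree $q-1+m$, the combination $\eta=pxQ-qyP$ is immediately quasihomogeneous of degree $p+q+m-1$, giving (\ref{eq4}). The two identities $\eta(0,y)=-qy\,P(0,y)$ and $\eta(x,0)=px\,Q(x,0)$ show that $\eta(0,1)\neq 0\iff x\nmid P$ and $\eta(1,0)\neq 0\iff y\nmid Q$, so the four cases in the statement correspond exactly to the four possibilities for divisibility of $P$ by $x$ and of $Q$ by $y$. In case (a), structural stability is not needed: the presence of pure-$y$ and pure-$x$ monomials in $\eta$ forces $q\mid p+m-1$ and $p\mid q+m-1$, which together with $\gcd(p,q)=1$ yield $p+q+m-1=(r+1)pq$ for some $r\geq 0$; solving $pi+qj=(r+1)pq$ in nonnegative integers forces $q\mid i$ and $p\mid j$, producing (\ref{eq5}), and the analogous enumerations give the expansions of $P$ and $Q$. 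The four nonvanishings in case (a) reduce directly to the case hypothesis via the identifications $c_{0,(r+1)p}=-q\,a_{0,(r+1)p-1}$ and $c_{(r+1)q,0}=p\,b_{(r+1)q-1,0}$, obtained by matching coefficients in $pxQ-qyP$.

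For cases (b)--(d), Theorem \ref{th4}(b) is essential: every zero of $\eta(1,y)$ must be simple, and $\partial\eta(0,1)/\partial x\neq 0$ whenever $\eta(0,1)=0$. When $y\mid\eta$ (cases (b) and (d)), simplicity of $y=0$ as a zero of $\eta(1,y)$ means the coefficient of $y^1$ is nonzero; for that monomial to exist at all, the exponent $(p+m-1)/p$ of $x$ must be a nonnegative integer, forcing $p\mid m-1$. Combined with the remaining axial divisibility and $\gcd(p,q)=1$, this upgrades the degree relation to $p+m-1=rpq$ in case (b). Case (c) is handled symmetrically using $\partial\eta(0,1)/\partial x\neq 0$, which requires the $xy^{\ast}$-monomial of $\eta$ to exist and be nonzero, forcing $q\mid m-1$ and hence $q+m-1=rpq$. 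Case (d) combines both, giving $pq\mid m-1$, i.e.\ $m-1=(r-1)pq$. The explicit forms (\ref{eq6})--(\ref{eq8}) then follow by the same enumeration of allowed monomials of the correct degree subject to the axial divisibility, and the four listed nonvanishings in each case come from the case hypothesis, the structural-stability conditions (via the coefficient identifications in $pxQ-qyP$), and the coprimality $\gcd(P,Q)=1$ (which rules out, for instance, $y\mid P$ in case (b) since $y\mid Q$).

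The main obstacle will be the bookkeeping in case (d), where $xy\mid\eta$, $x\mid P$, $y\mid Q$, and both the simple-zero and $\partial\eta/\partial x$-conditions are active at once. Setting up the summation ranges so that boundary terms (where an index would become $-1$) are automatically suppressed, and checking that exactly the four listed coefficients are forced nonzero by the combined hypotheses, requires careful analysis of the Diophantine equation $pi+qj=\,$const\ under simultaneous positivity constraints on $i-1$ and $j-1$.
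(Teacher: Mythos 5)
Your proposal is correct and follows essentially the same route as the paper's own proof: translate the case hypotheses on $\eta(0,1)$ and $\eta(1,0)$ into the existence of axial solutions of the Diophantine equation $pi+qj=p+q+m-1$, combine the resulting divisibilities with $\gcd(p,q)=1$ to obtain the degree relations, enumerate the nonnegative integer solutions to get the explicit expansions, and invoke Theorem \ref{th4} for the simple-zero conditions in cases (b)--(d). The only remark worth making is that both you and the paper are equally terse about the nonvanishing of the $b$-coefficients (e.g.\ $b_{rq-1,1}\not=0$ in case (b)), which does not follow from $c_{rq,1}=pb_{rq-1,1}-qa_{rq,0}\not=0$ alone; so this is not a defect of your plan relative to the paper's argument.
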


\bigskip

Proposition \ref{p5} says that, if $X \in \Omega_{pqm}$ then there is a non-negative integer $r$ such that $r$  satisfies at least one of the
following equations:
\begin{equation}\label{eq9}
\begin{array}{lll}
\mathcal{E}_1: p+q+m-1=(r+1)pq, & & \mathcal{E}_2: p+m-1=rpq,\\
\mathcal{E}_3: q+m-1=rpq, & & \mathcal{E}_4: m-1=(r-1)pq.
\end{array}
\end{equation}

An interesting  consequence of the above  proposition  is the following: there are $p,q$ and $m$ such that $H_{pqm}\not=\emptyset$ but $\Omega_{pqm}=\emptyset$. This  is  proved by  choosing the triple  $(p,q,m)$ such that there is no $r$ satisfying any equation  in \eqref{eq9}.

\medskip
\noindent {\bf Example.} Let $(p,q,m)=(1,7,2)$ and consider $H_{172}$. Since $X=(ax^2,bx^8+cxy)\in H_{172}$ for each $(a,b,c)\in \mathbb{R}^3$, we have $H_{172}\not=\emptyset$. It is easy to check that there is no $r$ satisfying any equation  in \eqref{eq9} then $\Omega_{172}=\emptyset$.
\medskip

Denote by
\begin{equation}\label{eq10}
\Theta_i=\{(p,q,m): \mbox{ there exists } r \mbox{ satisfying } \mathcal{E}_i \}.
\end{equation}

We shall classify $\Omega_{pqm}$ by $\Theta_i,\,\,i=1,2,3,4$, and  study the number of equivalence classes in $\Omega_{pqm}$.  The following lemma shows what  happens if $(p,q,m)\in  \Theta_i\cap \Theta_j,\,j\not=i$. It also tells us that if $p=q=1$ (the homogeneous case), then $(1,1,m)$ satisfies all equations in \eqref{eq9}.

\begin{lemma}\label{l4.1} Let $r_i$ be solution of $\mathcal{E}_i$.  If $(p,q,m) \in \Theta_i \cap \Theta_j,\,\,i\not=j$, then $r_i=r_j$. More precisely,
\begin{itemize}
\item[(a)]  $p=q=1$ if one of the following condition holds:
\begin{itemize}
\item[(a.1)] $(p,q,m)\in \Theta_1 \cap \Theta_4$;
\item[(a.2)] $(p,q,m)\in \Theta_2 \cap \Theta_3$;
\item[(a.3)] $(p,q,m)$ satisfies three equations of \eqref{eq9};
\end{itemize}
\item[(b)] $p=1$  if one of the following two  condition holds:
\begin{itemize}
\item[(b.1)] $(p,q,m)\in \Theta_1 \cap \Theta_2$;
\item[(b.2)] $(p,q,m)\in \Theta_3 \cap \Theta_4$;
\end{itemize}
\item[(c)] $q=1$  if one of the following two  condition holds:
\begin{itemize}
\item[(b.1)] $(p,q,m)\in \Theta_1 \cap \Theta_3$;
\item[(b.2)] $(p,q,m)\in \Theta_2\cap \Theta_4$.
\end{itemize}
\end{itemize}
If $p=1,\,\,(1,q,m)\in \Theta_1 \cap \Theta_2 $(resp. $\Theta_3 \cap \Theta_4$), $X=(P(x,y),Q(x,y))\in \Omega_{1qm}\not=\emptyset$, then  $X$ with \eqref{eq6} (resp. \eqref{eq8}) can be changed into system \eqref{eq1}  with \eqref{eq5} (resp. \eqref{eq7}) by the $(1,q)$-quasihomogeneous polynomial transformation  $y\rightarrow y-\lambda x^q$, $0\not=\lambda\in \mathbb{R}$.
\end{lemma}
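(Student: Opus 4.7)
The plan is to subtract, for each pair $\{i,j\} \subset \{1,2,3,4\}$, the two defining equations of \eqref{eq9} and reduce to an elementary Diophantine relation in $p$, $q$, $r_i$, $r_j$ that is then tightened via $(p,q)=1$ and the fact that the coefficient on the right must be a positive integer. For $(p,q,m)\in \Theta_1\cap\Theta_4$, subtraction yields $p+q=(r_1-r_4+2)pq$; since $2\le p+q\le 2pq$, the multiplier on the right is $1$ or $2$, and $p+q=pq$ gives $(p-1)(q-1)=1$ (excluded by $(p,q)=1$), so we must have $p+q=2pq$ and hence $p=q=1$, $r_1=r_4$. For $\Theta_2\cap\Theta_3$, subtraction yields $q-p=(r_3-r_2)pq$; since $|q-p|<pq$ unless $p=q$, coprimality forces $p=q=1$ and $r_2=r_3$. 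For $\Theta_1\cap\Theta_2$, subtraction gives $q=(r_1-r_2+1)pq$, so $(r_1-r_2+1)p=1$, forcing $p=1$ and $r_1=r_2$; the remaining pairs $\Theta_3\cap\Theta_4$, $\Theta_1\cap\Theta_3$, $\Theta_2\cap\Theta_4$ are identical up to interchanging $(p,q)$ and relabelling the equations. Part (a.3) is then immediate, because any three of the four equations contain either the pair $\{1,4\}$ or the pair $\{2,3\}$.

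For the last assertion, I treat the case $p=1$, $(1,q,m)\in \Theta_1\cap\Theta_2$ (so that $m=rq$); the case $\Theta_3\cap\Theta_4$ is symmetric. The substitution $\bar x=x$, $\bar y=y-\lambda x^q$ is $(1,q)$-quasihomogeneous, hence preserves the ambient quasihomogeneous structure. A direct computation using $p=1$ in $\eta=xQ-qyP$ shows that the induced field $\bar X=(\bar P,\bar Q)$ satisfies
\begin{equation*}
\bar\eta(\bar x,\bar y)=\bar x\bar Q-q\bar y\bar P=\eta(\bar x,\bar y+\lambda\bar x^q).
\end{equation*}
Using the $(1,q)$-quasihomogeneity of $\eta$ of degree $(r+1)q$, one obtains
\begin{equation*}
\bar\eta(\bar x,0)=\eta(1,\lambda)\,\bar x^{(r+1)q},\qquad \bar\eta(0,\bar y)=c_{0,r+1}\,\bar y^{r+1}.
\end{equation*}
Since $\eta(1,y)$ is a polynomial of degree $r+1$ in $y$, it has at most $r+1$ real zeros, and I pick any nonzero $\lambda$ outside this finite set. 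Then $\bar\eta(1,0)=\eta(1,\lambda)\neq0$ and $\bar\eta(0,1)=c_{0,r+1}\neq0$ (the latter by the hypothesis in \eqref{eq6}), so $\bar\eta$ has neither $\bar x$ nor $\bar y$ as a factor and $\bar X$ fits the shape \eqref{eq5}.

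The main technical obstacle I anticipate is the bookkeeping verification that the four leading coefficient conditions of \eqref{eq5} are preserved. The coefficients at $\bar x=0$, namely $a_{0,r}$ for $\bar P$ and $c_{0,r+1}$ for $\bar\eta$, are inherited unchanged from \eqref{eq6} because $\bar x=0$ corresponds to $x=0$, $y=\bar y$. The coefficients at $\bar y=0$, namely $b_{(r+1)q-1,0}$ for $\bar Q$ and $c_{(r+1)q,0}$ for $\bar\eta$, follow from the identity $\bar\eta=\bar x\bar Q-q\bar y\bar P$ together with the computation of $\bar\eta(\bar x,0)$ above, which gives $\bar Q(\bar x,0)=\eta(1,\lambda)\,\bar x^{(r+1)q-1}$. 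The case $\Theta_3\cap\Theta_4$ runs on the same template: here $\eta$ of the form \eqref{eq8} has the factor $xy$, and the same substitution absorbs the $y$-factor into the generic quasihomogeneous curve, leaving only the factor $\bar x$ and producing the normal form \eqref{eq7}.
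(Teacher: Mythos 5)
Your proof is correct and follows essentially the same route as the paper: the first part is obtained, exactly as the authors do, by eliminating $m$ between the relevant pair of equations in \eqref{eq9} and finishing with an elementary divisibility/size argument under $(p,q)=1$ (your bound on the multiplier is interchangeable with their observation that $p\mid q$). For the second part the paper only says ``proved by direct computations,'' and your computation --- in particular the identity $\bar\eta(\bar x,\bar y)=\eta(\bar x,\bar y+\lambda\bar x^q)$ and the choice of $\lambda$ off the zero set of $\eta(1,\cdot)$ --- is a correct instance of exactly that computation.
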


It follows from the above lemma that for given $p,\,q$ and $m$, if there exists the number $r$, then  it is unique and  can not take different values, even if $(p,q,m,r)$ satisfies two or more equations of \eqref{eq9}. It also says that there exists a triple $(p,q,m)$ such that $(p,q,m) \in \Theta_i $ but  $(p,q,m) \not\in \Theta_j, \,\, j \not=i$.

\medskip

Based on Lemma \ref{l4.1}, without loss of generality, we assume that the following conventions hold, provided  $X\in \Omega_{pqm}\not=\emptyset$:
\begin{itemize}
\item[(i)]  $p=1$, if $p=1$ or $q=1$;
\item[(ii)]  In order to compute the number of topological equivalence classes in $\Omega_{pqm}$, we consider the case   $(1,q,m)\in \Theta_1$ (resp. $\Theta_3$)  if $(1,q,m)\in \Theta_1 \cap \Theta_2$ (resp. $(1,q,m)\in \Theta_3 \cap \Theta_4$).
\end{itemize}

Denote by $C_{pqm}$  the number of topological equivalence classes in $\Omega_{pqm}$. The following theorem gives us the value of $C_{pqm}$ for each $p, q$ and $m$.

\begin{theorem}\label{th6}
Let  $r$  be an integer, defined in  Proposition \ref{p5}, and    $\Omega_{pqm}\not=\emptyset$. Assume that the above conventions (i) and (ii) hold.
\begin{itemize}
\item[(a)]Suppose that both $p$ and $q$ are odd.
\begin{itemize}
\item[(a.1)] If $r$ is odd, then
\begin{equation*}
C_{pqm}=\left\{\begin{array}{ll} 1+\dfrac{1}{2}\displaystyle{\sum_{j=1}^{(r+1)/2}\sum_{n|j}({\mathcal P}_{2n}+I_{2n})},&\mathrm{if\,\,}\displaystyle{(p,q,m)\in\Theta_1\backslash  \bigcup_{i=2}^4}\Theta_i,\\[2ex]
\displaystyle{-1+\dfrac{1}{2}\sum_{j=1}^{(r+1)/2}\sum_{n|j}({\mathcal P}_{2n}+I_{2n})},&\mathrm{if\,\,}\displaystyle{(p,q,m)\in\bigcup_{i=2}^4\Theta_i},
\end{array}\right.
\end{equation*}
where
\begin{equation*}
\mathcal{P}_{2n}=\frac{1}{n}\left(2^{2n}-\sum_{l|n,l\not=n}l\mathcal{
P}_{2l}\right),\,\,I_{2n}=2^{n+1}-\sum_{l|n,l\not=
n}I_{2l},
\end{equation*}

\item[(a.2)] If $r$ is even, then
\begin{equation*}
C_{pqm}=-1+\dfrac{1}{2}\sum_{j=1}^{r/2}\sum_{n|2j+1}({\mathcal P}_{2n}+I_{2n}),
\end{equation*}
where \begin{equation}\label{eq11}
\mathcal{P}_{2n}=\frac{1}{n}\left(2^n-\sum_{l|n,l\not=n}l\mathcal{
P}_{2l}\right),\,\,I_{2n}=2^{(n+1)/2}-\sum_{l|n,l\not=
n}I_{2l},
\end{equation}
\end{itemize}

\item[(b)] Suppose that $p$ is odd and $q$ are even and ${\mathcal P}_{2n}$ is given by \eqref{eq11}.
\begin{itemize}
\item[(b.1)] If  $\displaystyle{(p,q,m)\in\Theta_1\backslash\bigcup_{i=2}^4\Theta_i}$, then
\begin{equation*}
C_{pqm}=\left\{\begin{array}{ll} \displaystyle{\dfrac{1}{2}\left(r+3+\sum_{j=1}^{(r+1)/2}\sum_{n|2j}{\mathcal P}_{2n}\right)},&\mathrm{if\,\,}r\mathrm{\,\,is\,\, odd},\\[2.5ex]
\displaystyle{\dfrac{1}{2}\left(r-2+\sum_{j=1}^{r/2}\sum_{n|2j+1}{\mathcal P}_{2n}\right)},&\mathrm{if\,\,}r\mathrm{\,\,is \,\,even},
\end{array}\right. .
\end{equation*}
\item[(b.2)] If $(p,q,m) \in \Theta_2$, then
\begin{equation*}
C_{pqm}=\left\{\begin{array}{ll} \displaystyle{\dfrac{1}{2}\left(r-1+\sum_{j=1}^{(r+1)/2}\sum_{n|2j}{\mathcal P}_{2n}\right)},&\mathrm{if\,\,}r\mathrm{\,\,is\,\, odd},\\[2ex]
\displaystyle{\dfrac{1}{2}\left(r-2+\sum_{j=1}^{r/2}\sum_{n|2j+1}{\mathcal P}_{2n}\right)},&\mathrm{if\,\,}r\mathrm{\,\,is \,\,even},
\end{array}\right. .
\end{equation*}

\item[(b.3)] If $(p,q,m) \in \Theta_3\cup \Theta_4$, then
\begin{equation*}
C_{pqm}=\left\{\begin{array}{ll}\displaystyle{r+ \dfrac{1}{2}\left(\sum_{j=1}^{(r+1)/2}\sum_{n|j}{\mathcal P}_{2n}\right)},&\mathrm{if\,\,}r\mathrm{\,\,is\,\, odd},\\[2ex]
\displaystyle{\dfrac{1}{2}\left(r-2+\sum_{j=1}^{r/2}\sum_{n|2j+1}{\mathcal P}_{2n}\right)},&\mathrm{if\,\,}r\mathrm{\,\,is \,\,even},
\end{array}\right. .
\end{equation*}
\end{itemize}
\end{itemize}
\end{theorem}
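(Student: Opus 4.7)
My plan is to reduce topological classification in $\Omega_{pqm}$ to a counting problem for cyclic words under a natural group action, and then to evaluate the count with Burnside's lemma, which produces exactly the quantities $\mathcal{P}_{2n}$ and $I_{2n}$ appearing in the statement. The strategy parallels the one used in \cite{LRR} for the homogeneous case, but the combinatorial model must be adapted to reflect new phenomena that appear when $p$ or $q$ is even and when $(p,q,m)$ lies in only some of the $\Theta_i$.

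First, combining Proposition \ref{p5} with Theorem \ref{th4}, I would show that the phase portrait of the Poincar\'e-Lyapunov compactification $E(X)$ is determined by a cyclic sequence of combinatorial labels attached to the (simple) zeros of $\eta$ on the unit $(p,q)$-circle. In the $(p,q)$-polar coordinates from Section \ref{s2}, those zeros are precisely the critical points of the angular equation, and Theorem \ref{th4} makes them all hyperbolic; between consecutive zeros the angular flow is monotone, while at each zero the local sector is encoded by a pair in $\{+,-\}^{2}$ recording (i) the sign of $P$ along the radial ray and (ii) the hyperbolic/parabolic type of the sector. The total length of the sequence is $r+1$ in case (a) of Proposition \ref{p5}, and $r,\,r,\,r-1$ in cases (b), (c), (d) respectively, plus the forced labels from the axial rays. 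A Markus--Neumann--Peixoto-type argument then identifies this cyclic word with the full topological type of $E(X)$.

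Second, I would identify the group acting on this cyclic data. The quasihomogeneous scaling $(x,y)\mapsto(\lambda^{p}x,\lambda^{q}y)$ forces a cyclic $\mathbb{Z}_N$ bead-rotation action where $N$ is the number of labels above. If both $p$ and $q$ are odd, the involution $(x,y)\mapsto(-x,-y)$ preserves or reverses the flow (depending on the parity of $m$), and composing it with the rotation gives the dihedral group $D_N$; this accounts for the reflection term $I_{2n}$ that appears in case (a). When exactly one of $p,q$ is even, such a global sign reversal is not admissible in $H_{pqm}$ modulo equivalence, so only $\mathbb{Z}_N$ acts, and only $\mathcal{P}_{2n}$ appears in case (b). Burnside's lemma applied to $D_N$ or $\mathbb{Z}_N$ then expresses $C_{pqm}$ as a divisor sum of the form $\sum_{n\mid j}\mathcal{P}_{2n}$ (respectively $\sum_{n\mid j}(\mathcal{P}_{2n}+I_{2n})$), where the alphabet has size $4$ when both labels are free and size $2$ when one label is forced by the normal form; M\"obius inversion of these sums yields exactly the recursions for $\mathcal{P}_{2n}$ and $I_{2n}$ given in (a.1), (a.2) and (b).

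The additive corrections $\pm 1$, $(r+3)/2$, $r$, $-2$, etc., arise from three sources: (i) removal of the \emph{center} portrait $I_X=0$ from Theorem \ref{th3}(a), which is structurally unstable and must be subtracted once; (ii) the fact that in $\Theta_{2},\Theta_{3},\Theta_{4}$ one or two coordinate axes become invariant, freezing a label and reducing the effective alphabet on those beads, which replaces some necklace contributions by a linear-in-$r$ count; and (iii) gluing corrections between the origin and the boundary of the Poincar\'e-Lyapunov disc, since both carry a copy of the same cyclic datum linked by quasihomogeneity. The main obstacle will be proving that the combinatorial datum is a \emph{complete} topological invariant: the forward direction is routine from the Markus--Neumann classification, but the converse requires constructing an equivalence homeomorphism from a matching of cyclic labels, which I would build by first identifying the angular equations via a homeomorphism of $\mathbb{S}^{1}$, extending it radially along the $(p,q)$-polar foliation, and then interpolating across the boundary sphere. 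I would verify the final formulas by specialising to $p=q=1$ (recovering the count of \cite{LRR}) and by computing by hand a small example such as $(p,q,m)=(1,2,m)$, which also serves as a sanity check for the additive constants in each case.
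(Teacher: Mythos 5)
Your overall strategy---encode each $X\in\Omega_{pqm}$ by a cyclic sequence of sign pairs at the simple zeros of $\eta$, and count equivalence classes of such sequences under rotation and reversal via a necklace/bracelet computation---is indeed the route the paper takes (Propositions \ref{p17}--\ref{p28}, following \cite{LRR}). However, there are three concrete gaps that would prevent you from arriving at the stated formulas. First, you fix the length of the cyclic word at $r+1$ (resp.\ $r$, $r$, $r-1$), but the number of real zeros of $\eta(1,\cdot)$ is only bounded by the degree: by Proposition \ref{p15} the vector field has $2k$ infinite singular points for some $k\le r+1$ with $k\equiv r+1 \pmod 2$, and every such $k$ occurs. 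The paper therefore stratifies $\Omega_{pqm}=\bigcup_{k\in J_{m,r}}\Omega_{pqm}^{2k}$ and sums $C_{pqm}=\sum_k C_{pqm}^k$; this summation over $k$ is precisely what produces the double sums $\sum_{j}\sum_{n\mid j}$ in the statement, which a single Burnside computation at fixed length cannot yield. One must also include the stratum $k=0$, which contributes $C_{pqm}^0=2$ (a stable and an unstable global focus) exactly when $0\in J_{m,r}$ and $(p,q,m)\notin\Theta_2\cup\Theta_3\cup\Theta_4$; this, not a ``subtraction of the center,'' is the source of several of the additive constants.

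Second, your explanation of why $I_{2n}$ disappears in case (b) is wrong. The reversal $\Psi$ acts in all cases: Propositions \ref{p20} and \ref{p22} hold regardless of the parity of $q$, so $C_{pqm}^k=(D_{pqm}^k+E_{pqm}^k)/2$ throughout, where $E_{pqm}^k$ counts symmetric cycles. What changes when $q$ is even is that the symmetry constraints of Proposition \ref{p18}(b) (e.g.\ $(\sigma_{2k-i+1},\nu_{2k-i+1})=-(\sigma_i,\nu_i)$) force every symmetric sequence to be $2$-periodic, so $E_{pqm}^k$ degenerates to an explicit constant ($1$, $2$, $3$ or $4$; Proposition \ref{p28}) instead of a divisor sum; summed over $k\in J_{m,r}$ these constants give the linear-in-$r$ terms. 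If only the cyclic group acted, as you claim, the factor $\tfrac12$ in front of $\sum\mathcal{P}_{2n}$ in case (b) and the terms $r+3$, $r-1$, etc.\ could not appear. Third, you have not addressed realizability at the extremal stratum: when $k=r+1$ and the sequence $\{\nu_i\}$ has the maximal number $s=r$ of sign changes, admissibility requires $\sigma_j=\nu_j$ for some $j$ (Proposition \ref{p19}(b)), which removes exactly one $2$-cycle from both $D_{pqm}^{r+1}$ and $E_{pqm}^{r+1}$ and is the actual origin of the $-1$ corrections. Without the stratification over $k$, the correct computation of $E_{pqm}^k$ in case (b), and the admissibility constraint at $k=r+1$, the additive constants in every branch of the theorem would come out wrong.
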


\medskip

Finally we have an extension of the Hartman-Grobman Theorem to vector fields in $H_{pqm}$. Before stating these results we need some notations.
We say that two analytic vector fields $X$ and $Y$ are {\it locally topologically equivalent at origin (resp. infinity)}
if there are two neighborhoods $U$ and $V$ of the origin (resp. the infinity) and a homeomorphism $h: U \rightarrow V$
that carries orbits of the flow induced by $X$ onto orbits of the flow induced by $Y$, preserving sense but not necessarily parametrization (see for instance \cite{LRR}).

  The next two theorems are extensions of the Hartman-Grobman Theorem at the origin and infinity respectively.

\begin{theorem} \label{th7} Let $\displaystyle {X=\sum_{i\geq m}X_i}$, where $X_i=(P_i(x,y),Q_i(x,y))$ is a $(p,q)$-quasi\-ho\-mo\-ge\-neous polynomial vector field of degree $i$ if $H_{pqi}\not=\emptyset$, and $X_i=(0,0)$ if $H_{pqi}=\emptyset$, where $i \in \{m,m+1,\cdots,\}$, $m\geq 1$. Suppose $X_m \in \Omega_{pqm}$, then the phase portrait of $X$ and $X_m$ are locally topologically equivalent at the origin.
\end{theorem}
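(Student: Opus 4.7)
The plan is to blow up the origin by means of the $(p,q)$-polar coordinates introduced in Section~\ref{s2} and to reduce the problem to a structurally stable one-dimensional dynamical system on the exceptional circle. The higher order $(p,q)$-quasihomogeneous terms $X_{m+1}, X_{m+2},\ldots$ will then become genuine higher order perturbations that vanish on the exceptional circle, and the structural stability of $X_m$ at the level of the angular dynamics will propagate the topological equivalence back down to the plane.

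Concretely, I would first write $x=r^{p}\mathrm{Cs}(\theta)$, $y=r^{q}\mathrm{Sn}(\theta)$, where $\mathrm{Cs},\mathrm{Sn}$ are the $(p,q)$-trigonometric functions. Because $X_i$ is $(p,q)$-quasihomogeneous of degree $i$, a direct computation using $(\ref{eq2})$ shows that in these coordinates the full system $X=\sum_{i\geq m}X_i$ takes the form
\begin{align*}
\dot r &= r^{m}\Bigl[\,A_{m}(\theta)+\sum_{i\geq 1}r^{i}A_{m+i}(\theta)\,\Bigr],\\
\dot\theta &= r^{m-1}\Bigl[\,B_{m}(\theta)+\sum_{i\geq 1}r^{i}B_{m+i}(\theta)\,\Bigr],
\end{align*}
where $B_m(\theta)$ is precisely the angular function associated with $\eta$ in \eqref{eq2} evaluated on the $(p,q)$-unit curve. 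Rescaling time by $d\tau=r^{m-1}dt$ (orientation-preserving for $r>0$) removes the common factor and extends the system analytically to $r=0$, producing a vector field $\widetilde X$ on a small cylinder $\{0\leq r<\varepsilon\}\times S^{1}$ whose restriction to $r=0$ coincides with the angular vector field of $X_m$.

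Next I would apply Theorem~\ref{th4}. The hypothesis $X_m\in\Omega_{pqm}$ forces the angular vector field $\dot\theta=B_m(\theta)$ either to be non-vanishing on the exceptional circle (case (a) of Theorem~\ref{th4}, giving a focus or a center; the center subcase is excluded since $I_{X_m}\neq 0$), or to possess only hyperbolic zeros (case (b), corresponding to characteristic directions of $X_m$). In the first subcase the full system $\widetilde X$ inherits, for $r$ small enough, a non-zero rotational component, and the sign of $I_{X_m}$ (Theorem~\ref{th3}) is preserved by the $O(r)$ perturbation, so that both $X$ and $X_m$ have at the origin a focus with the same stability; a topological equivalence is then produced by a radial correspondence along transverse spirals. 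In the second subcase each hyperbolic zero $\theta_0$ of $B_m$ persists, by the implicit function theorem, as a smooth curve of zeros of $B_m(\theta)+\sum r^{i}B_{m+i}(\theta)$, which lifts to a unique invariant characteristic curve of $X$ tangent to the corresponding characteristic direction of $X_m$. Together with the (hyperbolic) radial eigenvalues on the exceptional circle this gives a hyperbolic saddle--node structure on the cylinder, and the equivalence is built sector by sector between consecutive characteristic directions and then glued along them.

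The routine parts are the coordinate change, the persistence of hyperbolic zeros on the exceptional circle, and the sector-by-sector construction of the local homeomorphism (which follows the scheme of \cite{LRR} in the homogeneous case). The main obstacle is the gluing step in the presence of characteristic directions: one must verify that the equivalence constructed in adjacent sectors agrees along the invariant curves and produces a genuine homeomorphism at the origin. This is where the hyperbolicity provided by Theorem~\ref{th4}(b) (simple zeros of $\eta(1,y)$ and $\partial\eta/\partial x(0,1)\neq 0$ when $\eta(0,1)=0$) is essential: it ensures exponential contraction or expansion along the characteristic curves in the blown-up time, so that the sectorial equivalences can be normalized to match continuously on the boundary and then blow down to a homeomorphism at the origin carrying orbits of $X$ onto orbits of $X_m$.
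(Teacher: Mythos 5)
Your proposal follows essentially the same route as the paper: blow up with the $(p,q)$-polar coordinates and the time rescaling $d\tau=r^{m-1}dt$, observe that the restriction to $r=0$ is governed by the angular part of $X_m$, and then split according to Theorem \ref{th4} into the case where the angular component never vanishes (so $r=0$ is a hyperbolic limit cycle of the blown-up system, since $I_{X_m}\neq0$, giving a focus of the same stability for both fields) and the case of simple zeros (so the equilibria on $r=0$ are common to both blown-up systems and hyperbolic, with sectorwise equivalence blown back down). The only small inaccuracy is your phrase about the curve of zeros of $B_m(\theta)+\sum r^iB_{m+i}(\theta)$ ``lifting to an invariant characteristic curve'': that nullcline is not itself invariant; the invariant characteristic orbits are the stable/unstable manifolds of the hyperbolic equilibria on $r=0$, which is exactly what the hyperbolicity you invoke (and the paper's comparison of the two linear parts at $(0,\theta^{*})$) provides.
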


\medskip

Now we can get the analogous of Theorem \ref{th7} at infinity.

\begin{theorem}\label{th8}
Let $\displaystyle {\hat X}=\sum_{i=0}^{m} X_i$,  where $X_i=(P_i(x,y),Q_i(x,y))$ is a $(p,q)$-qua\-siho\-moge\-neous polynomial vector field of degree $i$ if $H_{pqi}\not=\emptyset$, and $X_i=(0,0)$ if $H_{pqi}=\emptyset$, where $i \in \{ 0,1,... ,m\} $. Suppose $X_m \in \Omega_{pqm}$, then the phase portraits
of  $X_m$ and ${\hat X}$ are locally topologically equivalent at infinity. \end{theorem}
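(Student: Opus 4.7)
The plan is to work in the $(p,q)$-Poincar\'e-Lyapunov disc introduced in Section \ref{s2}, in which the circle at infinity becomes the boundary equator, which I shall denote by $S^1_\infty$. I would first express both $E(\hat X)$ and $E(X_m)$ in local charts covering a neighbourhood of $S^1_\infty$, multiplying each by the same positive power of the weighted radial coordinate so as to remove the singularity there (a time reparametrisation that preserves orientation of orbits). A direct computation in these charts shows that, after this common rescaling, $E(\hat X)$ equals $E(X_m)$ plus a perturbation $R$ whose components are polynomials vanishing identically on $S^1_\infty$: each summand $X_i$ with $i<m$ carries, after the rescaling, a strictly positive power of the radial coordinate, which is zero on the equator.

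Next I would invoke Proposition \ref{p5} together with Theorem \ref{th4}: the hypothesis $X_m\in\Omega_{pqm}$ forces every equilibrium of $E(X_m)$ on the equator to be hyperbolic and rules out heteroclinic saddle connections along $S^1_\infty$. The finite set $\Sigma\subset S^1_\infty$ of such equilibria partitions the equator into finitely many regular arcs, on each of which the flow of $E(X_m)$ is conjugate to a straight translation.

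The equivalence between $E(\hat X)$ and $E(X_m)$ on a neighbourhood $U$ of $S^1_\infty$ would be built in three steps. First, since $R$ vanishes identically on $S^1_\infty$, the equator is also invariant under $E(\hat X)$ and the two vector fields restrict to the same flow on $S^1_\infty$, hence share the same equilibrium set $\Sigma$. A check of the first-order Taylor expansion at each $\sigma\in\Sigma$, in each of the four cases (a)--(d) of Proposition \ref{p5}, shows that $R$ contributes nothing to the linearisation, so $E(\hat X)$ and $E(X_m)$ have identical hyperbolic linear parts at every $\sigma$; the classical Hartman-Grobman theorem then furnishes a neighbourhood $U_\sigma$ of $\sigma$ and a local topological equivalence $h_\sigma$. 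Second, on each arc $\alpha\subset S^1_\infty\setminus\Sigma$, the flow-box theorem applied simultaneously to both vector fields on a tubular neighbourhood $U_\alpha$ produces an equivalence $h_\alpha$ matching the endpoint data coming from the $h_\sigma$. Third, these local pieces are glued along common cross-sections into a single homeomorphism $h$ defined on $U=\bigcup_\sigma U_\sigma\cup\bigcup_\alpha U_\alpha$, which is the desired local topological equivalence at infinity.

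The principal obstacle is the first step of the gluing: verifying in each of the four normal forms of Proposition \ref{p5} that $R$ has vanishing linear part at every equilibrium on the equator, and controlling $R$ off the equator well enough that Hartman-Grobman applies on a full two-dimensional neighbourhood rather than only transversally along $S^1_\infty$. This requires a case analysis of the weighted charts covering the four positions distinguished by $\eta(0,1)\neq 0,\ \eta(1,0)\neq 0$ through $\eta(0,1)=0,\ \eta(1,0)=0$, generalising the homogeneous computation carried out in \cite{LRR}.
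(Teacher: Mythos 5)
Your scheme reproduces, for one of the two cases, essentially the paper's argument: pass to the $(p,q)$-polar/weighted coordinates, bring infinity to the invariant circle, rescale time so the lower-degree terms appear with positive powers of the radial coordinate and hence vanish on the equator, and then compare the two systems there (the paper does this via $\rho=1/r$ and $dt/ds=\rho^{m-1}$, obtaining $\rho'=-\rho(F_m(\phi)+\rho F_{m-1}(\phi)+\cdots)$, $\phi'=G_m(\phi)+\rho G_{m-1}(\phi)+\cdots$ against $\rho'=-\rho F_m(\phi)$, $\phi'=G_m(\phi)$). However, there is a genuine gap: you assume that $X_m\in\Omega_{pqm}$ forces the existence of equilibria on the equator and that all of them are hyperbolic, and your whole construction (Hartman--Grobman boxes at the points of $\Sigma$, flow boxes on the complementary arcs, gluing) presupposes $\Sigma\neq\emptyset$. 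But by Theorem \ref{th4}(a) the set $\Omega_{pqm}$ also contains the vector fields for which $\eta_m(1,y)$ has no zeros and $\eta_m(0,1)\neq 0$ with $I_{X_m}\neq 0$ (the global focus case). For these, $\Sigma=\emptyset$ and the equator is a closed orbit of both compactified fields; topological equivalence near a periodic orbit is not a consequence of flow-box arguments, since the perturbed field could a priori produce a band of periodic orbits or extra limit cycles accumulating on the equator. One must argue, as the paper does (Case 1 of the proof of Theorem \ref{th7}, invoked again for Theorem \ref{th8}), that the dominant terms $\rho F_m$ and $G_m$ control the behaviour near $\rho=0$, so that $\rho=0$ is a limit cycle of the same stability type for both systems, the stability being detected by $I_{X_m}\neq 0$ through the return map. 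Your proposal, as written, does not cover this case at all.

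A secondary inaccuracy: even in the case $\Sigma\neq\emptyset$, your claim that the remainder $R$ ``contributes nothing to the linearisation'' at an equilibrium on the equator is false. In the charts above, the term $\rho G_{m-1}(\phi)$ has nonzero $\rho$-derivative at $(0,\phi^*)$, so the linear part of the perturbed system at $(0,\phi^*)$ is
\begin{equation*}
\left(\begin{array}{cc}-F_m(\phi^*)&0\\ G_{m-1}(\phi^*)&G_m'(\phi^*)\end{array}\right),
\end{equation*}
which differs from the diagonal linear part of the unperturbed one by an off-diagonal entry. The argument survives because the matrix is triangular, so the eigenvalues coincide and are nonzero by Theorem \ref{th4}(b) (simple zeros), giving hyperbolic points of the same type; but this is the statement you need to prove, not the identity of the linear parts. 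Also, structural stability in $H_{pqm}$ does not rule out heteroclinic connections along the equator (the equator arcs between consecutive infinite singular points are such connections, present for both fields); fortunately that claim is not needed for the local equivalence at infinity.
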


\medskip

The rest of this paper  is organized as follows. In Section \ref{s2} we study the phase portraits of
$(p,q)$-quasihomogeneous polynomials of degree $m$ on Poincar\'e-Lyapunov disk. In Section \ref{s3}
we prove Theorem \ref{th4} which characterises the vector fields $X\in \Omega_{pqm}$. In Section \ref{s4} we compute
the number $C_{pqm}$ of topological equivalence classes in $\Omega_{pqm}$.  Theorem \ref{th7} and Theorem \ref{th8} are proved in Section \ref{s5}.

\section{Phase portraits of quasihomogeneous vector fields}\label{s2}

In this section we shall study the phase portraits of quasihomogeneous vector fields.

 As discussed before, one of our conventions is that, if $X\in H_{pqm}$ then  $(p,q)=1$. The next lemma shows that, if $(p,q)\geq 2 $, then there exist a triple $(p',q',m')$ with $(p',q')=1$ such that  $X \in H_{p'q'm'}$.

\begin{lemma}\label{l1}
Suppose $(p,q) = k \geq 2$ in \eqref{eq1}, then there exists a unique triple $(p',q',m')$ with $(p',q')=1$ such that  system \eqref{eq1} is a
$(p',q')$-quasihomogeneous vector field of degree $m'$.
\end{lemma}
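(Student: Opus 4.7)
The plan is to take $p'=p/k$ and $q'=q/k$, identify the correct new degree $m'$, and verify the quasihomogeneity monomial by monomial; uniqueness then follows by comparing the exponent supports of $P$ and $Q$.

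First I would observe the divisibility $k\mid(m-1)$. Since $(P,Q)=1$ and $\eta\not\equiv 0$, neither $P$ nor $Q$ is identically zero, so each of the Diophantine equations $pi+qj=p-1+m$ and $pi'+qj'=q-1+m$ admits a non-negative integer solution; combined with $k\mid p$ and $k\mid q$, this forces $k\mid(m-1)$. Hence the positive integer $m':=(m-1)/k+1$ is well defined, and plainly $(p',q')=1$ by construction. For existence, I would substitute $(x,y)\mapsto(\lambda^{p'}x,\lambda^{q'}y)$ and check that each monomial $a_{ij}x^iy^j$ of $P$ scales by
\[
\lambda^{p'i+q'j}=\lambda^{(pi+qj)/k}=\lambda^{(p-1+m)/k}=\lambda^{p'-1+m'},
\]
so $P$ is $(p',q')$-quasihomogeneous of degree $p'-1+m'$; the parallel computation yields that $Q$ is $(p',q')$-quasihomogeneous of degree $q'-1+m'$. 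Therefore $X\in H_{p'q'm'}$.

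For uniqueness, suppose $(p'',q'',m'')$ with $(p'',q'')=1$ is another such triple. Every monomial $x^iy^j$ appearing in $P$ must then satisfy simultaneously $pi+qj=p-1+m$ and $p''i+q''j=p''-1+m''$, and analogously for $Q$. If $P$ (or $Q$) contains two distinct monomials with exponents $(i_1,j_1)\ne(i_2,j_2)$, subtracting the two instances of each degree constraint gives
\[
p(i_1-i_2)+q(j_1-j_2)=0=p''(i_1-i_2)+q''(j_1-j_2),
\]
so $p/q=p''/q''$, and coprimality forces $(p'',q'')=(p',q')$ and then $m''=m'$ from either degree equation. In the residual subcase in which $P$ and $Q$ are both single monomials, the analogous subtraction between the $P$-equation and the $Q$-equation produces the relation $p(i_0-i_1-1)+q(j_0-j_1+1)=0$ together with its $(p'',q'')$-analogue; the main obstacle I anticipate is precisely this single-monomial subcase, where one must additionally invoke $(P,Q)=1$, $\eta\not\equiv 0$, and $k\ge 2$ to exclude the degenerate collapse $(i_0,j_0)=(i_1+1,j_1-1)$ in which the linear relation between weights and exponents would become trivial. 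Once that collapse is ruled out, the same ratio-comparison argument closes the proof, and the remainder is routine exponent arithmetic.
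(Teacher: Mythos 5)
Your existence half is exactly the paper's argument: set $p'=p/k$, $q'=q/k$, read $k\mid(m-1)$ off the exponent equation $pi+qj=p-1+m$ (using $k\mid p$, $k\mid q$), put $m'=1+(m-1)/k$, and check each monomial scales correctly. That part is correct and needs no change.

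The gap is in the uniqueness half, at precisely the subcase you flagged, and it cannot be closed the way you propose. In the single-monomial subcase the collapse $(i_0,j_0)=(i_1+1,j_1-1)$ combined with coprimality of the two monomials forces $(P,Q)=(ax,by)$ with $ab\neq 0$, and this vector field is \emph{not} excluded by $(P,Q)=1$, $\eta\not\equiv 0$ and $k\geq 2$: take $X=(x,y)$ viewed in $H_{241}$, so $(p,q)=(2,4)$, $k=2$, $m=1$; here $P$ and $Q$ are coprime and $\eta(x,y)=2xy-4xy=-2xy\not\equiv 0$ (Lemma \ref{l9}(b) only rules out multiples of $(px,qy)$). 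For this $X$ every coprime pair $(p'',q'')$ makes $X$ a $(p'',q'')$-quasihomogeneous field of degree $m''=1$, so uniqueness in the strong sense you are trying to prove genuinely fails in the collapse case, and no combination of the paper's standing conventions can rule it out. The paper's own proof never attempts this: it only constructs $(p',q',m')=(p/k,\,q/k,\,1+(m-1)/k)$, with ``unique'' read as the triple being determined by that construction, and stops. Your generic two-monomial ratio comparison is a legitimate strengthening for all other configurations, but to make the written argument correct you must either state the linear diagonal case $(ax,by)$ as an explicit exception or retreat to the paper's weaker reading of uniqueness, rather than claim the collapse can be excluded.
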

\begin{proof}
Let $p=kp',\,q=kq'$. Since $(p,q) = k \geq 2$, we have  $(p',q')=1$.  It follows from the definition of  $P(x,y)$ in \eqref{eq1} that $k p'- 1 + m = k p' i + k q'j$, which implies  that $k|m-1$. Taking $m'=1+(m-1)/k$, the statement follows.
\end{proof}

\begin{lemma}\label{l9} Let $X=(P,Q) \in H_{pqm}\not=\emptyset$ and consider the polynomial $\eta(x,y)$, defined by \eqref{eq2}.
\begin{itemize}
\item[(a)] $\eta(x,y)$ is a $(p,q)$-quasihomogeneous polynomial of degree $p+q+m-1$.
\item[(b)]If $\eta(x,y)\equiv 0$, then $X=(P,Q)=(px,qy)$.
\end{itemize}
\end{lemma}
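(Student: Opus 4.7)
The plan is to prove the two claims by direct algebraic computation with the polynomial $\eta = pxQ - qyP$, exploiting the hypotheses that $P$ and $Q$ are $(p,q)$-quasihomogeneous of respective degrees $p-1+m$ and $q-1+m$, and that $(P,Q)=1$ in the UFD $\mathbb{R}[x,y]$.

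For part (a), I would observe that every monomial $x^iy^j$ appearing in $P$ satisfies $pi+qj=p-1+m$, while every monomial in $Q$ satisfies $pi+qj=q-1+m$. Multiplying by $y$ (weight $q$) or by $x$ (weight $p$) shifts each quasihomogeneous weight by the corresponding amount, so both $yP$ and $xQ$ are $(p,q)$-quasihomogeneous of degree $p+q+m-1$. Since $\eta$ is a real linear combination of these, it is itself $(p,q)$-quasihomogeneous of the same degree, producing an expansion of the form $\eta(x,y)=\sum_{pi+qj=p+q+m-1}c_{ij}x^iy^j$ that is used later in Proposition~\ref{p5}.

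For part (b), the identity $\eta\equiv 0$ becomes $pxQ(x,y)=qyP(x,y)$. Working in the UFD $\mathbb{R}[x,y]$, the irreducible $x$ divides the right-hand side $qyP$ but does not divide $qy$ (since $q\neq 0$ and $x,y$ are distinct indeterminates), so $x\mid P$; symmetrically $y\mid Q$. Writing $P=x\widetilde P$ and $Q=y\widetilde Q$, substituting into the identity, and cancelling $xy$ yields $p\widetilde Q=q\widetilde P$, hence $\widetilde P=(p/q)\widetilde Q$. Consequently $(P,Q)=\widetilde Q\cdot\bigl((p/q)x,\,y\bigr)$, and since $\gcd((p/q)x,y)=1$ in $\mathbb{R}[x,y]$, the coprimality assumption $(P,Q)=1$ forces $\widetilde Q$ to be a nonzero constant. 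Absorbing this scalar — which is a harmless time rescaling that does not alter the coprime representative of the vector field — gives $(P,Q)=(px,qy)$ as claimed.

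The only delicate point is the joint use of unique factorization and the coprimality assumption: one must verify that the common factor extracted from $P$ and $Q$ is exactly $\widetilde Q$, so that $(P,Q)=1$ directly constrains $\widetilde Q$ to a constant. Everything else is weight bookkeeping and monomial cancellation.
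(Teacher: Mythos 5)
Your proof is correct and follows essentially the same route as the paper: part (a) by weight bookkeeping on $yP$ and $xQ$, and part (b) by divisibility in the UFD $\mathbb{R}[x,y]$ combined with the coprimality of $P$ and $Q$. The only cosmetic difference is that the paper uses coprimality at the outset to get $P\mid x$ and $Q\mid y$, whereas you first extract $x\mid P$, $y\mid Q$ from primality of $x,y$ and invoke coprimality afterwards to force the cofactor to be constant; both yield $(P,Q)=(px,qy)$ up to the same harmless scalar.
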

\begin{proof} The statement (a) is a consequence of the fact that $P(x,y)$ and $Q(x,y)$ are $(p,q)$-quasihomogeneous polynomials of degree $m-1+p$ and $m-1+q$, respectively. Now if  $\eta(x,y)\equiv 0$, then $pxQ(x,y)=qyP(x,y)$. As  $P(x,y)$ and $Q(x,y)$ are coprime, follows that
 $P(x,y)|x$ and $Q(x,y)|y$. The result (b) follows.
\end{proof}

The lemma bellow says that, as in the homogeneous case, there is no  limit cycles if  $\eta(1,y)=0$  has zeros, or $\eta(0,1)=0$.

\begin{proposition}\label{p10}
Let $X\in H_{pqm}\not=\emptyset$.
\begin{itemize}
\item[(a)] If $\eta(0,1)=0$, then $x=0$ is an invariant line for the flow of $X$;
\item[(b)] If there exists $\lambda \in \mathbb R$ such that $\eta(1,\lambda)=0$, then  $y^p-\lambda^p x^q=0$ is an invariant curve
of the flow of $X$;
\item[(c)] $X$ has no periodic orbit if (a) or (b) holds.
\end{itemize}
\end{proposition}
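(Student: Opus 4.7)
The plan is to treat the three parts in sequence. Parts (a) and (b) each exhibit the claimed invariant set directly from quasihomogeneity, and (c) is a topological argument combining the Poincar\'e index theorem with uniqueness of orbits.

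For (a), I would compute from \eqref{eq2} that $\eta(0,1)=-qP(0,1)$, so the hypothesis reduces to $P(0,1)=0$. By $(p,q)$-quasihomogeneity of $P$, each monomial $a_{ij}x^iy^j$ in $P$ satisfies $pi+qj=p-1+m$; setting $i=0$ leaves at most one admissible exponent $j_0=(p-1+m)/q$, so $P(0,y)$ is either identically zero or the single monomial $a_{0,j_0}y^{j_0}$. In either case $P(0,1)=0$ forces $P(0,y)\equiv 0$, and hence $\dot x=0$ whenever $x=0$, making the line $x=0$ invariant.

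For (b), my approach is to use the map $\varphi(t)=(t^p,\lambda t^q)$ for $t\in\mathbb R$. Since $p$ is odd by our standing convention, $t\mapsto t^p$ is a real bijection, and for any $(x_0,y_0)$ with $y_0^p=\lambda^p x_0^q$ the unique real $t$ with $t^p=x_0$ satisfies $(\lambda t^q)^p=y_0^p$ and hence $\lambda t^q=y_0$ (uniqueness of real $p$-th roots for $p$ odd). Thus $\varphi$ is a continuous surjection onto the real variety $\{y^p=\lambda^p x^q\}$. Quasihomogeneity yields $P(\varphi(t))=t^{p-1+m}P(1,\lambda)$ and $Q(\varphi(t))=t^{q-1+m}Q(1,\lambda)$, and the check that $(P,Q)\circ\varphi$ is collinear with $\varphi'(t)=(pt^{p-1},\lambda q t^{q-1})$ reduces, after cancelling $t^{p+q+m-2}$, to $pQ(1,\lambda)-\lambda q P(1,\lambda)=\eta(1,\lambda)=0$, which is the hypothesis. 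Hence $\varphi$ traces integral curves of $X$, and the curve is invariant.

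For (c), I plan to argue by contradiction using two facts. First, the origin is the unique equilibrium of $X$: coprimality of $P$ and $Q$ in $\mathbb R[x,y]$ makes their common zero set finite, while quasihomogeneous scaling $(x,y)\mapsto(\lambda^p x,\lambda^q y)$ would spread any nonzero equilibrium into an infinite one-parameter family, contradicting finiteness. By the Poincar\'e index theorem any periodic orbit must therefore enclose the origin. Second, the invariant sets produced in (a) and (b) are connected, pass through the origin, and extend to infinity (for (b), connectedness is inherited from $\mathbb R$ via $\varphi$, and $|\varphi(t)|\to\infty$ as $|t|\to\infty$). Hence a candidate periodic orbit $\gamma$ must intersect the invariant set at some point $q_0\neq 0$, and the orbit of $X$ through $q_0$ would simultaneously be the compact curve $\gamma$ and a non-equilibrium orbit on the invariant set (e.g.\ in (b) one of the unbounded arcs $\varphi((0,\infty))$ or $\varphi((-\infty,0))$). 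This violates uniqueness of the orbit through $q_0$ and rules out periodic orbits.

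The main obstacle will be the topological step in (c): one must check that the invariant set really separates the origin from infinity in $\mathbb R^2$ and that the non-equilibrium orbits lying on it are the two unbounded, hence non-closed, arcs provided by $\varphi$. The reductions in (a) and (b) are routine once one recognizes that restricting $P$ and $Q$ to $x=0$, or composing with $\varphi$, collapses them to a single monomial or a one-variable polynomial in $t$.
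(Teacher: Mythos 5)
Your proof is correct and takes essentially the same route as the paper: part (a) reduces $\eta(0,1)=0$ to $P(0,y)\equiv 0$, part (b) verifies tangency of $X$ to the curve $y^p-\lambda^px^q=0$ (the paper computes $dV/dt$ along the curve where you check collinearity along the parametrization $(t^p,\lambda t^q)$; both computations collapse to the hypothesis $\eta(1,\lambda)=0$), and part (c) is the paper's observation that an unbounded invariant curve through the unique singular point obstructs any periodic orbit, which you simply spell out via the index theorem and uniqueness of orbits. No gaps.
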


\begin{proof}
As $P(x,y)$ is $(p,q)$-quasihomogeneous polynomial of degree $p+m-1$ it follows that $P(0,y^q)=y^{p-1+m}P(0,1)=0$, if $0=\eta(0,1)=P(0,1)$.
Therefore $x=0$ is an invariant line of $X$.

Suppose there exists $\lambda$ such that $\eta(1,\lambda)=0$. If $\lambda=0$, then $0=\eta(1,0)=pQ(1,0)$. So
$Q(x^p,0)=x^{q-1+m}Q(1,0)=0$. Therefore $y=0$ is an invariant curve of $X$. Otherwise, let $V(x,y)=y^p-\lambda^px^q$. Then
\begin{eqnarray*}
\frac{dV}{dt}\Big|_{y=\lambda
x^{q/p}}&=&p\lambda^{p-1}x^{q(p-1)/p}Q(x,\lambda
x^{q/p})-q\lambda^px^{q-1}P(x,\lambda x^{q/p})\\
&=&\lambda^{p-1}(x^{1/p})^{pq+m-1}(pQ(1,\lambda)-q\lambda P(1,\lambda))\\
&=&\lambda^{p-1}(x^{1/p})^{pq+m-1}\eta(1,\lambda)=0,
\end{eqnarray*}
where  $x^{q/p}:=(x^{1/p})^q$. This ends the proof of (b).

From \cite{LLYZ}  system \eqref{eq1} has  a unique singular
point at the origin. If there exists an invariant curve passing through
the unique singular point of system \eqref{eq1}, then no limit
cycle can surround the origin and the statement (c) is proved.
\end{proof}

To study the singular point $(0,0)$ of (\ref{eq1}), we
introduce the {\it $(p,q)$-trigonometric functions
$z(\phi)=\mathrm{Cs}\phi$ and $\omega(\phi)=\mathrm{Sn}\phi$}
\cite{L,LLYZ} as the solution of the following initial problem
\begin{equation*}
\dot
z=-\omega^{2p-1},\,\,\dot\omega=z^{2q-1},\,\,z(0)=p^{-\frac{1}{2q}},\,\,\omega(0)=0.
\end{equation*}
It is  know that ${\mathrm Cs}\phi$ and ${\mathrm Sn}\phi$ are
$\mathcal T$-periodic functions with
\begin{equation*}
{\mathcal
T}=2p^{-\frac{1}{2q}}q^{-\frac{1}{2p}}\frac{\Gamma(\frac{1}{2p})\Gamma(\frac{1}{2q})}{\Gamma(\frac{1}{2p}+\frac{1}{2q})}.
\end{equation*}
and satisfies
\begin{equation*}
p{\mathrm Cs}^{2q}\phi+q{\mathrm Sn}^{2p}\phi=1,\,\,\frac{d{\mathrm
Cs}\phi}{d\phi}=-{\mathrm Sn}^{2p-1}\phi,\,\,\frac{d{\mathrm
Sn}\phi}{d\phi}={\mathrm Cs}^{2q-1}\phi.
\end{equation*}
For $(p,q)=(1,1)$, we have that ${\mathrm Cs}\phi=\cos
\phi,\,\,{\mathrm Sn}\phi=\sin\phi$, i.e. the $(1,1)$-trigonometric
functions are the classical ones.

In the {\it $(p,q)$-polar coordinates $(r,\phi)$}
\begin{equation}\label{eqpq}
x=r^p{\mathrm Cs}\phi,\,\,\,y=r^q{\mathrm Sn}\phi,
\end{equation}
the planar $(p,q)$-quasihomogeneous  system \eqref{eq1} of degree $m$ is written as
\begin{equation*}
\dot r=r^mF(\phi),\,\,\,\,\dot \phi=r^{m-1}G(\phi),
\end{equation*}
with
\begin{equation*}
F(\phi)=\xi\left({\mathrm Cs}\phi,{\mathrm Sn}\phi\right),\,\,\,\,\,\,\,G(\phi)=\eta\left({\mathrm Cs}\phi,{\mathrm Sn}\phi\right),
\end{equation*}
where  $\eta(x,y)$ is defined in \eqref{eq2} and
 \begin{equation}\label{eq15}
 \xi(x,y)=x^{2q-1}P(x,y)+y^{2p-1}Q(x,y).
 \end{equation}

Taking the change of coordinates
\begin{equation}\label{eqst}
\frac{ds}{dt}=r^{m-1},\,\,\,\theta=\frac{2\pi\phi}{\mathcal T},
\end{equation}
the above system goes over to
\begin{equation}\label{eq16}
r'=rf(\theta),\,\,\,\, \theta'=g(\theta).
\end{equation}
where prime denotes derivative with respect to $s$,
\begin{equation}\label{eq17}
f(\theta)=F\left(\frac{{\mathcal
T}\theta}{2\pi}\right),\,\,\,g(\theta)=\frac{2\pi}{\mathcal
T}G\left(\frac{{\mathcal T}\theta}{2\pi}\right).
\end{equation}

It is easy to check that $f(\theta)$ and $g(\theta)$ are
$2\pi$-periodic functions.

\medskip

Now, we are able to prove  Theorem \ref{th3} which discuss about the phase portrait of the vector fields in $H_{pqm}$, provided that $\eta(1,y)$ has no zero and $\eta(0,1)\not=0$.

\begin{proof}[Proof of Theorem \ref{th3}]
First we shall prove that $\eta(x,y)\not=0$ for all $(x,y)\not=(0,0)$.

From Lemma \ref{l9}(a) it follows that if $x \neq 0$, then
\begin{equation}\label{eq14}
\eta(x,y)=(x^{1/p})^{p+q-1+m}\eta\left(1,\frac{y}{x^{q/p}}\right),
\end{equation}
As $\eta(1,y)$ has no zero, we get $\eta(x,y)\not=0$  from \eqref{eq14}, provided that $x \neq 0$.

In what follows we shall prove that $\eta(0,y) \neq 0$. Suppose it does not happen and there exists $y^*\neq 0$ such that $0= \eta(0,y^*) = -q y^* P(0,y^*)$. So
\begin{equation*}
0=P(0,y^*)=(|y^*|^{1/q})^{p-1+m}P(0, {\rm sgn} (y^*)).
\end{equation*}
where
\begin{equation*}
0=P(0,{\rm sgn} (y^*))=\left\{\begin{array}{cl}P(0,1),&\mathrm{if}\,\, y^*>0.\\[2ex]
P(0,-1),&\mathrm{if}\,\, y^*<0.\end{array}\right.
\end{equation*}
If $y^*<0$, then for any $y < 0$,
\begin{equation*}
P(0,y)=(|y|^{1/q})^{p-1+m}P(0, -1)\equiv 0.
\end{equation*}
As $P(0,y)$ is a polynomial in the variable $y$ and it is identically zero for  $y < 0$,  we have $P(0,y)\equiv 0$ and  $P(0,1)=0$.
Hence we conclude that if there exists $y^*\neq 0$ such that $\eta(0,y^*)=0$, then $\eta(0,1)= -q P(0,1)= 0$. This is a contradition with the hypotheses of this theorem. Therefore, $\eta(0,y)\neq 0$ for $y \neq 0$ and one obtain $\eta(x,y)\neq 0$ for $(x,y)\neq (0,0)$.

Secondly  we shall prove the statements (a) and (b).  If $\eta(x,y)\neq 0$, then $g(\theta)\not=0$ (see \eqref{eq17}). It follows from \eqref{eq16} that
\begin{equation*}
\frac{dr}{d\theta}=\frac{rf(\theta)}{g(\theta)},
\end{equation*}
and the first return map is given by
\begin{equation*}
r(2\pi, r_0)=\left\{\begin{array}{ll}r_0e^{\tilde {I}_X}, & \mathrm{if\,\,\,}f(\theta)\not\equiv 0,\\[2ex]
 r_0,& \mathrm{if\,\,\,}f(\theta)\equiv 0,\end{array}\right.
\end{equation*}
where
\begin{equation*}
{\tilde I}_X=\int_0^{2\pi}\frac{f(\theta)}{g(\theta)}d\theta,
\end{equation*}
and $(r_0,0)$ is one point of the positive $x$-axis. From the first return map we
deduce that if ${\tilde I}_X=0$, then the
origin is a center. If ${\tilde I}_X < 0$ (resp. ${\tilde I}_X > 0$), then it is a stable (resp. unstable)  focus.

To end this proof we must show that the sign of $I_X$ is equal to
the sign of ${\tilde I}_X$.

Let $x=\mathrm{Cs}\phi,\,\,y=\mathrm{Sn}\phi$. Then
\begin{equation*}
{\tilde
I}_X=\frac{2\pi}{\mathcal{T}}\int_{0}^{\mathcal{T}}\frac{F(\phi)}{G(\phi)}d\phi
=\frac{2\pi}{\mathcal{T}}\oint_{px^{2q}+qy^{2p}=1}\frac{\xi(x,y)}{x^{2q-1}\eta(x,y)}dy
=\frac{2\pi}{\mathcal{T}}({\tilde I}_+-{\tilde I}_-),
\end{equation*}
where
\begin{equation*}
{\tilde
I}_\pm=\int_{-q^{-1/(2p)}}^{q^{-1/(2p)}}\frac{\xi(x,y)}{x^{2q-1}\eta(x,y)}\Big|_{x=x_{\pm}=\pm
p^{-1/(2q)}(1-qy^{2p})^{1/(2q)}} dy.
\end{equation*}
If  $p$ is odd and $x\not=0$,  then
\begin{equation}\label{eq18}
\xi(x,y)=(x^{1/p})^{2pq-1+m}\xi\left(1,\frac{y}{x^{q/p}}\right),
\end{equation}
where $\xi(x,y)$ is defined in \eqref{eq15}. It follows from \eqref{eq14} and \eqref{eq18} that
\begin{equation*}
{\tilde
I}_+=\int_{-q^{-1/(2p)}}^{q^{-1/(2p)}}\frac{\xi(1,\frac{y}{x^{q/p}})}{x^{q/p}\eta(1,y/x^{q/p})}\Big|_{x=x_+}
 dy.
\end{equation*}
Let $x=x_\pm$ and $u=y/x^{q/p}$. Then $u^{2p}=y^{2p}/x^{2q}$ and $y^{2p}=u^{2p}/(p+qu^{2p})$. Hence

\begin{eqnarray*} {\tilde
I}_+&=&\int_{-\infty}^{\infty}\frac{\xi(1,u)}{x^{q/p}\eta(1,u)}\Big|_{x=x_+}
\frac{1}{2py^{2p-1}}\frac{d}{du}\left(\frac{u^{2p}}{p+qu^{2p}}\right)du\\
&=&\int_{-\infty}^{\infty}\frac{p\xi(1,u)}{(p+qu^{2p})\eta(1,u)}du.
\end{eqnarray*}

When $p$ is odd, \eqref{eq14} and \eqref{eq18} are also true for
$x=x_-$.  Using the same arguments as above, we get ${\tilde I}_-=-{\tilde
I}_+$ and hence ${\tilde I}_X=2{\tilde I}_+$.

As
\begin{equation*}
\int_{-\infty}^{\infty}\frac{p\xi(1,u)}{(p+qu^{2p})\eta(1,u)}du-\int_{-\infty}^{\infty}\frac{P(1,u)}{\eta(1,u)}du\\
=\int_{-\infty}^{\infty}\frac{u^{2p-1}}{p+qu^{2p}}du=0,
\end{equation*}
the statements (a) and (b) follows, provided that $p$ is odd. \end{proof}

\medskip

Our next step is to consider the case where $\eta(1,y)$ has zeros or $\eta(0,1)$ is zero. To this we consider the study of the finite and infinity singularities of $X$.

To study with more details the singularities at infinity, we use {\it Poincar\'e-Lyapunov Compactification}, see for instance \cite{DLA}.
First we blow up the system \eqref{eq1} in the positive $x$-direction by \begin{equation*}
x=\frac{1}{z^p},\,\,\,y=\frac{u}{z^q},
d\tau=\frac{z^{1-m}dt}{p},\,\,z>0.
\end{equation*}
This yields the vector field
\begin{equation}\label{eq22}
\frac{dz}{d\tau}=-zP(1,u),\,\,\,\,\frac{du}{d\tau}=\eta(1,u).
\end{equation}
 Each point $(z,u)=(0,\lambda)$ satisfying $\eta(1,\lambda)=0$ is an infinity
singularity of system \eqref{eq1} and has its linear
part given by
\begin{equation}\label{eq23}
\left(\begin{array}{cc} -P(1,\lambda) & 0\\
0&\frac{\partial \eta}{\partial u}(1,\lambda)
\end{array}
\right).
\end{equation}
Moreover, if $P(1,\lambda)=0$, then $0=\eta(1,\lambda)=p Q(1,\lambda)$, so $P(x,\lambda x^{q/p})=Q(x,\lambda x^{q/p})\linebreak =0$ for
$x>0$. Hence the algebraic curves $P(x,y)=0$ and $Q(x,y)=0$ have
infinite intersection points. As $P(x,y)$ and $Q(x,y)$ are
coprime polynomials, it follows from B\'ezout theorem \cite{Fu} that
$P(x,y)=0$ and $Q(x,y)=0$ have finite intersection points on $\mathbb{R}^2$ , what is a contradiction with our hypothesis. Therefore,
$P(1,\lambda)\not=0$ if $\eta(1,\lambda)=0$ but $\eta(1,y)\not\equiv 0$. Hence  all singular points of system \eqref{eq22}
 are elementary.

Second we blow up the system \eqref{eq1} in the negative  $x$-direction
using the transformation
\begin{equation*}
x=-\frac{1}{z^p},\,\,\,y=\frac{u}{z^q},
d\tau=\frac{z^{1-m}dt}{p},\,\,z>0.
\end{equation*}
One gets
\begin{equation}\label{eq24}
\frac{dz}{d\tau}=zP(-1,u),\,\,\,\,\frac{du}{d\tau}=-\eta(-1,u).
\end{equation}
All points $(0,\lambda)$ satisfying $\eta(-1,\lambda)=0$ are singular points of system \eqref{eq24}.
Their linear part are given by
\begin{equation}\label{eq25}
\left(\begin{array}{cc} P(-1,\lambda) & 0\\
0&-\frac{\partial \eta}{\partial u}(-1,\lambda)
\end{array}
\right).
\end{equation}
The singular points of the system \eqref{eq24} are studied in the same way as the ones of system
\eqref{eq22}.  Moreover all of them are elementary.

Finally we blow up in the $y$-direction by
\begin{equation*}
x=\frac{v}{z^p},\,\,\,y=\pm\frac{1}{z^q},
d\tau=\frac{z^{1-m}dt}{q},\,\,z>0.
\end{equation*}
which yields two vector fields of the forms
\begin{equation}\label{eq26}
\bar X^\infty_\pm:\,\,\,\,\frac{dz}{d\tau}=\mp zQ(v,\pm
1),\,\,\,\,\frac{dv}{d\tau}=\mp \eta(v,\pm 1).
\end{equation}
 We only need to  determine whether the origin is a singular point
 of the vector fields $\bar X^\infty_\pm$. As $\eta(0,1)=0$ implies that $x=0$ is an invariant line of
$X$  (see Proposition \ref{p10}), one gets  $P(0,y)\equiv 0$ and $\eta(0,-1)=qP(0,-1)=0$.
  Using the same arguments as above, it  is
proved that $\eta(0,-1)=0$ implies $\eta(0,1)=0$. This yields that the
origin is a singular point of system $\bar X^\infty_+$ if and only if it is
a singular point of system $\bar X^\infty_-$. We also have $Q(0,\pm
1)\not=0$ if $\eta(0,\pm 1)=0$. Hence $(0,0)$ is elementary
singular point of system $\bar X^\infty_\pm$ if $\eta(0,\pm 1)=0$.

\medskip

\begin{lemma}\label{l11}
Suppose that $p$ is odd.
\begin{itemize}
\item[(a)]
$(0,\lambda)$ is  a singular points of system \eqref{eq22} if and
only if  $(0,(-1)^q\lambda)$  is a singular point of system
\eqref{eq24}.  Suppose that  $\lambda$ is a zero of
$\eta(1,\lambda)$ with multiplicity $k$, then
\begin{itemize}
\item[(i)]If $k$ is even, then
$(0,\lambda)$ and $(0,(-1)^q\lambda)$ are saddle-node of system
\eqref{eq22} and \eqref{eq24} respectively;
\item[(ii)]if $k$ is odd and $P(1,\lambda)(\partial^k \eta/\partial u^k)(1,\lambda)>0$, then
$(0,\lambda)$ and $(0,(-1)^q\lambda)$ are saddles of system
\eqref{eq22} and \eqref{eq24} respectively;
\item[(iii)] if $k$ is odd and $P(1,\lambda)(\partial^k \eta/\partial
u^k)(1,\lambda)<0$, $P(1,\lambda)>0$ (resp. $P(1,\lambda)\linebreak <0$), then
$(0,\lambda)$ is a stable  (resp. unstable) node of \eqref{eq22},
and  $(0,(-1)^q\lambda)$ is a unstable (resp. stable) node if $m$ is
even, a stable (resp. unstable) node of system \eqref{eq24} if $m$
is odd.
\end{itemize}
\item[(b)] System $\bar
X_+^\infty$ has a singular point at  $(0,0)$, if and only if system
$\bar X_-^\infty$ has    a singular point at the origin, if and
only if $\eta(0,1)=0$. Moreover, suppose  $\eta(0,1)=0$, then there
exists positive integers $k,\,l$ such that $m=(k-1)p+(l-1)q+1$, and
\begin{itemize}
\item[(i)] if $k$ is even, then   system
$\bar X_+^\infty$ (resp. $\bar X_-^\infty$) has    a saddle-node at
the origin;
\item[(ii)] if $k$ is odd and $Q(0,1)(\partial^k \eta/\partial x^k)(0,1)<0$,
then    system $\bar X_+^\infty$ (resp. $\bar X_-^\infty$) has    a
saddle at the origin;
\item[(iii)] if $k$ is odd and $Q(0,1)(\partial^k \eta/\partial
x^k)(0,1)>0$, $Q(0,1)>0$ (resp. $Q(0,1)\linebreak <0$),  then system $\bar
X_+^\infty$  has     a  stable (resp. unstable) node at the origin,
and $\bar X_-^\infty$ has a stable (resp. unstable) node at origin
if $l$ is odd, a unstable (resp. stable) node at origin if $l$ is
even, respectively.
\end{itemize}
\end{itemize}
\end{lemma}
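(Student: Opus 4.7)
The plan is to classify each singular point of the Poincar\'e-Lyapunov charts \eqref{eq22}, \eqref{eq24} and \eqref{eq26} by combining the linearizations already computed in \eqref{eq23}, \eqref{eq25} with the $(p,q)$-quasihomogeneity of $P$, $Q$, $\eta$ and their partial derivatives, evaluated at the dilation parameter $\lambda=-1$. For part (a), the central identity is that, since $p$ is odd,
\[
\eta(-x,(-1)^q y)=(-1)^{p+q+m-1}\eta(x,y)=(-1)^{q+m}\eta(x,y),
\]
and analogous relations hold for $P$ (degree $p+m-1$) and for $\partial^j\eta/\partial u^j$ (degree $p+m-1-(j-1)q$). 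Evaluating at $x=1$, $y=\lambda$ and iterating in $u$ yields both $\eta(1,\lambda)=0\Leftrightarrow\eta(-1,(-1)^q\lambda)=0$ and preservation of the zero-multiplicity, giving the claimed bijection between singular points of \eqref{eq22} and \eqref{eq24}.

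Next I analyze $(0,\lambda)$ as a singular point of \eqref{eq22}. The $z$-eigenvalue $-P(1,\lambda)$ is nonzero by the Bezout argument recorded before the lemma, so the point is at least semi-hyperbolic. For $k=1$ it is hyperbolic and cases (ii)--(iii) follow immediately from the eigenvalue signs. For $k\geq 2$, since $\eta(1,u)$ is independent of $z$, the center-manifold theorem (cf.~\cite{DLA}) reduces the flow on the center manifold $z=h(u-\lambda)$ to
\[
\dot v=\frac{1}{k!}\frac{\partial^k\eta}{\partial u^k}(1,\lambda)\,v^k+O(v^{k+1}),\qquad v=u-\lambda.
\]
For $k$ even, $\dot v$ keeps constant sign near $0$ and $(0,\lambda)$ is a saddle-node (case (i)); for $k$ odd, $\dot v$ changes sign, and combining with the sign of $P(1,\lambda)$ gives (ii) when $P(1,\lambda)\,\partial^k\eta/\partial u^k(1,\lambda)>0$ (saddle) and (iii) when this product is negative (node, with stability read from $\mathrm{sgn}\,P(1,\lambda)$).

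To transfer to \eqref{eq24}, the quasihomogeneity identities (with $p$ and $k$ both odd) yield $P(-1,(-1)^q\lambda)=(-1)^m P(1,\lambda)$ and $\partial^k\eta/\partial u^k(-1,(-1)^q\lambda)=(-1)^m\partial^k\eta/\partial u^k(1,\lambda)$. Together with the extra minus signs in \eqref{eq25} and in the equation $\dot u=-\eta(-1,u)$, the $z$-eigenvalue and the center-manifold leading coefficient for \eqref{eq24} equal respectively $(-1)^m$ and $(-1)^{m+1}$ times those of \eqref{eq22}. Consequently saddles and saddle-nodes carry over unchanged, while nodes preserve their stability for $m$ odd and reverse it for $m$ even, which is exactly (a)(iii); the case $k$ even (saddle-node) has both leading coefficients nonzero and so the saddle-node type is preserved in \eqref{eq24} regardless of the parity of $m$.

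Part (b) follows by the same recipe applied to the $y$-direction charts \eqref{eq26}. The equivalence $\eta(0,1)=0\Leftrightarrow\eta(0,-1)=0$ is already proved in the preamble using Proposition \ref{p10} (which gives $P(0,y)\equiv 0$) and coprimality of $P$ and $Q$ (which gives $Q(0,\pm 1)\neq 0$). The relation $m=(k-1)p+(l-1)q+1$ merely records that the leading monomial $c_{k\ell}x^k y^\ell$ producing the $v^k$-term of $\eta(v,\pm 1)$ satisfies $kp+\ell q=p+q+m-1$. A center-manifold reduction parallel to part (a), now using the Jacobians $\mathrm{diag}(\mp Q(0,\pm 1),\mp\partial\eta/\partial x(0,\pm 1))$, yields (i)--(iii) of (b) according to the parity of $k$ and the sign of $Q(0,1)\,\partial^k\eta/\partial x^k(0,1)$. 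The only new feature is that the quasihomogeneity identity relating $\bar X_-^\infty$ to $\bar X_+^\infty$ now produces a factor $(-1)^l$ from evaluating $y^l$ at $y=-1$, so the stability flip for nodes in $\bar X_-^\infty$ is governed by the parity of $l$ rather than of $m$. The main obstacle throughout is the careful sign bookkeeping---tracking the $\mp$ from the blow-ups, the $(-1)$-factors from quasihomogeneity, and the parities of $k$, $m$ and $l$---particularly in the semi-hyperbolic case $k\geq 2$, where a single miscount of parities reverses the stability conclusions.
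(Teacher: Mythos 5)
Your overall route is the same as the paper's: the quasihomogeneity identities \eqref{eq27}--\eqref{eq28} together with $P(-1,(-1)^q\lambda)=(-1)^mP(1,\lambda)$, followed by the classification of hyperbolic and semi-hyperbolic singular points (your explicit center-manifold reduction is just an unpacking of Theorems 2.15 and 2.19 of \cite{DLA}, which the paper cites). However, in the transfer step of part (a) your sign bookkeeping is wrong, and as written the deduction does not go through. The $z$-eigenvalue of \eqref{eq22} at $(0,\lambda)$ is $-P(1,\lambda)$, while that of \eqref{eq24} at $(0,(-1)^q\lambda)$ is $+P(-1,(-1)^q\lambda)=(-1)^mP(1,\lambda)$; hence it is $(-1)^{m+1}$ times the former, not $(-1)^m$ as you claim. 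Combined with your (correct) factor $(-1)^{m+1}$ for the center-manifold leading coefficient, your stated relations would make the product (eigenvalue)$\times$(leading coefficient) change sign, i.e.\ saddles of \eqref{eq22} would become nodes of \eqref{eq24}, contradicting both (a)(ii) and your own next sentence ``saddles carry over unchanged''. The correct bookkeeping gives the factor $(-1)^{m+1}$ for \emph{both} quantities (for $k$ odd), so the product is invariant, the type is preserved, and node stability flips exactly when $m$ is even, which is what the lemma asserts. The conclusion you state is right, but it does not follow from the relations you wrote down.

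In part (b) the argument is incomplete on the same kind of point. You derive the flip for $\bar X_-^\infty$ only from the factor $(-1)^l$ in $(\partial^k\eta/\partial x^k)(0,-1)=(-1)^l(\partial^k\eta/\partial x^k)(0,1)$, but the classification of the origin of $\bar X_-^\infty$ also involves the nonzero eigenvalue $+Q(0,-1)$ (note the reversed signs $\mp$ in \eqref{eq26}), so one must in addition relate $Q(0,-1)$ to $Q(0,1)$; since $P(0,y)\equiv 0$ forces $Q(0,y)=Q(0,1)\,y^{j_0}$ with $qj_0=q+m-1$, this contributes a factor $(-1)^{j_0}$ that you never track, and the comparison of the parities of $j_0$ and $l$ (using $q\mid k-1$, e.g.\ in the simple-zero case $k=1$ one has $j_0=l$) is exactly the step needed to conclude that the type is preserved and the node-stability flip is governed by the parity of $l$. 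The paper is also terse here (``by the same arguments as in (a)''), but a proof written from scratch must carry out this second half of the sign computation; as it stands your part (b) asserts the answer rather than deriving it.
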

\begin{proof}
(a) Since  $p$ is odd, it follows from \eqref{eq14} that
 \begin{equation}\label{eq27}
\eta(-1,(-1)^q \lambda)=(-1)^{p+q-1+m}\eta(1,\lambda), \end{equation} which
implies that $\eta(1,\lambda)=0$ if and only if
$\eta(-1,(-1)^q\lambda)=0$, and hence $(0,\lambda)$ is a singular
point of system \eqref{eq22},  if and only if  $(0,(-1)^q\lambda)$
is a singular point of system \eqref{eq24}. The equality
\eqref{eq27} also yields
\begin{equation}\label{eq28}
\frac{\partial^k\eta(-1,(-1)^q \lambda)}{\partial
u^k}=(-1)^{(k-1)q+m}\frac{\partial^k\eta(1, \lambda)}{\partial u^k}.
\end{equation}
Since
$P(-1,(-1)^q\lambda)=(-1)^mP(1,\lambda)$, the  statements (i),(ii)
and (iii) with $k\geq 2$ in (a) follow from  Theorem 2.19 of
 \cite{DLA}. 
If $k=1$, then all singular points are hyperbolic and
the statement (a) follows from Theorem 2.15 of the same book
\cite{DLA}.

(b) Using the same arguments as above, we get that  system $\bar X_+^\infty$ has a singular point at
$(0,0)$, if and only if  system $\bar X_-^\infty$  has  a singular
point at the origin, if and only if $\eta(0,1)=0$.

Suppose that  $\eta(x,y)$ has the form \eqref{eq4}.
If $\eta(0,1)=0$, then there are $k$ and $l$ such that
$pk+ql=p+q+m-1$, $c_{kl}\not=0$ and $c_{ij}=0$ for $i\leq k-1$,
where $k\geq 1,\,\,l\geq 1$. This gives $(\partial^k \eta/\partial
x^k)(0,\pm 1)=k! c_{kl}(\pm 1)^l$.

If $\eta(0,1)=0$, then $P(0,1)=0$, which implies that $P(x,y)$ has a
divisor $x$.  On the other hand, $Q(0,1)=0$ also means that $Q(x,y)$
has a divisor $x$. Since $P(x,y)$ and $Q(x,y)$ are coprime, we have that
$Q(0,1)\not=0$ if $\eta(0,1)=0$. The other statements in (b) follows
by the same arguments as in the proof of (a).
\end{proof}

Before ending this part some comments are necessary. If $p$ is odd,
 then it follows from \eqref{eq27} that $(0,\lambda)$ is a singular point of system
\eqref{eq22}, if and only if, $(0,(-1)^q\lambda)$ is  a
singular point of system  \eqref{eq24}. Although the information found in the positive $x$-direction also covers
the negative $x$-direction we have studied the behavior of both, positive
and negative $x$-direction, because they will be convenience  in other
proofs presented of this paper.

\medskip

Now, using similar arguments and local charts we  study  the finite singular points of the system \eqref{eq1}. It is important to observe that using $(p,q)$-polar coordinates changes one gets the vector field \eqref{eq16} which is defined on $\mathbb{S}^1 \times \mathbb{R}$.  Although the cylinder  $\mathbb{S}^1 \times \mathbb{R}$ is good surface for getting the  phase portrait near the origin, it is often less appropriate for making calculations, since we have to deal with expressions of $(p,q)$-trigonometric functions. Hence we prefer to make the calculations in different charts.

We are going to use the method of quasihomogeneous blow-up (see for instance \cite{DLA}) in local charts to study
the singular point $(0,0)$ of system \eqref{eq1}. We first blow up the vector field in the positive $x$-direction by
\begin{equation} \label{eq18-1}
x=\bar x^p,\,\,\,y=\bar x^q \bar y,\,\,d\tau=\frac{\bar
x^{m-1}dt}{p},\,\,\bar x>0,
\end{equation}
yielding
\begin{equation}\label{eq19}
\dot {\bar x}=\bar x P(1,\bar y),\,\,\,\,\dot{\bar y}=\eta(1,\bar
y).
\end{equation}
The points $(0,\lambda)$ satisfying $\eta(1,\lambda)=0$ are the
isolated singular points of \eqref{eq19} on the line $\{\bar
x=0\}$. The system in these singular points has their linear part given by
\begin{equation*}
\left(\begin{array}{cc} P(1,\lambda) & 0\\
0&\frac{\partial \eta}{\partial \bar y}(1,\lambda)
\end{array}
\right).
\end{equation*}
Since we have shown $P(1,\lambda)\not= 0$, all singular points of system \eqref{eq19} are elementary.

Next we blow  up the vector field in the negative $x$-direction, the positive $y$-direction and  the negative $y$-direction, respectively. Then we obtain three systems whose singular points are elementary.  Here we omit the details.

After blown up the origin to the elementary singular points  the singular point $(0,0)$ of
\eqref{eq1} has been desingularized. After blowing down we get the
phase portrait of system \eqref{eq1} near the origin.

\medskip

From the later considerations we have that the finite and infinite singularities of $X$ are determined by the zeros of $\eta(1,y)$ or $\eta(0,1)$. The following proposition guarantee that the invariant curves of the system \eqref{eq1} determine the phase portraits of system \eqref{eq1}.
\vskip 0.2cm

\begin{proposition}\label{p13} Suppose that $L_{\lambda_i},\,L_{\lambda_{i+1}}$ are two
consecutive invariant algebraic curve of system \eqref{eq1}, defined
in Proposition \ref{p10}. Let $S_i$ and $S_{i+1}$ be two  singular points at infinity  corresponding  to $L_{\lambda_i}$ and $L_{\lambda_{i+1}}$ respectively. Then $L_{\lambda_i}$ and $L_{\lambda_{i+1}}$ are characteristic orbits  at
$(0,0)$ and determine one sector at the origin:
\begin{itemize}
\item[(a)] two consecutive parabolic sectors at   infinity  singular points $S_i$ and $S_{i+1}$ give a
hyperbolic sector at $(0,0)$,
\item[(b)]two consecutive hyperbolic  sectors at   $S_i$ and $S_{i+1}$    give a
elliptic  sector at $(0,0)$, and
\item[(c)] one parabolic and one hyperbolic  sectors at  $S_i$ and $S_{i+1}$   give a
parabolic sector at $(0,0)$.
\end{itemize}
\end{proposition}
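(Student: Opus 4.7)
\textbf{Proof plan for Proposition \ref{p13}.} The strategy is to perform the analysis in the quasihomogeneous blow-up \eqref{eq19} at the origin and exploit its structural parallel with the infinity chart \eqref{eq22}: both restrict to the same equation $\dot{\bar y} = \eta(1, \bar y)$ on the exceptional divisor, while at each singular point $(0, \lambda_j)$ the linear parts differ only in the sign of the transverse eigenvalue---$P(1, \lambda_j)$ at the origin blow-up versus $-P(1, \lambda_j)$ at infinity. As a first step, I would verify that each $L_{\lambda_i}$ is a characteristic orbit at $(0,0)$: the line $\bar y = \lambda_i$ is invariant under \eqref{eq19} and reduces there to $\dot{\bar x} = \bar x\, P(1, \lambda_i)$; since $P(1, \lambda_i) \ne 0$ (otherwise $\eta(1, \lambda_i) = 0$ would also force $Q(1, \lambda_i) = 0$, contradicting coprimality via B\'ezout, as noted in Section \ref{s2}), one half-orbit limits to $(0, \lambda_i)$ and blows down to an orbit of $X$ tangent to $L_{\lambda_i}$ at the origin.

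Next I would analyse orbits inside the strip $R = \{\bar x > 0,\ \lambda_i < \bar y < \lambda_{i+1}\}$, which corresponds to the planar region bounded by $L_{\lambda_i}$ and $L_{\lambda_{i+1}}$. Assuming without loss of generality that $\eta(1, \bar y) > 0$ on $R$, separation of variables in \eqref{eq19} gives
\[
\bar x(\bar y) \;=\; \bar x_0 \exp\!\left( \int_{\bar y_0}^{\bar y} \frac{P(1, u)}{\eta(1, u)}\, du \right).
\]
Because $\eta(1,\cdot)$ vanishes at each endpoint of $R$ while $P(1,\cdot)$ does not, the integral diverges to $\pm\infty$ with sign controlled by the sign of $P$ at that endpoint. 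A local expansion then yields the dichotomy: as $\bar y \to \lambda_{i+1}^-$, $\bar x \to 0$ iff $P(1, \lambda_{i+1}) < 0$ and $\bar x \to \infty$ iff $P(1, \lambda_{i+1}) > 0$; as $\bar y \to \lambda_i^+$, $\bar x \to 0$ iff $P(1, \lambda_i) > 0$ and $\bar x \to \infty$ iff $P(1, \lambda_i) < 0$.

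I would then read off the $R$-sided sector of $S_j$ from the linear part \eqref{eq23}: the tangential eigenvalue $\eta'(1, \lambda_i)$ is positive and $\eta'(1, \lambda_{i+1})$ is negative (by the sign of $\eta$ on $R$), while the transverse eigenvalue is $-P(1, \lambda_j)$; hence $S_i$ offers a hyperbolic sector to $R$ iff $P(1, \lambda_i) > 0$ and a parabolic one iff $P(1, \lambda_i) < 0$, and symmetrically $S_{i+1}$ offers a hyperbolic sector iff $P(1, \lambda_{i+1}) < 0$ and a parabolic one iff $P(1, \lambda_{i+1}) > 0$. Matching this with the $\bar x$-limit analysis gives: (a) two parabolic sectors $\Leftrightarrow$ $P(1,\lambda_i) < 0,\ P(1, \lambda_{i+1}) > 0$ $\Rightarrow$ $\bar x \to \infty$ at both ends, so no interior orbit reaches $(0,0)$---a hyperbolic sector at the origin; (b) two hyperbolic sectors $\Leftrightarrow$ the opposite signs $\Rightarrow$ $\bar x \to 0$ at both ends, so every interior orbit is homoclinic to the origin---an elliptic sector; (c) one of each sign $\Rightarrow$ $\bar x$ limits to $0$ at one end and $\infty$ at the other---a parabolic sector at the origin.

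The main obstacle I anticipate is handling multiple zeros of $\eta(1,\cdot)$, where by Lemma \ref{l11} the singular points $S_j$ may be saddle-nodes rather than hyperbolic nodes or saddles. The separation-of-variables computation still applies, because near a zero of order $k$ the integrand $P(1,u)/\eta(1,u)$ is nonintegrable with sign governed by $\mathrm{sign}\,P(1,\lambda_j)$ together with the sign of the leading coefficient of $\eta(1,\cdot)$, so the limit of $\bar x$ at the endpoint is still dictated by the same dichotomy. Combined with the fact that the $R$-sided sector of a saddle-node is either its unique hyperbolic sector or one of its parabolic sectors---consistent with the sign of $P(1,\lambda_j)$ above---the three cases (a)--(c) then follow exactly as in the hyperbolic case.
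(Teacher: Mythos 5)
Your proposal is correct, and it is in fact considerably more complete than the argument the paper gives. The paper's own proof is essentially a two-sentence assertion: it cites Lemma \ref{l11} and Proposition \ref{p10} to say that each invariant curve $L_\lambda$ joins the origin to an infinite singular point, and then simply states that each pair of consecutive curves determines a sector, without deriving the correspondence (a)--(c) at all. You actually prove that correspondence. Your key observations --- that the origin chart \eqref{eq19} and the infinity chart \eqref{eq22} share the tangential equation $\dot{\bar y}=\eta(1,\bar y)$ while the transverse eigenvalues are $P(1,\lambda_j)$ and $-P(1,\lambda_j)$ respectively, and that the explicit solution $\bar x(\bar y)=\bar x_0\exp\bigl(\int P(1,u)/\eta(1,u)\,du\bigr)$ forces $\bar x\to 0$ or $\bar x\to\infty$ at each endpoint of the strip according to the sign of $P$ there --- give exactly the dichotomy needed to match parabolic/hyperbolic sectors at $S_i,S_{i+1}$ with the hyperbolic/elliptic/parabolic sector at the origin, and all the sign bookkeeping checks out against \eqref{eq19}, \eqref{eq22} and \eqref{eq23}. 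Two minor points you should make explicit if you write this up fully: (i) when one of the curves is $x=0$ (the case $\eta(0,1)=0$, i.e.\ $\lambda=\pm\infty$), the same computation must be repeated in the $y$-direction charts \eqref{eq26}, which is routine but not literally covered by the strip $\{\bar x>0,\ \lambda_i<\bar y<\lambda_{i+1}\}$; and (ii) in the multiple-zero case your claim that the $R$-sided sector of a saddle-node is consistent with the sign of $P(1,\lambda_j)$ deserves a sentence of justification (it does hold, because on the $R$-side the sign of $\eta$ and hence the direction of the tangential flow toward or away from $S_j$ is fixed, and the transverse eigenvalue $-P(1,\lambda_j)$ is nonzero), but note that in the paper's application in Section \ref{s4} all zeros are simple anyway.
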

\begin{proof}
It follows from Lemma \ref{l11} and the study of  the phase portraits near  $(0,0)$   that  the singular points of $X$ at the origin and at  infinity   are determined by the zeros of $\eta(\pm 1,\lambda)$ or whether $\eta(0,1)=0$.  Moreover, from the Lemma \ref{l11} and Proposition \ref{p10},  if $\lambda$ is a zero of $\eta(\pm 1,\lambda)$ or $\eta(0,1)=0$, then there is an invariant curve $L$  which leaves (or going to) an infinite singular point and goes to (or leaves) the origin. So each pair of consecutive invariant algebraic curve $L_{\lambda_i}$ and  $L_{\lambda_{i+1}}$  determines a local sector  at the origin.
\end{proof}

It is important to comment that Proposition \ref{p10}, Lemma \ref{l11} and Proposition \ref{p13} are fundamental to describe the different phase portrait of the system \eqref{eq1}. They will be very important tools in Section \ref{s4} to calculate the number of topological equivalence classes in $\Omega_{pqm}$.
In fact, the phase portrait of system \eqref{eq1} is determined by the zeros of equations $\eta( 1,\lambda)=0$  and
if $\eta(0,1)=0$. Moreover, from the later proposition follows that,to describe the phase portrait of the system \eqref{eq1}, it is enough to know the sign of the product $\displaystyle P(1,\lambda)\cdot\frac{\partial \eta}{\partial y}(1,\lambda)$ for each zero of $\eta( 1,\lambda)$ and the sign of $\displaystyle P(0,1)\cdot\frac{\partial \eta}{\partial y}(0,1)$, if $\eta(0,1)=0$.

\section{Structural Stability}\label{s3}

In this section we shall prove the Theorem \ref{th4} which characterise the set $\Omega_{pqm}$ of the $(p,q)$-quasihomogeneous vector fields of degree $m$ in the plane which are structurally stable with respect to perturbations in $H_{pqm}$. As observed in the introduction we denote by $E(X)$ the induced (or extended) vector field on Poincar\'e-Lyapunov sphere and $H_{pqm}$ has the coefficient topology. Then we shall say that two vector fields $X$ and $Y$ in $H_{pqm}$ are equivalent if there exists an equivalence $h$ between the induced vector fields $E(X)$ and $E(Y)$ on the Poincar\'e-Lyapunov sphere. In our case it is redundant to required that the equivalence be near of the identity map because the Poincar\'e-Lyapunov sphere is a manifold in the Peixoto condictions and the coefficient topology is equivalent to the $C^1$ topology.

So we can start the prove of the Theorem \ref{th4}.

\begin{proof}[Proof of Theorem \ref{th4}]
Assume that $X \in H_{pqm}$ is structurally stable with respect to perturbations in $H_{pqm}$. Let us  prove that (a) and (b) happen.
If $\eta(1,y)$ has no zero and $\eta(0,1) \neq 0$, then it follows from Theorem \ref{th3} that the origin of the system \eqref{eq1}
is a global center if $I_X=0$, and a global (stable or unstable) focus if $I_X \neq 0$. If $I_X=0$  then there exists a vector field $Y$ in any neighborhood of $X$ in $H_{pqm}$ such that the phase portrait of $Y$ is a global focus. Therefore $X$ is not a structural stable vector field with respect to perturbations in $H_{pqm}$. This yields that if $X\in H_{pqm}$, then $I_X\not=0$, i.e., the condition  (a) is satisfied.

Moreover,  the origin is the unique singular point of system \eqref{eq1}\cite{LLYZ}, and the phase portrait of $X$ is completely determined by the zeros of $\eta(1,y)$ and $\eta(0,1)$ (see Lemma \ref{l11} and Proposition \ref{p13}). If $\eta(1,y)$ has a zero or $\eta(0,1) = 0$, then it follows from Proposition \ref{p10}(c) that there is no periodic orbits in the set of all solutions of the system \eqref{eq1}.

If $\eta(1,y)$ has a multiple zero then, by Lemma \ref{l11}, we conclude that there exists vector field $Y$ in a neighborhood of $X$ in $H_{pqm}$ such that $X$ and $Y$ have different number of sectors. So $X$ is not structural stable with respect to $H_{pqm}$. If $\eta(0,1)=0$ and $\partial\eta(0,1)/\partial x =0$, then  by the same arguments  we get that  $X$ is not structural stable with respect to $H_{pqm}$. Therefore, the condition (b) is satisfied.

Now, we assume that (a) or (b) are satisfied  and we shall prove that $X$ is a structurally stable vector field with respect to perturbations in $H_{pqm}$. First we suppose that (a) is true. Then there exists a neighborhood $V$ of $Y$ and a neighborhood $W$ of $X$ in $H_{pqm}$ such that if $y^* \in V$ then $\eta(1,y^*)\neq 0$ and for all $Y \in W$ we have $sign(I_X)= sign(I_Y)$. Then it follows from Theorem \ref{th3} that $X$ and $Y$ are topologically equivalent and $X$ is structurally stable with respect to perturbations in $H_{pqm}$.

 Assume that there are $s$ points $(1,y_i)$ such that $\eta(1,y_i)=0$ or $\eta(0,1)=0$ and that these zeros are simple. There exists a neighborhood $U$ of $X$ in $H_{pqm}$ such that if $Y=(P_Y,Q_Y) \in U$ then there are exactly $s$ points such that $\eta_Y(1,\lambda_i^*)=0$ or $\eta(0,1)=0$.
We can choose this neighborhood such that $sign(P \partial \eta/\partial u)(1,\lambda_i)=sign(P_Y\partial \eta_Y/\partial u)(1,\lambda_i^*))$, and  if $\eta_Y(0,1)=0$ then $sign(Q \partial \eta/\partial x)(0,1)=  sign(Q_Y\partial \eta_Y/\partial x)(0,1)$. By Lemma \ref{l11} and Proposition \ref{p13} each vector field $Y \in U$ has the same local sectors as the vector field $Y$. So they are topologically equivalent and we conclude that $X$ is structurally stable with respect to perturbations in $H_{pqm}$.
\end{proof}

Here we shall present a concrete example.

\begin{example}\label{example1}
Let $p=1$, $q=m=2$, then $H_{122}$ is non empty.  In fact if $X\in H_{122}$ then  $X$ can be written in the form $X(x,y)=(a_1 x^2 + a_2 y, a_3 x^3 + a_4 x y)$ with  $(a_1,a_2,a_3,a_4) \in {\mathbb R}^4\backslash (0,0,0,0)$. The question is, what are the conditions to $X$ to be structurally stable?  By the direct computation we have $\eta(x,y)=a_3x^4+(a_4-2a_1)x^2y-2a_2y^2$.  It follows from Theorem  \ref{th4} that $X\in \Omega_{122}$ if and only if
 the following conditions holds: (i)  $a_2 \neq 0,\,\,4 a_1^2 + a_4^2 + 4(a_1 a_4 + 2 a_2 a_3) < 0, \,\,I_X\not=0$, (ii) $a_2 \neq 0$ and $4 a_1^2 + a_4^2 + 4(a_1 a_4 + 2 a_2 a_3) > 0$.

So the vector field $X_1(x,y)=(x^2-y/2,x^3+2xy)$ and  $X_2(x,y)=(x^2-y, 2 x^3 - 3 xy)$  are structurally stable vector fields. They are non equivalents and their phase portrait are given in Figure 1. The vector field $X_1$ has a global  unstable  focus at the origin. On the other hand, $X_2$ has four singular points at infinity, two stable and two unstable nodes, therefore, we observe four hyperbolic sectors at origin.
\begin{figure}[h]
\centering
\includegraphics[scale=0.5, bb=26 430 565 699]{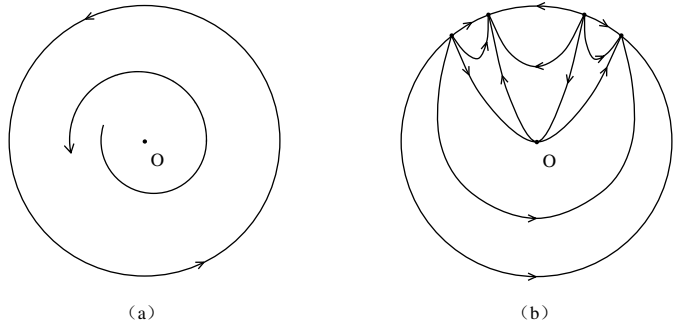}
\caption{Phase portraits of $X_1$ and $X_2$ in the Poincar\'e-Lyapunov disc }
\end{figure}
\end{example}

\section{The number of topological equivalence classes in $\Omega_{pqm}$}\label{s4}

In this section we compute the number of topological equivalence classes in  $\Omega_{pqm}$.

\begin{proof}[Proof of Proposition \ref{p5}] By Lemma \ref{l9}
$\eta(x,y)$ is a $(p,q)$-quasihomogeneous polynomial of degree $p+q+m-1$ having
the form \eqref{eq4}. Then $(i,j)$ in \eqref{eq4} is
a   solution of the equation
\begin{equation}\label{eq29}
 pi+qj=p+q+m-1.
\end{equation}
The pair $(i,j)$ is called an integer (resp. non-negative integer)  solution of  \eqref{eq29}
if $i$ and $j$ are integers (resp. non-negative integers). If
$(i_0,j_0)$ is an integer solution of \eqref{eq29}, then all the
other integer solution are
\begin{equation*}
\{(i_0+lq,\,\,j_0-lp):l\in{\mathbb{Z}}\}.
\end{equation*}

 If $\eta(0,1)\not=0$, $\eta(1,0)\not=0$,
then the equation \eqref{eq29} has two integer solutions $(0,j')$
and $(i',0)$ satisfying $qj'=p+q+m-1$ and $pi'=p+q+m-1$
respectively. Since $(p,q)=1$, we have $pq|(p+q+m-1)$. Hence there
exists $r$ such that $p+q+m-1=(r+1)pq$. Let
$(i_0,j_0)=(0,j')=(0,(p+q+m-1)/q)=(0,(r+1)p)$. Then all non-negative
integer solution of \eqref{eq29} are
$\{(lq,(r+1-l)p):\,\,l=0,1,2,\cdots,r+1\}$, which yields $\eta(x,y)$ has the form, defined in
\eqref{eq5}.  On the other hand, if $\eta(0,1)\not=0$,
$\eta(1,0)\not=0$, then $P(0,1)\not=0,\,\,Q(1,0)\not=0$. This gives that $q|(p+m-1)$ and $p|(q+m-1)$. As
$(p+m-1)/q=(p+q+m-1)/q-1=(r+1)p-1$ and $(q+m-1)/p=(p+q+m-1)/p-1=(r+1)q-1$, we get that $P(x,y)$ and $Q(x,y)$
has the form, defined in
\eqref{eq5}, by the same arguments as above.

Suppose that  $\eta(0,1)\not=0$, $\eta(1,0)=0$. As
$X\in\Omega_{pqm}$, it follows from Theorem \ref{th4} that $y=0$ is a
simple zero of $\eta(1,y)$. By the assumption, the equation
\eqref{eq29} has two integer solutions $(0,j'),\,(i',1)$
respectively and $c_{0,j'}\not=0,\,\,c_{i',1}\not=0$, which implies
that $q|p+m-1$ and $p|m-1$. Let $m-1=k'p$. Then $p+m-1=(k'+1)p$.
Since $(p,q)=1$ we have $q|(k'+1)$. Hence there exists $r$ such
that $p+m-1=rpq$. All non-negative integer solution of \eqref{eq29}
are $\{lq,(r-l)p+1:\,\,l=0,1,2,\cdots, r\}$, which yields that $\eta(x,y)$ has the form in
\eqref{eq6}. On the other hand, if $\eta(0,1)\not=0$,
$\eta(1,0)=0$, then $P(0,1)\not=0,\,\,Q(1,0)=0$. We get $P(x,y)$ and $Q(x,y)$, defined in
\eqref{eq6}, by the same arguments as above.

If $\eta(0,1)=0$ and $\eta(1,0)\not=0$, then \eqref{eq29} has two
integer solutions $(1,j'),\,\,(i',0)$  and $x=0$ is a simple zero of
$\eta(x,1)$. Using the same arguments as above, we get (c).

If $\eta(0,1)=0$ and $\eta(1,0)=0$, then \eqref{eq29} has two
integer solutions $(1,j'),\,\,(i',1)$  and $x=0$ and $y=0$ are
simple zeros of $\eta(x,1)$ and $\eta(1,y)$ respectively. Using the
same arguments as above, one gets (d).
\end{proof}

\begin{proof}[Proof of Lemma \ref{l4.1}]
For the first part of  this lemma, we only prove the case (a.1). Other results are proved by the same arguments.

If  $(p,q,m)\in \Theta_1\cap \Theta_4$, then
\begin{equation*}
p+q+m-1=(r_1+1)pq,\,\,\,m-1=(r_4-1)pq.
\end{equation*}
Eliminating $m$ from the above equations, we get
\begin{equation}\label{eq29.0}
p+q=(r_1-r_4+2)pq,
\end{equation}
which implies $p|q$. Since we suppose $(p,q)=1$, one obtains $p=1$. Substituting $p=1$ into \eqref{eq29.0}, we have
$1=(r_1-r_4+1)q$. Therefore $q=1$ and $r_1=r_4$.

The second part of this lemma  is proved by direct computations.
\end{proof}

\begin{proposition}\label{p14}
Let $p,\,q,\,m,\,r$ be the integers defined in Proposition \ref{p5}.
\begin{itemize}
\item[(a)] Suppose that $p$ and $q$ are odd, then $m$ is odd (resp.
even) if and only if $r$  is odd (resp. even),
\item[(b)] Suppose that $p$ is odd and $q$ is even.
\begin{itemize}
\item[(i)] If $(p,q,m)\in \Theta_1\cup \Theta_2$, then $m$ is even,
\item[(ii)]If  $(p,q,m)\in \Theta_3\cup \Theta_4$, then  $m$ is odd.
\end{itemize}
\end{itemize}
\end{proposition}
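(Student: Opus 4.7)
The entire proposition should reduce to a parity argument on the four Diophantine equations in \eqref{eq9}, so my plan is simply to read each equation modulo $2$ under the relevant hypotheses on the parities of $p$ and $q$. By Proposition \ref{p5}, whenever $X \in \Omega_{pqm}$, the integer $r$ is characterized by one of $\mathcal{E}_1,\mathcal{E}_2,\mathcal{E}_3,\mathcal{E}_4$, and since the conclusion of Proposition \ref{p14} describes the parity of $m$ (and of $r$) as a function of which $\Theta_i$ the triple $(p,q,m)$ belongs to, it suffices to inspect each equation in turn. Lemma \ref{l4.1} already guarantees that the value of $r$ does not depend on which $\mathcal{E}_i$ is chosen (when more than one applies), so I may treat the four cases independently without worrying about consistency.

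For part (a), assume $p$ and $q$ are both odd, so $pq$ is odd and $p+q$ is even. I would reduce each of the four relations mod $2$: $\mathcal{E}_1$ gives $m-1 \equiv r+1 \pmod 2$, i.e.\ $m \equiv r$; $\mathcal{E}_2$ gives $m-1 \equiv r \pmod 2$ (since $p$ is odd), so $m$ and $r$ have the same parity; $\mathcal{E}_3$ is symmetric to $\mathcal{E}_2$; and $\mathcal{E}_4$ gives $m-1 \equiv r-1 \pmod 2$, again $m \equiv r$. Thus in all four situations $m$ and $r$ share parity, which is precisely statement~(a).

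For part (b), assume $p$ is odd and $q$ is even, so $pq$ is even and the right-hand side of each $\mathcal{E}_i$ is even. From $\mathcal{E}_1$, $p+q+m-1 \equiv 0 \pmod 2$ forces $m$ even; from $\mathcal{E}_2$, $p+m-1 \equiv 0 \pmod 2$ again forces $m$ even. This proves (b)(i). Similarly, $\mathcal{E}_3$ yields $q+m-1 \equiv 0 \pmod 2$, so $m$ is odd, and $\mathcal{E}_4$ yields $m-1 \equiv 0 \pmod 2$, so $m$ is odd as well, giving (b)(ii).

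The entire argument is essentially one display of parity bookkeeping; there is no real obstacle, and I would not expect to invoke any machinery beyond the explicit form of the four equations in \eqref{eq9}. The only point deserving mention is that the Convention $(p,q)=1$ combined with $p$ odd, $q$ even forces $pq$ even but $p+q$ odd, which is exactly what flips the parity conclusion between $\Theta_1 \cup \Theta_2$ and $\Theta_3 \cup \Theta_4$ in part (b) — this is the mild asymmetry one should be careful with when writing up the calculation.
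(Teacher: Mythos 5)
Your proof is correct and is essentially the same argument as the paper's: reduce each equation $\mathcal{E}_i$ of \eqref{eq9} modulo $2$ under the given parities of $p$ and $q$ (the paper writes out only the $\Theta_1$ case and leaves the rest to identical reasoning, while you do all four). One cosmetic slip: in your $\mathcal{E}_2$ case of part (a) the congruence should read $m\equiv r \pmod 2$ (since $p-1\equiv 0$), not $m-1\equiv r$, but the conclusion you state is the correct one.
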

\begin{proof}
Suppose $(p,q,m)\in \Theta_1$, then   $p+q+m-1=(r+1)pq$. If $p$ and $q$ are odd, then $p+q-1$ is
odd. As $m=(r+1)pq-(p+q-1)$, we have that $m$ is odd (resp. even)
if and only if $r$  is odd (resp. even).  If $p$ is odd and $q$ is
even, then $p-1$ is even, which implies that $m=(r+1)pq-q-(p-1)$ is
even.

The other statements can be proved by the same arguments.
\end{proof}

\begin{proposition}\label{p15}
 If $X\in\Omega_{pqm}$, then there exists a  number $k$  such that $X$ has $2k$ singular
points at infinity  with  $k\leq r+1$ and $k\equiv r+1$ (mod 2),
where $r$ is defined in  Proposition \ref{p5}.
\end{proposition}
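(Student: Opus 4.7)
The plan is to use the explicit forms of $\eta$ given by Proposition \ref{p5} and count, case by case through (a)--(d), the singular points at infinity of $X$. By the blow-up analysis of Section \ref{s2}, these arise from three sources: the zeros of $\eta(1,y)$ (positive $x$-direction chart \eqref{eq22}), the zeros of $\eta(-1,y)$ (negative $x$-direction chart \eqref{eq24}), and the origin of the $y$-direction charts $\bar X^\infty_\pm$, which is singular if and only if $\eta(0,1)=0$.

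First I would exploit two symmetries that hold because $p$ is odd. By \eqref{eq27}, the map $\lambda\mapsto (-1)^q\lambda$ is a bijection between the real zeros of $\eta(1,\cdot)$ and those of $\eta(-1,\cdot)$, so the positive and negative $x$-direction charts always contribute equal numbers of singular points. As noted in the proof of Lemma \ref{l11}(b), $\eta(0,1)=0$ if and only if $\eta(0,-1)=0$, so the $y$-direction contribution is either $0$ or $2$. Hence the total number of infinite singular points is automatically even, say $2k$, and the task reduces to analysing $k$.

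The heart of the proof is a parity count based on the normal forms \eqref{eq5}--\eqref{eq8}. In each of the four cases, $\eta(1,y)$ is either a polynomial $\phi(y^p)$ or $y$ times such a polynomial, where $\phi$, viewed as a polynomial in $u:=y^p$, has nonzero leading and nonzero constant coefficients; its degree is $r+1$, $r$, $r$, and $r-1$ in cases (a), (b), (c), (d) respectively. Since $p$ is odd, $u=y^p$ is a bijection on $\mathbb{R}$, so the real zeros of $\eta(1,y)$ are counted by the real roots of $\phi$ together with the possible zero at $y=0$. Structural stability (Theorem \ref{th4}) forces all of these to be simple. The classical fact that the number of real roots of a real polynomial of degree $n$ has the same parity as $n$ (complex roots coming in conjugate pairs) then yields $k\leq r+1$ and $k\equiv r+1\pmod 2$ in each case, after accounting for the extra zero at $y=0$ contributed by the factor $y$ in cases (b) and (d), and for the two extra points at the $y$-direction poles in cases (c) and (d).

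The main obstacle is really the uniform bookkeeping: in each of the four cases the degree of $\phi$, the presence or absence of the factor $y$, and the presence or absence of the $y$-direction poles conspire to give the same bound $r+1$ and the same parity $r+1$, and one must verify this arithmetic in all cases. The underlying ingredients (the symmetry \eqref{eq27} and the parity count for real roots of a real polynomial) are otherwise routine once the explicit normal forms from Proposition \ref{p5} are in hand.
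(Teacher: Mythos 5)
Your proposal is correct and takes essentially the same route as the paper: it counts the real zeros of $\eta(1,y)$ via the normal forms of Proposition \ref{p5} (viewing $\eta(1,y)$ as $\phi(y^p)$ or $y\phi(y^p)$ with $p$ odd), adds the $y$-direction points when $\eta(0,1)=0$, and doubles the count through the symmetry \eqref{eq27} and Lemma \ref{l11}. The only difference is that you spell out explicitly the conjugate-pair parity argument and the simplicity of the zeros coming from Theorem \ref{th4}, steps the paper leaves implicit.
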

\begin{proof}
Consider the quasihomogeneous polynomial $\eta(x,y)$, defined in  \eqref{eq4}. Since $p$ is odd, $\eta(1,y)$ has at most $r+1$ real
zeros if $\eta(0,1)\not=0$ and $r$ real zeros if $\eta(0,1)=0$. On
the other hand, $x=0$ is  a simple zero of $\eta(x,1)$ if
$\eta(0,1)=0$.  The statement
follows from Lemma \ref{l11}.
\end{proof}

Let
\begin{equation*}
\Omega_{pqm}^{2k}=\{X\in\Omega_{pqm}: E(X) \,\,\mathrm{has}\,\, 2k
\,\,\mathrm{singular\,\, point\,\, at\,\, infinity}\}
\end{equation*}

The following corollary follows from Proposition \ref{p15}:
\begin{corollary}\label{c16}
Suppose that $r$ is as defined in Proposition \ref{p5}, then
$\Omega_{pqm}=\bigcup_{k\in J_{m,r}} \Omega_{pqm}^{2k}$, where
\begin{equation*}
 J_{m,r}=\left\{k=2j:\,\,0\leq j\leq \frac{r+1}{2}\right\}.
 \end{equation*}
if $r$ is odd, and
\begin{equation*}
 J_{m,r}=\left\{k=2j+1:\,\,0\leq j\leq \frac{r}{2}\right\}.
 \end{equation*}
if $r$ is even, respectively.
\end{corollary}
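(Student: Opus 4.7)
The plan is to deduce this corollary as an immediate combinatorial consequence of Proposition \ref{p15}. By definition the strata $\Omega_{pqm}^{2k}$ are pairwise disjoint as $k$ ranges over the nonnegative integers, and their union exhausts $\Omega_{pqm}$ once we retain only the $k$'s for which $\Omega_{pqm}^{2k}$ is nonempty. Proposition \ref{p15} tells us precisely that such a $k$ must satisfy $0 \leq k \leq r+1$ together with the parity condition $k \equiv r+1 \pmod{2}$, so the task reduces to reparametrising this admissible set of $k$'s by the integer $j$ appearing in the definition of $J_{m,r}$.

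I would then split according to the parity of $r$. If $r$ is odd, then $r+1$ is even, so $k$ must be even; writing $k=2j$, the bound $0\le k\le r+1$ becomes $0\le j\le (r+1)/2$, which recovers the first form of $J_{m,r}$. If $r$ is even, then $r+1$ is odd, so $k$ must be odd; writing $k=2j+1$, the bound $1\le k\le r+1$ becomes $0\le j\le r/2$, which recovers the second form. In both cases the upper endpoint is an integer and so no adjustment of the inequality is required.

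There is essentially no genuine obstacle: the corollary is a notational reformulation of Proposition \ref{p15}. The only point deserving a remark is that the statement as written does not require nonemptiness of each individual $\Omega_{pqm}^{2k}$ with $k\in J_{m,r}$; it merely asserts an equality of sets, which holds trivially once every $X\in\Omega_{pqm}$ is placed in some $\Omega_{pqm}^{2k}$ with $k\in J_{m,r}$ (by Proposition \ref{p15}) and conversely every $\Omega_{pqm}^{2k}$ is by construction contained in $\Omega_{pqm}$. The realisability of each admissible $k$, which will be needed for the subsequent enumeration $C_{pqm}$ of Theorem \ref{th6}, will be dealt with separately by exhibiting explicit quasihomogeneous normal forms of the type prescribed by Proposition \ref{p5}.
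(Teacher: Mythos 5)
Your argument is correct and coincides with the paper's treatment: the corollary is stated there as an immediate consequence of Proposition \ref{p15}, and your reparametrisation of the admissible $k$'s (parity $k\equiv r+1 \pmod 2$, bound $k\le r+1$) by the index $j$ is exactly the intended deduction. Your closing remark that realisability of each $k\in J_{m,r}$ is not needed here, and is handled later via the normal forms, is also consistent with how the paper proceeds.
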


\vskip 0.3cm

 If $X$ and $Y$ are two topological equivalent vector fields in
 $\Omega_{pqm}$, then they have the same number of singular points at
 infinity. Let  $C_{pqm}^k$ be the number of topological
 equivalence  classes in $\Omega_{pqm}^{2k}$. If $k=0$, then  by Theorem \ref{th3}  $C_{pqm}^0=2$ (a global stable
focus and a global unstable focus). It follows from Corollary
 \ref{c16} that
 \begin{equation}\label{cpqm}
  C_{pqm}=\sum_{k\in J_{m,r}}C_{pqm}^k.
 \end{equation}

To convenience, we call $\lambda=+\infty$ (resp. $\lambda=-\infty$)
 a  simple zero of $\eta(1,u)$ (resp. $\eta(-1,u)$) if $\eta(0,1)=0$ (resp.
$\eta(0,-1)=0$) and $\partial \eta(0,1)/\partial x\not=0$. Define
$\mathrm{sgn}(\partial\eta(\pm 1,\pm\infty)/\partial u)= -
\mathrm{sgn}(\partial\eta(0,\pm 1)/\partial x)$ if $\eta(0,\pm
1)=0$. In what follows we suppose that
$\lambda_1,\,\lambda_2,\,\cdots,\lambda_k$ are zeros of $\eta(1,u)$
and $\lambda_{k+1},\,\lambda_{k+2},\,\cdots,\lambda_{2k}$ are zeros
of $\eta(-1,u)$ with $-\infty<\lambda_1<\lambda_2<\cdots<\lambda_k$
and $+\infty>\lambda_{k+1}>\lambda_{k+2}>\cdots>\lambda_{2k}$
respectively, provided $X=(P,Q)\in\Omega_{pqm}^{2k}$.

\begin{proposition}\label{p17}
Let $X=(P,Q)\in\Omega_{pqm}^{2k}$, Then
\begin{itemize}
\item[(a)] $(\partial\eta(1,\lambda_i)/\partial
u)(\partial\eta(1,\lambda_{i+1})/\partial u)<0$,
$i=1,2,\cdots,k-1,\,\,k\geq 2$;
\item[(b)] $(\partial\eta(-1,\lambda_i)/\partial
u)(\partial\eta(-1,\lambda_{i+1})/\partial u)<0$ for
$i=k+1,\cdots,2k-1,\,\,k\geq 2$;
\item[(c)] $P(1,\lambda_i)\not=0,\,\,i=1,2,\cdots,k$, if
$\lambda_k\not=+\infty$, and
$P(-1,\lambda_i)\not=0,\,\,i=k+1,\cdots,2k$, if
$\lambda_{2k}\not=-\infty$;
\item[(d)] $Q(0,\pm 1)\not=0$ if $\eta(0,\pm 1)=0$; and
\item[(e)] $(\partial\eta(1,\lambda_k)/\partial
u)(\partial\eta(-1,\lambda_{k+1})/\partial u)>0$,
$(\partial\eta(1,\lambda_1)/\partial
u)(\partial\eta(-1,\lambda_{2k})/\partial u)>0$.
\end{itemize}
\end{proposition}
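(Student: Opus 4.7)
\bigskip

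\noindent\textbf{Proof proposal.} My plan is to handle the five items separately; (c) and (d) are essentially already proved in earlier parts of the paper, (a) and (b) are elementary sign-change arguments, and (e) will be reduced to the leading behaviour of $\eta(\pm 1,u)$ as $u\to\pm\infty$.

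\emph{(c) and (d).} For (c), the discussion immediately after \eqref{eq23} shows that if $P(1,\lambda)=0$ and $\eta(1,\lambda)=0$ simultaneously, then $Q(1,\lambda)=0$ as well; the $(p,q)$-quasihomogeneity then forces $P$ and $Q$ to vanish on the whole curve $y=\lambda x^{q/p}$, giving infinitely many common zeros and contradicting coprimality via Bézout's theorem. The analogous statement in the chart \eqref{eq24} settles the negative $x$-direction. For (d), I would simply invoke the argument in the proof of Lemma \ref{l11}(b): $\eta(0,\pm 1)=0$ forces $x\mid P$, and if also $Q(0,\pm 1)=0$ then $x\mid Q$, contradicting $(P,Q)=1$.

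\emph{(a) and (b).} Since $X\in\Omega_{pqm}$, Theorem \ref{th4} guarantees that all zeros of $\eta(1,u)$ and of $\eta(-1,u)$ are simple. Between two consecutive simple zeros of a one-variable polynomial there is no other zero, so the polynomial does not change sign on the open interval between them; hence the values of the derivative at the two endpoints must have opposite signs. Applied to $\eta(1,u)$ this gives (a), and applied to $\eta(-1,u)$ it gives (b).

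\emph{(e).} The idea is to compare the asymptotic signs of $\eta(1,u)$ and $\eta(-1,u)$ at $u=+\infty$ and $u=-\infty$. Suppose first $\eta(0,\pm 1)\neq 0$, so all the $\lambda_i$'s are finite. Writing $\eta(x,y)=\sum_{pi+qj=p+q+m-1}c_{ij}x^iy^j$ and using Proposition \ref{p5}, the pure $y$-term $c_{0,j_0}y^{j_0}$ (with $qj_0=p+q+m-1$) appears with the \emph{same} coefficient in both $\eta(1,u)$ and $\eta(-1,u)$, and it dominates for $|u|$ large. Consequently, as $u\to +\infty$ both $\eta(1,u)$ and $\eta(-1,u)$ carry the sign of $c_{0,j_0}$. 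Since $\lambda_k$ is the largest zero of $\eta(1,\cdot)$ and $\lambda_{k+1}$ the largest zero of $\eta(-1,\cdot)$, the simplicity of these zeros gives
\begin{equation*}
\operatorname{sgn}\frac{\partial\eta}{\partial u}(1,\lambda_k)=\operatorname{sgn} c_{0,j_0}=\operatorname{sgn}\frac{\partial\eta}{\partial u}(-1,\lambda_{k+1}),
\end{equation*}
which is the first inequality in (e). Looking at $u\to-\infty$ instead, both $\eta(\pm 1,u)$ carry the sign $(-1)^{j_0}\operatorname{sgn} c_{0,j_0}$, and an identical argument at the smallest zeros $\lambda_1,\lambda_{2k}$ yields the second inequality.

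If $\eta(0,1)=0$ or $\eta(0,-1)=0$, the convention declares $\lambda_k$ (or $\lambda_{k+1}$, or $\lambda_1,\lambda_{2k}$) to be $\pm\infty$ with $\operatorname{sgn}(\partial\eta/\partial u)(\pm 1,\pm\infty)=-\operatorname{sgn}(\partial\eta/\partial x)(0,\pm 1)$. The main obstacle is bookkeeping these conventions: I would use the identity \eqref{eq27}, namely $\eta(-1,(-1)^q\mu)=(-1)^{p+q-1+m}\eta(1,\mu)$, together with its $x$-derivative version, to relate $\partial\eta/\partial x(0,1)$ to $\partial\eta/\partial x(0,-1)$ and check that the convention preserves the sign-matching obtained in the finite case. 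All other steps are routine.
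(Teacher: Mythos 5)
Your treatment of (c), (d), and of all the \emph{finite} zeros follows the paper's own route: (c) via the B\'ezout/coprimality argument after \eqref{eq23}, (d) via the divisibility argument of Lemma \ref{l11}(b), (a)--(b) via sign alternation at consecutive simple zeros, and the finite case of (e) via the common dominant term $c_{0,(r+1)p}u^{(r+1)p}$ of $\eta(\pm 1,u)$. Up to that point the proposal is sound and essentially identical to the paper.

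The genuine gap is the boundary case $\eta(0,\pm 1)=0$, where by the paper's convention $\lambda_k=+\infty$ and $\lambda_{2k}=-\infty$ are counted among the $\lambda_i$ and $\mathrm{sgn}(\partial\eta(\pm 1,\pm\infty)/\partial u)$ is \emph{defined} as $-\mathrm{sgn}(\partial\eta(0,\pm 1)/\partial x)$. In that situation statement (a) with $i=k-1$ and statement (b) with $i=2k-1$ involve these convention-defined signs, and your ``no sign change between consecutive simple zeros'' argument says nothing about them -- $+\infty$ is not a zero of the polynomial $\eta(1,u)$. The paper spends half of its proof of (a)--(b) precisely here: using the normal forms \eqref{eq7}--\eqref{eq8} of Proposition \ref{p5}, one sees that when $\eta(0,1)=0$ the coefficient $c_{1,rp}$ (resp.\ $c_{1,(r-1)p+1}$) is simultaneously $\partial\eta(0,1)/\partial x$ and the leading coefficient of $\eta(1,u)$ as a polynomial in $u$; hence the sign of $\eta(1,u)$ on $(\lambda_{k-1},+\infty)$ determines both $\partial\eta(1,\lambda_{k-1})/\partial u$ and, through the convention, the sign attached to $\lambda_k=+\infty$, and these come out opposite. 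The same identification is what is needed in the infinite case of (e): the first product there pairs the convention sign at $+\infty$ (i.e.\ $-\mathrm{sgn}\,c_{1,rp}$) with the genuine derivative at the largest \emph{finite} zero $\lambda_{k+1}$ of $\eta(-1,u)$, whose sign is again read off from the fact that $c_{1,rp}$ leads the expansion of $\eta(-1,u)$. Your proposed bookkeeping via \eqref{eq27} and its $x$-derivative only relates $\partial\eta(0,1)/\partial x$ to $\partial\eta(0,-1)/\partial x$, which is not the relation required except in the degenerate sub-case $k=1$ where both entries are convention-defined; so as written the plan for the infinite case of (e) would not close, and the infinite case of (a)--(b) is not addressed at all. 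The missing idea, in short, is to invoke Proposition \ref{p5} to identify $\partial\eta(0,\pm 1)/\partial x$ with the leading coefficient of $\eta(\pm 1,u)$ and then track signs exactly as in your finite-case argument.
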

\begin{proof}
As $X\in\Omega_{pqm}$, all zeros of
$\eta(\pm 1,u)$ are simple. The statements (a)  and (b) follow if
$\lambda_{k}\not=+\infty$ and  $\lambda_{2k}\not=-\infty$.

Suppose  $\lambda_k=+\infty$, which yields  $\eta(0,1)=0$ and
$\eta(1,u)\not=0$ for $u>\lambda_{k-1}$. Proposition \ref{p5} shows that
$\eta(x,y)$ has the form as \eqref{eq7} or \eqref{eq8}. If
$\partial\eta(1,\lambda_{k-1})/\partial u>0$ , then
$\eta(1,u)>0,\,\,u\in(\lambda_{k-1},+\infty)$. This implies either
$c_{1,rp}>0$ in \eqref{eq7} or $c_{1,(r-1)p+1}>0$   in
\eqref{eq8}. Since either $\partial \eta(0,1)/\partial
x=c_{1,rp}$ or $\partial \eta(0,1)/\partial x=c_{1,(r-1)p+1}$ happens
and $\mathrm{sgn}(\partial\eta(\pm 1,\pm\infty)/\partial u)= -
\mathrm{sgn}(\partial\eta(0,\pm 1)/\partial x)$, one gets (a) for
$i=k$ if $\partial\eta(1,\lambda_{k-1})/\partial u>0$.  By the same
arguments we get    $\partial\eta(1,\lambda_k)/\partial u>0$ if $\lambda_k=+\infty$,
$\partial\eta(1,\lambda_{k-1})/\partial u<0$.
This proves  (a).

If $\lambda_{2k}=-\infty$,  then $\eta(-1,u)\not=0$ for
$u\in(-\infty,\lambda_{2k-1})$. Repeating the same arguments as the
proof of (a), one gets (b).

The statement (c) and (d) have been proved in Section \ref{s2}.

Finally we prove (e).  Suppose that  $\eta(x,y)$
has the form   \eqref{eq5}. From the
definition of $\lambda_k$ and $\lambda_{k+1}$  we have
$\eta(1,u)\not=0$ in the interval $(\lambda_k,+\infty)$ and
$\eta(-1,u)\not=0$ in the interval $(\lambda_{k+1},+\infty)$
respectively. If $\eta(x,y)$ has the form \eqref{eq5}, then
\begin{equation*}
\eta(\pm 1, u)=c_{0,(r+1)p}u^{(r+1)p}+(\pm 1)c_{1,rp}u^{rp}+\cdots+c_{rq,p}(\pm 1)^{rq}u^p+c_{(r+1)q,0}(\pm 1)^{(r+1)q},
\end{equation*}
which gives $\mathrm{sgn}\eta(1,u)=\mathrm{sgn}\eta(-1,u)$ as
$u\rightarrow +\infty$.  As $\lambda_k$ and $\lambda_{k+1}$ are
simple zeros of  $\eta(1,u)$ and $\eta(-1,u)$ respectively,   one
gets \linebreak $(\partial\eta(1,\lambda_k)/\partial
u)(\partial\eta(-1,\lambda_{k+1})/\partial u)>0$. On the other hand,
$\mathrm{sgn}\eta(1,u)=\mathrm{sgn}\eta(-1,u)$ as $u\rightarrow
-\infty$, which implies $(\partial\eta(1,\lambda_1)/\partial
u)(\partial\eta(-1,\lambda_{2k})/\partial u)>0$.

Suppose that $\eta(x,y)$ has the form
\eqref{eq7}, then
$\lambda_k=+\infty,\,\,\lambda_{2k}=-\infty$, and
\begin{equation*}
\eta(x,\pm 1)=x(c_{1,rp}(\pm 1)^{rp}+c_{1+q,(r-1)p}(\pm
1)^{(r-1)p}x^q+\cdots+c_{1+rq,0}x^{rq}),
\end{equation*}
which yields $\mathrm{sgn}(\partial\eta(\pm 1,\pm\infty)/\partial
u)=-\mathrm{sgn}(\partial\eta(0,\pm 1)/\partial x)=-\mathrm{sgn}
(c_{1,rp}(\pm 1)^{rp})$. As
\begin{equation*}
\eta(\pm 1,u)=(\pm 1)(c_{1,rp}u^{rp}+c_{1+q,(r-1)p}(\pm
1)^qu^{(r-1)p}+\cdots+c_{1+rq,0}(\pm 1)^{rq}),
\end{equation*}
$\mathrm{sgn}(\eta(-1,u))=-\mathrm{sgn}(c_{1,rp})$ for
$u\in(\lambda_{k+1},+\infty)$  and
$\mathrm{sgn}(\eta(1,u))=\mathrm{sgn}(c_{1,rp}(-1)^{rp})$ for
$u\in(-\infty,\lambda_1)$  respectively,  which shows that $\mathrm{sgn}(\partial
\eta(-1,\lambda_{k+1})/\partial u)=-\mathrm{sgn}(c_{1,rp})$ and
$\mathrm{sgn}(\partial \eta(1,\lambda_1)/\partial
u)=-\mathrm{sgn}(c_{1,rp}(-1)^{rp})$. The statement  (e)
 follows.

If $\eta(x,y)$ is defined as \eqref{eq6} or \eqref{eq8},then one gets (e)  by
the same arguments.
\end{proof}

\medskip

Let $S$ be the set of all sequence
$(\sigma,\nu)=\{(\sigma_i,\nu_i)\}_{i\in\mathbb{Z}}$ such that
$\sigma_i,\nu_i\in\{-1,1\}$ and $\sigma_i\sigma_{i+1}<0$ for all
$i\in\mathbb{Z}$. For each $k\in\mathbb{N}$ we denote
\begin{equation*}
S^{2k}=\{(\sigma,\nu)\in
S:\,(\sigma_i,\nu_i)=(\sigma_{i+2k},\nu_{i+2k}),\,\,\mathrm{for\,\,all}\,\,i\in\mathbb{Z}\}.
\end{equation*}

A sequence $(\sigma,\nu)$ is {\it periodic of period $l$} (or {\it
$l$-periodic}) if $l$ is the smallest natural number such that
$(\sigma_i,\nu_i)=(\sigma_{i+l},\nu_{i+l})$ for all
$i\in\mathbb{Z}$. Obviously each  sequence $(\sigma,\nu)$ in
$S^{2k}$ is  periodic and is completely determined if the  elements
$(\sigma_i,\nu_i)$ are given for $i=1,2,\cdots,l$. As
$\sigma_i\sigma_{i+1}=-1$, the period $l$ is an even divisor of
$2k$.

The above notations  have been used in the study of the homogeneous
vector fields in \cite{LRR}.

From Proposition \ref{p17} we can associate a sequence of $S^{2k}$
to each vector field $X\in\Omega_{pqm}^{2k}$  by taking
\begin{equation}\label{eq30}
(\sigma_i,\nu_i)=\left\{\begin{array}{ll}\left(\mathrm{sgn}\left(\frac{\partial\eta(1,\lambda_i)}{\partial
u}\right),\mathrm{sgn}(-P(1,\lambda_i))\right),&i=1,2,\cdots,k-1;\\[2ex]
\left(\mathrm{sgn}\left(\frac{\partial\eta(1,\lambda_i)}{\partial
u}\right),\mathrm{sgn}(-P(1,\lambda_i))\right),&\mathrm{if}\,\,i=k,\,\,\lambda_k\not=+\infty;\\[2ex]
\left(\mathrm{sgn}\left(\frac{-\partial\eta(0,1)}{\partial
v}\right),\mathrm{sgn}(-Q(0,1))\right),&\mathrm{if}\,\,i=k,\,\,\lambda_k=+\infty;\\[2ex]
\left(\mathrm{sgn}\left(\frac{-\partial\eta(-1,\lambda_i)}{\partial
u}\right),\mathrm{sgn}(P(-1,\lambda_i))\right),&i=k+1,\cdots,2k-1;\\[2ex]
\left(\mathrm{sgn}\left(\frac{-\partial\eta(-1,\lambda_i)}{\partial
u}\right),\mathrm{sgn}(P(-1,\lambda_i))\right),&\mathrm{if}\,\,i=2k,\,\,\lambda_{2k}\not=-\infty;\\[2ex]
\left(\mathrm{sgn}\left(\frac{\partial\eta(0,-1)}{\partial
v}\right),\mathrm{sgn}(Q(0,-1))\right),&\mathrm{if}\,\,i=2k,\,\,\lambda_{2k}=-\infty,
\end{array}
\right.
\end{equation}
where $\eta(v,u)$ is defined by \eqref{eq2}.

\begin{proposition}\label{p18}
Let $X=(P,Q)\in\Omega_{pqm}^{2k}$ with $0\not=k\in J_{m,r}$ and let
$(\sigma,\nu)$ be the sequence associated to $X$ according to
\eqref{eq30}.
\begin{itemize}
\item[(a)] Suppose that  both $p$ and $q$ are odd, then
$(\sigma_{i+k},\nu_{i+k})=(-1)^{m-1}(\sigma_i,\nu_i)$ for
$i=1,2,\cdots,k$;
\item[(b)] Suppose that   $p$ is odd and  $q$ is even.
\begin{itemize}
\item[(i)] If $\eta(0,1)\not=0$,
 then $m$ is even and $(\sigma_{2k-i+1},\nu_{2k-i+1})=-(\sigma_i,\nu_i)$  for
$i=1,2,\cdots,k$.
\item[(ii)] If $\eta(0,1)=0$, then $m$ is odd,  $(\sigma_{2k-i},\nu_{2k-i})=(\sigma_i,\nu_i)$ for
$i=1,2,\cdots,k-1$, and
$(\sigma_{2k},\nu_{2k})=(-1)^{r-1}(\sigma_k,\nu_k)$, where $r$ is
defined in  (c) or (d) of Proposition \ref{p5}.
\end{itemize}
\end{itemize}
\end{proposition}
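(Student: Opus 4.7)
The plan is to exploit the $(p,q)$-quasihomogeneity of $\eta$, $P$, $Q$, which have degrees $p+q+m-1$, $p+m-1$, $q+m-1$ respectively. Evaluating the scaling relations at $\lambda=-1$ yields the parity identities
\begin{align*}
\eta(-x,(-1)^q y) &= (-1)^{p+q+m-1}\eta(x,y),\\
P(-x,(-1)^q y) &= (-1)^{p+m-1}P(x,y),\\
Q(-x,(-1)^q y) &= (-1)^{q+m-1}Q(x,y).
\end{align*}
Together with \eqref{eq30}, Proposition \ref{p14}, and (only for the last clause of (b)(ii)) the normal forms of Proposition \ref{p5}, these will be the sole structural inputs.

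For part (a), both $p,q$ are odd, so the identities read $\eta(-1,-u)=(-1)^{m-1}\eta(1,u)$, $P(-1,-u)=(-1)^m P(1,u)$, $Q(-1,-u)=(-1)^m Q(1,u)$. In particular $\lambda$ is a simple zero of $\eta(1,\cdot)$ if and only if $-\lambda$ is a simple zero of $\eta(-1,\cdot)$, so the monotonicity conventions on the $\lambda_i$ force $\lambda_{k+i}=-\lambda_i$. Differentiating the first identity gives $\partial_u\eta(-1,-u)=(-1)^m\partial_u\eta(1,u)$, and substituting into \eqref{eq30} together with the identity for $P$ immediately yields $(\sigma_{k+i},\nu_{k+i})=(-1)^{m-1}(\sigma_i,\nu_i)$ whenever $\lambda_i$ is finite. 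When $\lambda_k=+\infty$ (equivalently $\eta(0,1)=0$), the symmetry $\eta(-x,-y)=(-1)^{m-1}\eta(x,y)$ forces $\eta(0,-1)=0$ (so $\lambda_{2k}=-\infty$), and differentiating with respect to $x$ gives $\partial_v\eta(0,-1)=(-1)^m\partial_v\eta(0,1)$ and $Q(0,-1)=(-1)^m Q(0,1)$; the last two rows of \eqref{eq30} then produce the same conclusion.

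For part (b), $q$ is even, so the identities reduce to $\eta(-1,u)=(-1)^m\eta(1,u)$, $P(-1,u)=(-1)^m P(1,u)$, $Q(-1,u)=(-1)^m Q(1,u)$, whence the zero sets of $\eta(1,\cdot)$ and $\eta(-1,\cdot)$ coincide. Combined with the reversed orderings this forces $\lambda_{2k-i+1}=\lambda_i$ in subcase (i) and $\lambda_{2k-i}=\lambda_i$ for $1\leq i\leq k-1$ in subcase (ii). In (i), $\eta(0,1)\neq 0$ places $(p,q,m)\in\Theta_1\cup\Theta_2$ so Proposition \ref{p14}(b)(i) gives $m$ even; then $\partial_u\eta(-1,u)=\partial_u\eta(1,u)$ and $P(-1,u)=P(1,u)$, and the extra sign flips in the $i>k$ rows of \eqref{eq30} produce $(\sigma_{2k-i+1},\nu_{2k-i+1})=-(\sigma_i,\nu_i)$. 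In (ii), $\eta(0,1)=0$ places $(p,q,m)\in\Theta_3\cup\Theta_4$ so Proposition \ref{p14}(b)(ii) gives $m$ odd; then $\eta(-1,u)=-\eta(1,u)$ and $P(-1,u)=-P(1,u)$, and a direct substitution yields $(\sigma_{2k-i},\nu_{2k-i})=(\sigma_i,\nu_i)$ for $i=1,\dots,k-1$.

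The main obstacle is the last identity $(\sigma_{2k},\nu_{2k})=(-1)^{r-1}(\sigma_k,\nu_k)$ in (b)(ii), because with $q$ even the map $y\mapsto -y$ is not induced by the $(p,q)$-group action, so quasihomogeneity cannot by itself relate $\partial_v\eta(0,1)$ to $\partial_v\eta(0,-1)$. I will therefore resort to the explicit normal forms of Proposition \ref{p5}(c)--(d): setting $x=0$ in either form shows that $\partial_v\eta(0,y)$ reduces to the single monomial $c_{1,rp}\,y^{rp}$ in case (c) and $c_{1,(r-1)p+1}\,y^{(r-1)p+1}$ in case (d), while $Q(0,y)$ is likewise a single monomial with the same exponent. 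Since $p$ is odd, both exponents $rp$ and $(r-1)p+1$ have the same parity as $r$, so evaluation at $y=-1$ contributes a factor $(-1)^r$. Combining this factor with the sign difference between the $i=k$ and $i=2k$ rows of \eqref{eq30} (which compare $-\partial_v\eta(0,1)$ and $-Q(0,1)$ against $+\partial_v\eta(0,-1)$ and $+Q(0,-1)$) gives $-(-1)^r=(-1)^{r-1}$, completing the proof.
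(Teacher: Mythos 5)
Your proof is correct and follows essentially the same route as the paper's: the quasihomogeneity parity identities (the paper's \eqref{eq27}--\eqref{eq28}), Proposition \ref{p14} for the parity of $m$, and the normal forms of Proposition \ref{p5}(c)--(d) for the entries at $\lambda=\pm\infty$ in (b)(ii); your only deviation is treating the $\lambda_k=+\infty$ subcase of (a) directly via the symmetry $(x,y)\mapsto(-x,-y)$ rather than via the normal forms, which is a harmless (indeed slightly cleaner) shortcut. One small slip: with $q$ even the reduction should be $Q(-1,u)=(-1)^{m-1}Q(1,u)$, not $(-1)^{m}Q(1,u)$, but since in part (b) you never use $Q$ through this identity (it enters only through the single monomials $Q(0,y)$ read off from \eqref{eq7} and \eqref{eq8}), nothing in your argument is affected.
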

\begin{proof}
It follows from Lemma \ref{l11} (or \eqref{eq27}) that if $\eta
(1,\lambda_i)=0,\,\,i=1,2,\cdots,k$ and $\lambda_k\not=+\infty$,
then $\eta(-1,(-1)^q\lambda_i)=0$.

(a)\quad Suppose that  both $p$ and $q$ are  odd. As
$\lambda_1<\lambda_2<\cdots<\lambda_k$, we have
$(-1)^q\lambda_1>(-1)^q\lambda_2>\cdots>(-1)^q\lambda_k$, which
implies $\lambda_{i+k}=(-1)^q\lambda_i,\,\,i=1,2,\cdots,k$. As
$P(-1,\lambda_{i+k})=P((-1)^p,(-1)^q\lambda_i)=(-1)^mP(1,\lambda_i)$,
we get $(\sigma_{i+k},\nu_{i+k})=(-1)^{m-1}(\sigma_i,\nu_i)$ from
\eqref{eq28} for $i=1,2,\cdots,k-1$, and for $i=k$ if
$\lambda_k\not=+\infty$ .

If $\lambda_k=+\infty$, then  it follows from Lemma \ref{l11}
$\eta(0,1)=\eta(0,-1)=0$, which implies that $\eta(x,y)$ has the
form  as  \eqref{eq7} or \eqref{eq8}. If $\eta(x,y)$ is defined in
 \eqref{eq7}, then $\partial \eta(0,\pm 1)/\partial
x=c_{1,rp}(\pm 1)^{rp}=c_{1,rp}(\pm 1)^r$ and $q+m-1=rpq$. By
Proposition \ref{p14} $\partial \eta(0,\pm 1)/\partial
x=c_{1,rp}(\pm 1)^m$. As
$Q(0,-1)=(-1)^{q-1+m}Q(0,1)=(-1)^mQ(0,1)$, one gets
$(\sigma_{2k},\nu_{2k})=(-1)^{m-1}(\sigma_k,\nu_k)$.

If $\eta(x,y)$ is defined in  \eqref{eq8}, then  we get
$(\sigma_{2k},\nu_{2k})=(-1)^{m-1}(\sigma_k,\nu_k)$ by the same
arguments.

(b) \quad Suppose that  $p$ is odd and $q$ is  even. If
$\eta(0,1)\not=0$, then $\lambda_{2k-i+1}=\lambda_i$. By the same
arguments we get
$(\sigma_{2k-i+1},\nu_{2k-i+1})=(-1)^{m-1}(\sigma_i,\nu_i)$ for
$i=1,2,\cdots,k$.  On the other hand, if $\eta(0,1)\not=0$, then  it follows  from Proposition \ref{p5} and
 Proposition \ref{p14}(b)(i) that $m$ is even. This proves (i).

If $\eta(0,1)=0$ and $q$ is even, then it follows from  Proposition \ref{p5} and Proposition \ref{p14}(b)(ii)
that $m$ is odd and $\lambda_k=-\lambda_{2k}=+\infty$. By the same
arguments as (a), we have $\lambda_{2k-i}=\lambda_i$ and hence
$(\sigma_{2k-i},\nu_{2k-i})=(\sigma_i,\nu_i)$ for
$i=1,2,\cdots,k-1$.

If $\eta(x,y)$ is defined in  \eqref{eq7}, then it follows from
\eqref{eq7}  that $\partial\eta(0,\pm 1)/\partial
v=c_{1,rp}(-1)^r,\,\,Q(0,\pm 1)=b_{0,rp}(\pm 1)^r$, which gives
$(\sigma_{2k},\nu_{2k})=(-1)^{r-1}(\sigma_k,\nu_k)$. If $\eta(x,y)$
has the form \eqref{eq8}, then we get (ii) by the same arguments.
\end{proof}

Following the  idea in \cite{LRR}, we say that $(\sigma,\nu)$ is
{\it $m$-admissible} if there exists $X\in\Omega_{pqm}^{2k}$ satisfying
\eqref{eq30} for given pair $(p,q)$. Denote by $S_m^{2k}$ the set of all sequence in
$S^{2k}$ that are $m$-admissible.

The next two propositions characterize the sequences in $S^{2k}$
that are $m$-admissible. It follows from Proposition \ref{p5} that, if
$X\in \Omega_{pqm}$, then it  is only necessary to consider the numbers $p,\,q,\,m$ and $r$ that
satisfy one of the equations in  \eqref{eq9}.

\begin{proposition}\label{p19}
Suppose that   $r$ is defined as in Proposition \ref{p5}. Denote by $s$
the number of changes of sign in the sequence
$\{\nu_1,\nu_2,\cdots,\nu_k\}$.  Let $0\not=k\in J_{m,r}$ and
$(\sigma,\nu)=\{(\sigma_i,\nu_i)\}_{i\in\mathbb{Z}}$ verifying
\begin{itemize}
\item[(i)] $(\sigma_{i+k},\nu_{i+k})=(-1)^{m-1}(\sigma_i,\nu_i)$  if
both $p$ and $q$ are odd,
\item[(ii)] $(\sigma_{2k-i+1},\nu_{2k-i+1})=-(\sigma_i,\nu_i)$ if
$p$ is odd and  $q,\,m$ are even,
\item[(iii)] $(\sigma_{2k-i},\nu_{2k-i})=(\sigma_i,\nu_i)$ for $i=1,2,\cdots,k-1$
and $(\sigma_{2k},\nu_{2k})=(-1)^{r-1}(\sigma_k,\upsilon_k)$  if
$p,\,m$ are  odd and $q$ is  even.
\end{itemize}
Then
\begin{itemize}
\item[(a)] If either $k<r+1$ or $k=r+1,\,\,s<r$, then $(\sigma,\nu)$ is $m$-admissible.
\item[(b)] If $k=r+1,\,s=r$, then $(\sigma,\nu)$ is $m$-admissible if and
only if there exists $j\in\mathbb{Z}$ such that $\sigma_j=\nu_j$.
\end{itemize}
\end{proposition}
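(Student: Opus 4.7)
The plan is to reduce the admissibility question to a polynomial interpolation problem with one leading‐coefficient constraint, and then to identify when this constraint is automatic (case (a)) versus when it cuts out a codimension‐one sign condition (case (b)).

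Since $p$ is odd, the change of variable $v=u^p$ is a bijection of $\mathbb R$. Writing $\tilde\eta(v),\tilde P(v),\tilde Q(v)$ for the reduced polynomials obtained from $\eta(1,u),P(1,u),Q(1,u)$ after factoring out the appropriate prefactor ($1$, $u^{p-1}$ or $u$), in case (a) of Proposition~\ref{p5} one has $\deg\tilde\eta=r+1$, $\deg\tilde P=\deg\tilde Q=r$, and the identity $pxQ-qyP=\eta$ restricted to $x=1$ becomes
\[
p\tilde Q(v)-qv\tilde P(v)=\tilde\eta(v).
\]
Each real zero $\lambda_i$ of $\eta(1,u)$ corresponds to a simple real zero $\mu_i=\lambda_i^p$ of $\tilde\eta$, and because $p-1$ is even one has $\sigma_i=\operatorname{sgn}\tilde\eta'(\mu_i)$ and $-\nu_i=\operatorname{sgn}\tilde P(\mu_i)$. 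Matching the coefficients of $v^{r+1}$ and of $v^0$ forces $[\tilde P]_r=-[\tilde\eta]_{r+1}/q$ and $[\tilde Q]_0=[\tilde\eta]_0/p$, while the remaining $r$ coefficients of $\tilde P$ are free and then $\tilde Q$ is determined. The other three subcases of Proposition~\ref{p5} admit the same reduction, the only change being that when $\lambda_k=\pm\infty$ the data $(\sigma_k,\nu_k)$ are read off in the blown‐up chart from $\partial\eta(0,\pm1)/\partial x$ and $Q(0,\pm1)$ respectively, giving an analogous leading‐coefficient identity (for instance $p\,b_{0,rp}-q\,a_{1,rp-1}=c_{1,rp}$ in case (c)).

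For sufficiency in part (a), I would first pick $\tilde\eta$ of degree $r+1$ with exactly $k$ simple real zeros at arbitrarily prescribed locations $\mu_1<\dots<\mu_k$ (the remaining $r+1-k$ zeros, if any, being complex conjugate pairs) and with the sign of $[\tilde\eta]_{r+1}$ chosen so that the induced alternating signs of $\tilde\eta'(\mu_i)$ match $\sigma$; both alternating patterns are realized this way. The leading coefficient $[\tilde P]_r$ is then determined, while the other $r$ coefficients are free. If $k\le r$, one Lagrange‐interpolates to obtain $\tilde P$ with $\operatorname{sgn}\tilde P(\mu_i)=-\nu_i$ at every $i$, and a small generic perturbation ensures $\gcd(P,Q)=1$. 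If $k=r+1$ and $s<r$, then some consecutive pair satisfies $\nu_i=\nu_{i+1}$; the forced zeros of $\tilde P$ in $(\mu_1,\mu_{r+1})$ account for only $s<r$ of its $r$ zeros, leaving at least one free zero that may be placed beyond $\mu_{r+1}$ so that $\operatorname{sgn}\tilde P(\mu_{r+1})$ is no longer forced to equal $\operatorname{sgn}[\tilde P]_r$, and any prescribed pattern is realized.

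For (b), assume $k=r+1$ and $s=r$, so $-\nu_i=\operatorname{sgn}\tilde P(\mu_i)$ strictly alternates at all $r+1$ points. By the intermediate value theorem $\tilde P$ has a zero in each of the $r$ intervals $(\mu_i,\mu_{i+1})$; since $\deg\tilde P=r$ these are all its zeros, hence $\operatorname{sgn}\tilde P(\mu_{r+1})=\operatorname{sgn}[\tilde P]_r$. Since $\tilde\eta$ is monotone past its largest zero with the sign of its leading coefficient, $\sigma_{r+1}=\operatorname{sgn}\tilde\eta'(\mu_{r+1})=\operatorname{sgn}[\tilde\eta]_{r+1}$, and combining this with $[\tilde P]_r=-[\tilde\eta]_{r+1}/q$ gives
\[
\nu_{r+1}=-\operatorname{sgn}\tilde P(\mu_{r+1})=\operatorname{sgn}[\tilde\eta]_{r+1}=\sigma_{r+1}.
\]
Since $\sigma$ and $\nu$ both alternate, $\sigma_{r+1}=\nu_{r+1}$ is equivalent to $\sigma_j=\nu_j$ for \emph{every} $j$, proving necessity. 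Conversely, when the condition holds I pick $\tilde\eta$ realizing the prescribed $\sigma$, interpolate $\tilde P$ so that $\operatorname{sgn}\tilde P(\mu_i)=-\nu_i$ at $i=1,\dots,r$, and the sign at $\mu_{r+1}$ comes out correctly by the same computation. The main obstacle is not any single step but the bookkeeping needed to repeat the argument in each of the four forms of Proposition~\ref{p5} and check compatibility with the three symmetry types of Proposition~\ref{p18}; the worst case is (iii) in Proposition~\ref{p18}, where the factor $(-1)^{r-1}$ creates an asymmetry between $\sigma_k$ and $\sigma_{2k}$ that must be reconciled with the leading‐coefficient dichotomy at infinity.
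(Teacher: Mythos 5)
Your overall strategy is the same as the paper's: use the normal forms of Proposition \ref{p5} to reduce to one--variable polynomials, exploit the relation $\eta=pxQ-qyP$ (which forces the leading coefficient of $\tilde P$ in terms of that of $\tilde\eta$, respectively the identity $pb_{0,rp}-qa_{1,rp-1}=c_{1,rp}$ when $\lambda_k=+\infty$), realize any admissible sign pattern by first fixing $\eta$ with the prescribed simple zeros (padding with positive factors) and then choosing $P$ with prescribed signs at those zeros, define $Q$ from \eqref{eq2}, and for part (b) derive the obstruction from the fact that when $s=r$ all zeros of $\tilde P$ are forced to interlace, so its sign at the extreme zero equals the sign of its (forced) leading coefficient. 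This is exactly the paper's mechanism; your substitution $v=u^p$ and interpolation language is a repackaging of the paper's explicit products $\prod(y^p-\lambda_j^px^q)$, $h(x,y)$ and the correcting factor $\alpha(x,y)$.

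Two execution points are looser than they should be. First, in the sufficiency direction of (b) you interpolate $\tilde P$ at $\mu_1,\dots,\mu_r$ and assert that the sign at $\mu_{r+1}$ ``comes out correctly by the same computation''; that computation only applies to a degree-$r$ polynomial all of whose zeros lie in the gaps $(\mu_i,\mu_{i+1})$, and an arbitrary Lagrange interpolant with the correct signs at the first $r$ points can place its remaining real zero in $(\mu_r,\mu_{r+1})$ or below $\mu_1$, spoiling the sign at $\mu_{r+1}$. The fix is to take the interlaced product form $\tilde P=c_r\prod_{i=1}^r(v-\beta_i)$, $\beta_i\in(\mu_i,\mu_{i+1})$, which is precisely the paper's choice of $P$. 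Similarly, in the case $k=r+1$, $s<r$ the statement ``any prescribed pattern is realized'' needs the explicit monic filler of degree $r-s$ with prescribed sign on $[\mu_1,\mu_{r+1}]$ (zeros placed below $\mu_1$ or above $\mu_{r+1}$, plus positive even-degree factors); this is the role of the paper's $\alpha(x,y)$ built from $(x^{2q}+y^{2p})$, $(y^p+(1-\lambda_1^p)x^q)$ and $(y^p-(1+\lambda_k^p)x^q)$. Second, you only carry out the $\Theta_1$ form in detail and defer the other three forms of Proposition \ref{p5} and the symmetry type (iii) to ``bookkeeping''; the paper does treat these (in particular the case $\eta(0,1)=0$, where the sign of $Q(0,1)=(a+qb)/p$ is not determined by leading coefficients alone and one must use $\nu_r\nu_{r+1}=-1$ to get $ab<0$), and you should too, although you do identify the correct identity at infinity, so this is incompleteness rather than a wrong idea. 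Finally, the coprimality of $P$ and $Q$ and membership in $\Omega^{2k}_{pqm}$ (via Theorem \ref{th4} and Proposition \ref{p18}) deserve an explicit check rather than an appeal to a generic perturbation, which must in any case preserve the fixed leading coefficient.
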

\begin{proof}
(a)  To prove the statement  (a) it is sufficient to find
$X=(P(x,y),Q(x,y))$ such that $X\in\Omega_{pqm}^{2k}$ and \eqref{eq30}
is fulfilled  for $i=1,2,\cdots,k$.

We note that $k<r+1$ implies $s<r$.

{\it Case 1.  $(p,q,m)\in\Theta_1$}.

  Let $\lambda_1<\lambda_2<\cdots<\lambda_k$ be the
finite real numbers with $\lambda_j\not=0,\,\,j=1,2,\cdots,k$ and
\begin{equation*}
\eta(x,y)=a(x^{2q}+y^{2p})^{(r+1-k)/2}\prod_{j=1}^k(y^p-\lambda_j^px^q)
\end{equation*}
with $a\in\mathbb{R},\,a\not=0$.  Since $k\in J_{m,r}$, it follows
from Proposition \ref{p15} that $(r+1-k)/2\in \mathbb{N}$, and hence
$\eta(x,y)$ is a $(p,q)$ quasihomogeneous polynomial of weight
degree $p+q+m-1=(r+1)pq$, which implies that $\eta(1,u)$ has  simple
real zeros $\lambda_1,\,\lambda_2,\,\cdots,\lambda_k$. Furthermore,
\begin{equation*}
\frac{\partial \eta(1,\lambda_i)}{\partial
u}=ap\lambda_i^{p-1}(1+\lambda_i^{2p})^{(r+1-k)/2}\prod_{j=1,j\not=i}^k(\lambda_i^p-\lambda_j^p)\not=0
,\,\,\,\,\,\,i=0,1,\cdots,k.
\end{equation*}
If we choose $a=\pm 1$ in such way that
$\mathrm{sgn}\partial\eta(1,\lambda_1)/\partial u=\sigma_1$, then
$\eta(x,y)$ satisfies
$\mathrm{sgn}\partial\eta(1,\lambda_i)/\partial
u=\sigma_i,\,\,i=1,2,\cdots,k$.

To determine $P(x,y)$ we choose $\mu_i\in(\lambda_j,\lambda_{j+1})$
if $\nu_j\not=\nu_{j+1}$. So we obtain $s$ real numbers
$\mu_1<\mu_2<\cdots<\mu_s$, where $s$ is the number of changes of
sign in the sequence $\{\nu_1,\nu_2,\cdots,\nu_k\}$. Let
\begin{equation*}
h(x,y)=\prod_{i=1}^s(y^p-\mu_i^px^q).
\end{equation*}
If  $\mathrm{sgn}(ah(1,\lambda_1))=-\nu_1$ , then define
\begin{equation*}
\alpha(x,y)=\left\{\begin{array}{cl}
(x^{2q}+y^{2p})^{(r-s-1)/2}(y^p+(1-\lambda_1^p)x^q),&\mathrm{if}\,\, r-s\,\,\mathrm{ is \,\,odd},\\
(x^{2q}+y^{2p})^{(r-s)/2}, &\mathrm{if}\,\, r-s\,\,\mathrm{ is
\,\,even},
\end{array}
 \right.
\end{equation*}
 If  $\mathrm{sgn}(ah(1,\lambda_1))=\nu_1$ , then
\begin{equation*}
\alpha(x,y)=\left\{\begin{array}{cl}
(x^{2q}+y^{2p})^{(r-s-1)/2}(y^p-(1+\lambda_k^p)x^q),&\mathrm{if}\,\, r-s\,\,\mathrm{ is \,\,odd},\\
(x^{2q}+y^{2p})^{(r-s-2)/2}(y^p+(1-\lambda_1^p)x^q)(y^p-(1+\lambda_k^p)x^q),
&\mathrm{if}\,\, r-s\,\,\mathrm{ is \,\,even}.
\end{array}
 \right.
\end{equation*}
Define $P(x,y)$ and $Q(x,y)$ as follows
\begin{equation*}
P(x,y)=-\frac{a}{q}\alpha(x,y)h(x,y)y^{p-1},\,\,Q(x,y)=\frac{1}{px}(\eta(x,y)+qyP(x,y)).
\end{equation*}
As $\eta(0,y)+qyP(0,y)\equiv 0$, $Q(x,y)$ is also a polynomial.
It is easy to prove that $X=(P,Q)$ is a $(p,q)$-quasihomogeneous
vector filed of degree $m$. From Theorem \ref{th4} and
Proposition \ref{p18}  it follows that $X\in \Omega_{pqm}^{2k}$ and
$(\sigma,\nu)$ associated to $X$ verifies either (i) or (ii).
Therefore $(\sigma,\nu)$ is $m$-admissible.

{\it Case 2.  $(p,q,m)\in\Theta_2$.}

Let $\lambda_1<\lambda_2<\cdots<\lambda_k$ be the finite real numbers
with $0\in\{\lambda_1,\lambda_2,\cdots,\lambda_k\}$. Without loss of
generality suppose $\lambda_l=0$. Let
\begin{eqnarray*}
\eta(x,y)&=&a(x^{2q}+y^{2p})^{(r+1-k)/2}y\prod_{i=1,i\not=l}^k(y^p-\lambda_i^px^q),\\
P(x,y)&=&-\frac{a}{q}\alpha(x,y)h(x,y),
\end{eqnarray*}
 where $a=\pm 1,\,\alpha(x,y),\,\,h(x,y)$ are
defined in the same way as in Case 1. Define $Q(x,y)$ by the
equation \eqref{eq2}. Using the same arguments as
in Case 1, $X=(P,Q)\in \Omega_{pqm}^{2k}$ and $(\sigma,\nu)$ associated
to $X$ verifies either (i) or (ii). Therefore $(\sigma,\nu)$ is
$m$-admissible.

{\it Case 3. $(p,q,m)\in\Theta_3$}.

Let $\lambda_1<\lambda_2<\cdots<\lambda_{k-1}$ be the finite real
numbers with $\lambda_i\not=0,\,\,i=1,2,\cdots,k-1$,
$\lambda_k=+\infty$,  and
\begin{equation*}
\eta(x,y)=ax(x^{2q}+y^{2p})^{(r+1-k)/2}\prod_{i=1}^{k-1}(y^p-\lambda_i^px^q),
\end{equation*}
where $a$ is chose in the same way as in Case 1. If  $\mathrm{sgn}(ah(1,\lambda_1))=-\nu_1$ , then define
\begin{equation*}
\alpha(x,y)=\left\{\begin{array}{cl} (x^{2q}+y^{2p})^{(r-s-1)/2},
&\mathrm{if}\,\, r-s\,\,\mathrm{ is \,\,odd},\\
(x^{2q}+y^{2p})^{(r-s-2)/2}(y^p+(1-\lambda_1^p)x^q),&\mathrm{if}\,\,
r-s\,\,\mathrm{ is \,\,even} .
\end{array}
 \right.
\end{equation*}
 If
$\mathrm{sgn}(ah(1,\lambda_1))=\nu_1$, then
\begin{equation*}
\alpha(x,y)=\left\{\begin{array}{cl}
-x^q(x^{2q}+y^{2p})^{(r-s-3)/2}(y^p+(1-\lambda_1^p)x^q),
&\mathrm{if}\,\, r-s\,\,\mathrm{ is \,\,odd},\\
-x^q(x^{2q}+y^{2p})^{(r-s-2)/2},&\mathrm{if}\,\, r-s\,\,\mathrm{ is
\,\,even}.
\end{array}
 \right.
\end{equation*}
Let
\begin{equation*}
P(x,y)=-\frac{a}{q}xy^{p-1}\alpha(x,y)h(x,y),\,\,\,\,Q(x,y)=\frac{1}{px}(\eta(x,y)+qyP(x,y)),
\end{equation*}
 where $h(x,y)$ is defined as in Case 1. Using the same arguments as
in Case 1, $X=(P,Q)\in \Omega_{pqm}^{2k}$ and $(\sigma,\nu)$ associated
to $X$ verifies either (i) or (iii). Therefore $(\sigma,\nu)$ is
$m$-admissible.

{\it Case 4. $(p,q,m)\in\Theta_4$.}

 Let
$\lambda_1<\lambda_2<\cdots<\lambda_{k-1}$ be the finite real numbers
with $0\in\{\lambda_1,\lambda_2,\cdots,\lambda_{k-1}\}$, and
$\lambda_k=+\infty$. Without loss of generality suppose
$\lambda_l=0$. Let
\begin{eqnarray*}
\eta(x,y)=axy(x^{2q}+y^{2p})^{(r+1-k)/2}\prod_{i=1,i\not=l}^{k-1}(y^p-\lambda_i^px^q),\,\,
P(x,y)=-\frac{a}{q}x\alpha(x,y)h(x,y),
\end{eqnarray*}
where $h(x,y)$ and $\alpha(x,y)$ are  defined as in Case 1 and Case 3 respectively, $a=\pm 1 $ is chose such that
  $\mathrm{sgn}\partial\eta(1,\lambda_1)/\partial u=\sigma_1$.
 Define $Q(x,y)$ by the
equation \eqref{eq2}. We get a  a
$(p,q)$-quasihomogeneous vector filed $X=(P,Q)$ of degree
$m$ which is structurally
stable.  The sequence  $(\sigma,\nu)$ associated to $X$ verifies either (i) or
(iii). Therefore $(\sigma,\nu)$ is $m$-admissible.

(b) Let $k=r+1,\,\,s=r$. Suppose that $(\sigma,\nu)$ is
$m$-admissible, then there exists $X=(P,Q)\in\Omega_{pqm}^{2k}$ such
that $(\sigma,\nu)$ associated to $X$ verifies one of  (i), (ii) and
(iii) and  $\eta(x,y),\,P(x,y),\,Q(x,y)$ have one of the forms
listed in Proposition \ref{p5}.

If $\eta(0,1)\not=0,\,\eta(1,0)\not=0$, then $\eta(x,y)$  has the
form \eqref{eq5}, which implies that
$\eta(1,u)=a\prod_{i=1}^{r+1}(u^p-\lambda_i^p)$, where
$\lambda_1<\lambda_2<\cdots<\lambda_{r+1}$ with
$\lambda_i\not=0,\,\,i=1,2,\cdots,r+1$ and
$a\in\mathbb{R},\,\,a\not=0$. This gives
\begin{equation*}
\eta(x,y)=(x^{1/p})^{p+q+m-1}\eta\left(1,\frac{y}{x^{q/p}}\right)=a\prod_{i=1}^{r+1}(y^p-\lambda_i^px^q).
\end{equation*}
Since $s=r$, $P(1,y)$ has $r$ real zeros which belongs to the interval $(\lambda_1,\lambda_{k+1})$. It follows from
\eqref{eq5} that $P(x,y)$ is necessarily of the form
\begin{equation*}
P(x,y)=b\left(\prod_{i=1}^r(y^p-\mu_i^px^q)\right)y^{p-1},\,\,\,\lambda_1<\mu_1<\lambda_2<\cdots<\lambda_r<\mu_r<\lambda_{r+1},
\end{equation*}
with $b\in\mathbb{R},\,\,b\not=0$.
$Q(x,y)$ is defined by the equation \eqref{eq2}.
Hence  we have $\eta(0,1)=a=-qb=-qP(0,1)$. Therefore
$\sigma_1=\mathrm{sgn}(\partial\eta(1,\lambda_1)/\partial
u)=\mathrm{sgn}((-1)^ra)=\mathrm{sgn}((-1)^{r+1}b)=-\mathrm{sgn}(P(1,\lambda_1))=\nu_1$.

If $\eta(0,1)=0,\,\eta(1,0)\not=0$, then $\eta(1,u)$ has $r+1$ zeros
$\lambda_1<\lambda_2<\cdots<\lambda_r<\lambda_{r+1}=+\infty$ with
$\lambda_i\not=0,\,i=1,2,\cdots,r$. By the same arguments as above,
we have
\begin{equation*}
\eta(x,y)=ax\prod_{i=1}^{r}(y^p-\lambda_i^px^q),\,\,P(x,y)=bx\left(\prod_{i=1}^{r-1}(y^p-\mu_i^px^q)\right)y^{p-1}
\end{equation*}
with $ab\not=0,\,a,b\in\mathbb{R}$, and $\lambda_1<\mu_1<\lambda_2<\cdots<\mu_{r-1}<\lambda_r<+\infty$. This gives
\begin{equation*}
Q(x,y)=\frac{1}{p}\left(a\prod_{i=1}^{r}(y^p-\lambda_i^px^q)+qb\left(\prod_{i=1}^{r-1}(y^p-\mu_i^px^q)\right)y^p\right).
\end{equation*}
Hence
$\nu_r=\mathrm{sgn}(-P(1,\lambda_r))=\mathrm{sgn}(-b),\,\,\nu_{r+1}=\mathrm{sgn}(-Q(0,1))=\mathrm{sgn}(-a-qb)$.
Since we suppose $s=r$, we have $\nu_r\nu_{r+1}=-1$, which implies
$ab<0$. This yields
$\sigma_1=\mathrm{sgn}(\partial\eta(1,\lambda_1)/\partial
u)=\mathrm{sgn}(a(-1)^{r-1})$,
$\nu_1=\mathrm{sgn}(-P(1,\lambda_1))=\mathrm{sgn}(-b(-1)^{r-1})$.
Finally, one gets  $\sigma_1=\nu_1$.

If either $\eta(0,1)\not=0,\,\eta(1,0)=0$ or
$\eta(0,1)=\eta(1,0)=0$, one gets $\sigma_1=\nu_1$ by the same
arguments.

Conversely, if $\sigma_1=\nu_1$ and $k=r+1,\,s=r$, then there exist
the vector field $X=(P,Q)$ such that $(\sigma,\nu)$ verifies one of
(i), (ii) and (iii), where $P(x,y),\,Q(x,y)$ are defined as above.
This finishes proof.
\end{proof}

\begin{proposition}\label{p20}
Let $0\not=k\in J_{m,r}$ and $X,\,X'\in\Omega_{pqm}^{2k}$.  Denote by
$(\sigma,\nu),\,(\sigma',\nu')\in S_m^{2k}$ the sequences associated
to $X$ and $X'$ respectively. Then $X$ and $X'$ are topologically
equivalent if and only if there exists $\tau\in \mathbb{N}$ ($0\leq
\tau\leq 2k-1$) such that one of the following conditions is satisfied for
all $i\in \mathbb{Z}$:
\begin{equation*}
(\sigma_i,\nu_i)=(\sigma'_{i+\tau},\nu_{i+\tau}'),
\end{equation*}
\begin{equation*}
(\sigma_i,\nu_i)=(\sigma'_{2k-i+1+\tau},\nu_{2k-i+1+\tau}').
\end{equation*}
\end{proposition}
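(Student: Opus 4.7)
The plan rests on the observation that, for $X \in \Omega_{pqm}^{2k}$, the global phase portrait of $E(X)$ on the Poincar\'e-Lyapunov disc is completely determined by the cyclic sequence $(\sigma,\nu)$. Indeed, Theorem \ref{th4} together with Lemma \ref{l11} ensures that every infinite singular point is elementary, with its topological type (hyperbolic saddle, stable node, or unstable node) encoded by $(\sigma_i,\nu_i)$; and by Proposition \ref{p13} the sector at the origin lying between the invariant curves that arrive at $\lambda_i$ and $\lambda_{i+1}$ is hyperbolic, elliptic, or parabolic according to the pair $(\nu_i,\nu_{i+1})$. Since the origin is the unique finite singularity and there are no periodic orbits (Proposition \ref{p10}(c)), these data suffice to decompose the closed disc into $2k$ canonical regions, each bounded by two consecutive separatrices joining the origin to infinity.

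For necessity, suppose $h$ is an equivalence homeomorphism carrying orbits of $E(X)$ onto orbits of $E(X')$ preserving sense. Then $h$ preserves the invariant circle at infinity and hence restricts to a homeomorphism of $S^1$ sending the $2k$ infinite singularities of $X$ bijectively onto those of $X'$. This restriction is either orientation-preserving, in which case the induced index map is a cyclic shift $\pi(i) = i + \tau$, or orientation-reversing, in which case our labelling convention (in which indices $1,\dots,k$ lie in the positive $x$-chart and $k+1,\dots,2k$ in the negative $x$-chart) yields $\pi(i) = 2k - i + 1 + \tau$. Preservation of the local type at each infinite singular point and of each origin-sector under $h$ forces $(\sigma_i,\nu_i) = (\sigma'_{\pi(i)},\nu'_{\pi(i)})$ for all $i$, which is exactly the stated condition.

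For sufficiency, assume the sequences agree after some admissible $\pi$. I would construct $h$ piecewise. First define $h$ on the $2k$ infinite singular points and on the $2k$ separatrices joining them to the origin so that the induced bijection on indices equals $\pi$ (and so that the parametrization along each separatrix is monotone, consistent with the sense of the flow). Because the two triples of local data — type of infinite singularity, type of origin sector, and orientation of the separatrices — agree on each of the $2k$ canonical regions, standard flow-box and canonical-form arguments for elementary sectors (hyperbolic, parabolic, and elliptic) let us extend $h$ continuously across each region. Finally, glue the pieces along the separatrices and the boundary circle to obtain an equivalence on the whole disc.

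The main obstacle is the reverse-shift case. A reflection of the disc reverses the cyclic order of the infinite singularities and simultaneously reflects each local portrait, which may interchange the roles encoded by $\sigma_i$ and $\nu_i$ (stable versus unstable manifolds at saddles, orientation of parabolic sectors, and so on). One must verify that the sign conventions built into \eqref{eq30} — in particular the asymmetric treatment of the charts $x > 0$ and $x < 0$ and of the $v$-chart — are exactly compatible with this reflection, so that the ``$+1$'' in the index formula $2k - i + 1 + \tau$ is precisely what makes the local data match up. Once this bookkeeping is carried out, the piecewise construction proceeds as in the orientation-preserving case, following the strategy used for the homogeneous setting in \cite{LRR}.
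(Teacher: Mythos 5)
Your proposal is correct and follows essentially the same route as the paper, which simply defers to the proof of Proposition 11 of \cite{LRR}: the pair $(\sigma_i,\nu_i)$ determines the local type at each infinite singular point (via Lemma \ref{l11}) and hence, by Proposition \ref{p13}, each sector at the origin, so the phase portrait is determined by the cyclic sequence up to rotation and reflection of the Poincar\'e--Lyapunov disc, and the equivalence homeomorphism is built sector by sector. Your explicit flagging of the sign bookkeeping in the orientation-reversing case is a reasonable elaboration of the same argument rather than a departure from it.
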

\begin{proof}
The statement is proved by following the arguments in  the proof of
Proposition  11 of \cite{LRR}.
\end{proof}

Let  $w=(\sigma,\nu)=\{w_i=(\sigma_i,\nu_i),\,i\in\mathbb{Z}\}\in
S_{m}^{2k}$. It follows from Proposition \ref{p19} that the sequence
$\bar w=\{{\bar w}_i=(\sigma_{i+1},\nu_{i+1}),\,i\in\mathbb{Z}\}$
also belongs $S_{m}^{2k}$. Define the application
\begin{equation*}
\mathfrak{R}:\,\,S_m^{2k}\rightarrow S_m^{2k}
\end{equation*}
such that if $w=(\sigma,\nu)\in S_m^{2k}$, then $\mathfrak{R}(w)$ is
the sequence of $S_m^{2k}$ verifying
\begin{equation}\label{eq31}
(\mathfrak{R}(w))_i=w_{i+1}\,\,\,\,\,\,\mathrm{for
\,\,all}\,\,i\in\mathbb{Z}.
\end{equation}
Whenever $f:\,\,A\rightarrow A$ is an arbitrary application then
$a\in A$ is called a {\it $l$-periodic point} of $f$ if $f^l(a)=a$
and $f^i(a)\not=a$ for $i=1,2,\cdots,l-1$. The integer $l$ is called
the {\it period} of $a$.  The set
$C=\{a,f(a),\cdots,f^{l-1}(a)\}\subset A$ is a {\it cycle of order
$l$} ({\it $l$-cycle}) of $f$ if $a\in A$ is a $l$-periodic point of
$f$.  These notation can be found in many papers, see for instance
\cite{LRR}.

From \eqref{eq31} $w\in S_m^{2k}$ is a $l$-periodic point of
$\mathfrak{R}$ if and only if $w$ is a $l$-periodic sequence.
Therefore $C\subset S_m^{2k}$ is a $l$-cycle of $\mathfrak{R}$ if
there exists a $l$-periodic sequence $w\in S_m^{2k}$ such that
$C=\{w,\mathfrak{R}(w),\cdots,\mathfrak{R}^{l-1}(w)\}$. Each
sequence $w\in S_m^{2k}$ belongs to some cycle of order $l$ of
$\mathfrak{R}$ where $l$ is an even division of $2k$\cite{LRR}.

Define the application
\begin{equation*}
\Psi:\,\,\,S_m^{2k}\rightarrow S_m^{2k},
\end{equation*}
where if $w\in S_m^{2k}$ then $\Psi(w)$ is given by
\begin{equation*}
(\Psi(w))_i=w_{2k-i+1}.
\end{equation*}
By the definition we know that $\Psi$ is the application that
reverses the order of the elements
$(\sigma_1,\nu_1),\,(\sigma_2,\nu_2),\cdots,(\sigma_{2k},\nu_{2k})$.

The following two propositions have been obtained  in \cite{LRR} for
homogeneous polynomial vector fields. They  can be proved  in the
same way for $(p,q)$-quasi-homogenous polynomial vector fields. Here
we omit the details.

\begin{proposition}\label{p21}
Let $0\not=k\in J_{m,r}$. The following statements are true:
\begin{itemize}
\item[(a)] $\Psi\circ\Psi=Id$;
\item[(b)] $\Psi\circ\mathfrak{R}=\mathfrak{R}^{-1}\circ\Psi$.
\end{itemize}
\end{proposition}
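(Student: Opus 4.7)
The plan is to reduce both statements to direct computations from the defining formulas of $\Psi$ and $\mathfrak{R}$, after first checking that both maps actually send $S_m^{2k}$ into itself (which is not quite automatic, since $S_m^{2k}$ is a proper subset of $S^{2k}$ carved out by the admissibility symmetries of Proposition \ref{p19}).

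First I would verify the well-definedness. For $\mathfrak{R}$, if $w=(\sigma,\nu)\in S^{2k}$, then the shifted sequence $\mathfrak{R}(w)$ obviously still satisfies $\sigma_i\sigma_{i+1}<0$ and the $2k$-periodicity, so $\mathfrak{R}(w)\in S^{2k}$; to land in $S_m^{2k}$ one checks that the relevant symmetry among (i), (ii), (iii) of Proposition \ref{p19} is invariant under the index shift $i\mapsto i+1$, which is clear (the conditions only involve relations $i\leftrightarrow i+k$ or $i\leftrightarrow 2k-i+1$ or $i\leftrightarrow 2k-i$ modulo $2k$, and these are translation-equivariant once one reindexes). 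For $\Psi$, the key check is that reversing order preserves the admissibility symmetries; for instance in case (ii) one replaces $i$ by $2k-i+1$ and observes that the involution $i\mapsto 2k-i+1$ commutes with itself, and similarly for cases (i) and (iii) one uses that the conditions have the appropriate dihedral symmetry. Moreover the constraint $\sigma_i\sigma_{i+1}<0$ is preserved under reversal because in the reversed sequence consecutive indices $i$ and $i+1$ correspond to original indices $2k-i+1$ and $2k-i$, which are again consecutive.

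Having established that both maps act on $S_m^{2k}$, part (a) follows from the calculation
\[
(\Psi\circ\Psi(w))_i=(\Psi(w))_{2k-i+1}=w_{2k-(2k-i+1)+1}=w_i,
\]
valid for every $i\in\mathbb{Z}$ and every $w\in S_m^{2k}$, so $\Psi\circ\Psi=\mathrm{Id}$. This also shows $\Psi$ is a bijection equal to its own inverse.

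For part (b), I would compute both compositions index by index. On the one hand,
\[
(\Psi\circ\mathfrak{R}(w))_i=(\mathfrak{R}(w))_{2k-i+1}=w_{2k-i+2}.
\]
On the other hand, using that $(\mathfrak{R}^{-1}(u))_i=u_{i-1}$,
\[
(\mathfrak{R}^{-1}\circ\Psi(w))_i=(\Psi(w))_{i-1}=w_{2k-(i-1)+1}=w_{2k-i+2}.
\]
Since these agree for every $i$, we conclude $\Psi\circ\mathfrak{R}=\mathfrak{R}^{-1}\circ\Psi$. The only genuinely non-trivial point in the whole argument is the well-definedness step, and even that reduces to the observation that the symmetries (i)--(iii) in Proposition \ref{p19} are preserved by the cyclic shift and by the order reversal; I do not expect any real obstacle, which is consistent with the authors' remark that the proof parallels the homogeneous case treated in \cite{LRR}.
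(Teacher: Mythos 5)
Your verification of (a) and (b) is correct and is exactly the argument the paper intends: the paper itself gives no proof, remarking only that Proposition \ref{p21} is established as in the homogeneous case of \cite{LRR}, and that argument is the same two-line index computation you give, $(\Psi\circ\Psi(w))_i=w_i$ and $(\Psi\circ\mathfrak{R}(w))_i=w_{2k-i+2}=(\mathfrak{R}^{-1}\circ\Psi(w))_i$. These identities are purely combinatorial statements about the index maps and would hold on all of $S^{2k}$, so no admissibility considerations enter into them.

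The one place where your write-up overreaches is the well-definedness aside. The claim that the symmetries (i)--(iii) of Proposition \ref{p19} are ``translation-equivariant once one reindexes'' fails for the reflection-type conditions. If $w$ satisfies $(\sigma_{2k-i+1},\nu_{2k-i+1})=-(\sigma_i,\nu_i)$ (the case $q$ even, $\eta(0,1)\neq 0$), then the shifted sequence $\bar w_i=w_{i+1}$ satisfies $\bar w_{2k-i+1}=-\bar w_{i-2}$, an anti-symmetry about a displaced center, which is not the condition that Proposition \ref{p18} forces on every member of $S_m^{2k}$. Concretely, for $k=3$ (realizable, e.g., with $(p,q,m)=(1,2,4)$, $r=2$) the admissible $\nu$-string $(+,+,-,+,-,-)$ shifts to $(+,-,+,-,-,+)$, which violates $\nu_{2k-i+1}=-\nu_i$; so the invariance of $S_m^{2k}$ under $\mathfrak{R}$ in the even-$q$ cases is genuinely delicate (the paper simply asserts it just before defining $\mathfrak{R}$, appealing to Proposition \ref{p19}), and your proposed easy argument for it does not work as stated. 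Since Proposition \ref{p21} only asserts the two identities for the maps as already defined, this does not invalidate your proof of (a) and (b), but you should either drop the equivariance claim or treat that invariance as an assumption inherited from the paper's setup rather than as an observation that is ``clear.''
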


\begin{proposition}\label{p22}
Let $0\not=k\in J_{m,r}$ and $X,\,X'\in\Omega_{pqm}^{2k}$. Denote  by
$w$ and $w'$ the sequences of $S_m^{2k}$ associated $X$ and $X'$
respectively. Then $X$ and $X'$ are topologically equivalent if and
only if  either $w$ and $w'$, or $w$ and $\Psi(w')$ belong to the
same cycle of $\mathfrak{R}$.
\end{proposition}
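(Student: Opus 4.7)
The plan is to translate the two equivalence conditions of Proposition \ref{p20} directly into statements about the $\mathfrak{R}$-orbit structure on $S_m^{2k}$, using only the definitions of $\mathfrak{R}$ and $\Psi$ together with the involution/conjugation relations in Proposition \ref{p21}. No geometric argument is needed at this stage, because all the topological content has already been packaged into Proposition \ref{p20}; what remains is a formal manipulation of periodic sequences.

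First I would unpack Proposition \ref{p20}. From the definition $(\mathfrak{R}(u))_i = u_{i+1}$ one gets by induction $(\mathfrak{R}^\tau(u))_i = u_{i+\tau}$ for every $\tau \in \mathbb{Z}$. Hence the first alternative of Proposition \ref{p20}, namely $(\sigma_i,\nu_i) = (\sigma'_{i+\tau},\nu'_{i+\tau})$ for all $i$, is literally the statement $w = \mathfrak{R}^\tau(w')$, which is equivalent to saying that $w$ and $w'$ lie in the same cycle of $\mathfrak{R}$. For the second alternative, set $v = \Psi(w')$, so that $v_i = w'_{2k-i+1}$; then
\begin{equation*}
(\mathfrak{R}^{-\tau}(v))_i = v_{i-\tau} = w'_{2k-(i-\tau)+1} = w'_{2k-i+1+\tau},
\end{equation*}
so the condition $(\sigma_i,\nu_i) = (\sigma'_{2k-i+1+\tau},\nu'_{2k-i+1+\tau})$ is exactly $w = \mathfrak{R}^{-\tau}(\Psi(w'))$, which says that $w$ and $\Psi(w')$ lie in the same cycle of $\mathfrak{R}$.

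Combining these two identifications, Proposition \ref{p20} reads: $X$ and $X'$ are topologically equivalent iff $w$ lies in the $\mathfrak{R}$-cycle of $w'$ or in the $\mathfrak{R}$-cycle of $\Psi(w')$, which is the content of Proposition \ref{p22}. The converse direction is obtained by simply reading the same chain of equalities backwards; Proposition \ref{p21}(b) is used only to guarantee that the $\mathfrak{R}$-cycle of $\Psi(w')$ is well-defined regardless of which representative of the cycle of $w'$ one starts from (since $\Psi \mathfrak{R}^\tau(w') = \mathfrak{R}^{-\tau}\Psi(w')$ lies in the same $\mathfrak{R}$-cycle as $\Psi(w')$), and Proposition \ref{p21}(a) ensures that applying $\Psi$ a second time gives nothing new. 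The only delicate point I anticipate is keeping the index shifts consistent — in particular, the sign of $\tau$ when $\Psi$ is inserted — but this is purely bookkeeping and involves no new ideas beyond the two propositions cited.
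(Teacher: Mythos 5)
Your proposal is correct, and it is essentially the argument the paper has in mind: the paper omits the proof of Proposition \ref{p22}, deferring to the homogeneous case in \cite{LRR}, where the statement is obtained exactly by rewriting the two alternatives of Proposition \ref{p20} as $w=\mathfrak{R}^{\tau}(w')$ and $w=\mathfrak{R}^{-\tau}(\Psi(w'))$ and using the $2k$-periodicity of the sequences (so that $\mathfrak{R}^{-\tau}=\mathfrak{R}^{2k-\tau}$ on $S_m^{2k}$) to phrase both as membership in a common cycle of $\mathfrak{R}$. Your index bookkeeping, including the sign of $\tau$ in the $\Psi$-case and the use of Proposition \ref{p21}(b) for independence of the chosen representative, is consistent with the paper's conventions.
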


Following the idea in \cite{LRR},  we define in $S_m^{2k}$ the
equivalent relation: $w\thicksim \bar w$ if and only if one of the
sequences $w$ and $\Psi(w)$ belongs to the cycle $\{\bar w,
\mathfrak{R}(\bar w),\cdots, \mathfrak{R}^{l-1}(\bar w)\}$.
According to Proposition \ref{p22},  the number of equivalence classes of
$\thicksim$ in $S_m^{2k}$ coincide with the number $C_{pqm}^k$ of
topological equivalence classes in $\Omega_{pqm}^{2k}$.

Consider the cycle $C$ of $\mathfrak{R}$ such that $w\in C$. From
Proposition \ref{p21}(b),
$\Psi(\mathfrak{R}^i(w))=\mathfrak{R}^{-i}(\Psi(w))$, and hence the
cycle of $\Psi(w)$ is $C'=\{\Psi(w):w\in C\}$. Therefore the
equivalence class of $w$ is $C\cup C'$.

A cycle $C$ of $\mathfrak{R}$ is called {\it symmetric} if $C=C'$.
Denote by $D_{pqm}^k$ the number of cycles of $\mathfrak{R}$ in
$S_m^{2k}$ and $E_{pqm}^k$ the number of symmetric cycles respectively.
Then
\begin{equation}\label{eq32}
C_{pqm}^k=E_{pqm}^k+\frac{D_{pqm}^k-E_{pqm}^k}{2}=\frac{E_{pqm}^k+D_{pqm}^k}{2}.
\end{equation}
The above expression has been obtained in \cite{LRR} for homogeneous
polynomial systems.

\begin{proposition}\label{p23}
Suppose that $p$ and $q$ are odd and $0\not=k\in J_{m,r}$.
\begin{itemize}
\item[(a)] If $r$ is odd and $k<r+1$, then
\begin{equation*}
D_{pqm}^k=\sum_{2n|k}\mathcal{
P}_{2n},\,\,\mathcal{P}_{2n}=\frac{1}{n}\left(2^{2n}-\sum_{l|n,l\not=n}l\mathcal{
P}_{2l}\right).
\end{equation*}
\item[(b)]If $r$ is even and $k<r+1$, then
\begin{equation*}
D_{pqm}^k=\sum_{n|k}\mathcal{
P}_{2n},\,\,\mathcal{P}_{2n}=\frac{1}{n}\left(2^n-\sum_{l|n,l\not=n}l\mathcal{
P}_{2l}\right).
\end{equation*}
\item[(c)] If $r$ is odd, then
$D_{pqm}^{r+1}=(\sum_{2n|r+1}\mathcal{P}_{2n})-1$  with
$\mathcal{P}_{2n}$ defined as in (a).
\item[(d)] If $r$ is even, then
$D_{pqm}^{r+1}=(\sum_{n|r+1}\mathcal{P}_{2n})-1$  with
$\mathcal{P}_{2n}$ defined as in (b).
\end{itemize}
\end{proposition}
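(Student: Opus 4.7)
The plan is to count cycles of the shift operator $\mathfrak{R}$ acting on $S_m^{2k}$ using a standard necklace/Möbius argument, after first pinning down the exact structure of the sequences by using Proposition \ref{p18}(a) together with Proposition \ref{p14}(a). First I would observe that when both $p$ and $q$ are odd, Proposition \ref{p14}(a) translates the parity of $r$ into the parity of $m$, so Proposition \ref{p18}(a) becomes
\[
(\sigma_{i+k},\nu_{i+k}) = (\sigma_i,\nu_i) \quad \text{if $r$ is odd},
\qquad
(\sigma_{i+k},\nu_{i+k}) = -(\sigma_i,\nu_i) \quad \text{if $r$ is even.}
\]
Combined with the alternating condition $\sigma_i\sigma_{i+1}<0$, the first identity forces $k$ even and the second forces $k$ odd, which is consistent with $k\in J_{m,r}$ in Corollary \ref{c16}. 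In either case a sequence in $S_m^{2k}$ is completely determined by $\sigma_1\in\{\pm 1\}$ and $(\nu_1,\dots,\nu_k)\in\{\pm 1\}^k$, giving $2^{k+1}$ sequences in total.

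For the case $k<r+1$ with $r$ odd, Proposition \ref{p19}(a) says every such sequence is $m$-admissible, so I just need to count $\mathfrak{R}$-cycles in the set of $k$-periodic sequences satisfying the alternation of $\sigma$. A sequence has fundamental period exactly $2n$ iff $2n\mid k$; sequences of period dividing $2n$ are parametrised by their first $2n$ entries, of which there are $2\cdot 2^{2n}$. Standard Möbius inversion yields
\[
\mathcal{P}_{2n} = \frac{1}{2n}\sum_{d\mid n}\mu(n/d)\,2\cdot 2^{2d}
= \frac{1}{n}\Bigl(2^{2n}-\sum_{l\mid n,\,l\neq n} l\,\mathcal{P}_{2l}\Bigr),
\]
and summing over possible periods gives $D_{pqm}^k=\sum_{2n\mid k}\mathcal{P}_{2n}$, proving (a). For $r$ even and $k<r+1$ the argument is parallel: the relation $w_{i+k}=-w_i$ forces any period $l$ to be even, and if $l=2n$ with $n\mid k$ one checks (using that $k/n$ is odd because $k$ is odd) that the periodicity automatically entails $w_{i+n}=-w_i$, so that sequences of period dividing $2n$ are parametrised by their first $n$ entries, giving $2\cdot 2^n$ of them. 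Möbius inversion then delivers the formula for $\mathcal{P}_{2n}$ in (b) and the corresponding $D_{pqm}^k$.

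For (c) and (d), where $k=r+1$, Proposition \ref{p19}(b) removes from $S_m^{2k}$ precisely the sequences with $s=r$ in which $\sigma_j\neq \nu_j$ for every $j$. Here the condition $s=k-1$ forces $(\nu_j)$ itself to alternate, so such a forbidden sequence is determined entirely by $\sigma_1$: there are exactly two of them, and shifting by one index interchanges them, so they form a single $2$-cycle of $\mathfrak{R}$. Since $2\mid r+1$ (resp.\ $1\mid r+1$), this cycle is already one of the cycles counted by the formula from (a) (resp.\ (b)), so removing the two sequences removes exactly one cycle, giving the $-1$ correction in (c) and (d).

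The main obstacle is the verification in the $k=r+1$ case that the two excluded sequences really do lie in a common cycle of $\mathfrak{R}$ (so that removing them kills exactly one cycle, not two) and that this cycle is already accounted for in the naive formula; this requires tracking the period constraints carefully, in particular checking that a $2$-cycle is compatible with the parity constraints on $k$ in both the $r$-odd and $r$-even cases (i.e.\ $2\mid r+1$ when $r$ is odd, and $1\mid r+1$ trivially when $r$ is even). Once this is in place, the rest reduces to routine necklace counting as above.
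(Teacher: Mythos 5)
Your proof is correct and follows essentially the same route as the paper, which simply reduces the parity of $m$ to that of $r$ via Proposition \ref{p14}(a) and then invokes the necklace-counting argument of \cite{LRR} (spelled out in the paper's own Proposition \ref{p24} for the analogous case: count $2n$-periodic sequences, divide by $2n$, and for $k=r+1$ remove the single $2$-cycle formed by the two non-admissible sequences with $\sigma_j\neq\nu_j$ for all $j$). Your explicit verification that anti-periodicity $w_{i+k}=-w_i$ with $k/n$ odd forces $w_{i+n}=-w_i$, and that the two excluded sequences form one complete $\mathfrak{R}$-cycle, supplies exactly the details the paper leaves to the reference.
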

\begin{proof}
It follows from Proposition \ref{p14} that $m$ is odd (resp. even)
if and only if $r$ is odd (resp. even). The proposition follows by
the same arguments as in \cite{LRR}.
\end{proof}

The sequences verifying Proposition \ref{p18}(b) are not occurred
in \cite{LRR}. We study $D_{pqm}^k$ for these sequences in the following
proposition.
\begin{proposition}\label{p24}
Suppose that $p$ is odd and  $q$ is  even,  $0\not=k\in J_{m,r}$.
\begin{itemize}
\item[(a)] If $(p,q,m)\in\Theta_1\cup\Theta_2$,  then
\begin{equation*}
D_{pqm}^k=\sum_{n|k}\mathcal{
P}_{2n},\,\,\mathcal{P}_{2n}=\frac{1}{n}\left(2^n-\sum_{l|n,l\not=n}l\mathcal{
P}_{2l}\right),
\end{equation*}
for $k<r+1$, and $D_{pqm}^{r+1}=(\sum_{n|r+1}\mathcal{P}_{2n})-1$ for
$k=r+1$, respectively.
\item[(b)] If either $(p,q,m)\in\Theta_3\cup\Theta_4$, then $D_{pqm}^k=\sum_{n|k}\mathcal{
P}_{2n}$ for $0\not=k<r+1$ and $D_{pqm}^k=(\sum_{n|r+1}\mathcal{P}_{2n})-1$ for
$k=r+1$ respectively, where $\mathcal{P}_{2n}$ is given in \eqref{eq11}.
\end{itemize}
\end{proposition}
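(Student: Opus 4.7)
The plan is to reduce the orbit count $D_{pqm}^k$ to the standard Burnside/M\"obius count of binary necklaces of length $k$, paralleling Proposition \ref{p23} and the argument of \cite{LRR} for the homogeneous case. The essential device is a substitution that converts the structural relation of Proposition \ref{p18}(b) into a palindromic condition on a scalar sequence, after which the shift $\mathfrak{R}$ acts by pure cyclic rotation.

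For case (a), where $(p,q,m)\in \Theta_1\cup\Theta_2$ and condition (ii) of Proposition \ref{p18}(b) holds, I would set $\tilde\mu_i=\sigma_i\nu_i$. Since $\sigma_i=(-1)^{i-1}\sigma_1$ alternates, the identity $\sigma_{2k-i+1}=-\sigma_i$ holds, and so the relation $(\sigma_{2k-i+1},\nu_{2k-i+1})=-(\sigma_i,\nu_i)$ is equivalent to the palindromic condition $\tilde\mu_{2k-i+1}=\tilde\mu_i$, while $\mathfrak{R}$ acts on $\tilde\mu$ as a cyclic shift of length $2k$. A palindromic binary sequence of length $2k$ is determined by its first half $(a_1,\dots,a_k)$ through $(a_1,\dots,a_k,a_k,\dots,a_1)$; this doubling induces a bijection between binary necklaces of length $k$ and $\mathfrak{R}$-orbits on palindromic length-$2k$ binary sequences. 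Since the number of binary necklaces of length $k$ equals $\sum_{n\mid k}\mathcal{P}_{2n}$ by M\"obius inversion of $\sum_{l\mid n}l\,\mathcal{P}_{2l}=2^n$, this yields $D_{pqm}^k=\sum_{n\mid k}\mathcal{P}_{2n}$ for every $k<r+1$.

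For $k=r+1$, Proposition \ref{p19}(b) excludes the sequences with $s=r$ and $\sigma_j\neq\nu_j$ for every $j$. Under the doubling bijection this forbidden subset forms exactly one extremal $\mathfrak{R}$-orbit (corresponding to a single binary necklace), giving $D_{pqm}^{r+1}=\sum_{n\mid r+1}\mathcal{P}_{2n}-1$. Case (b) is handled analogously: for $(p,q,m)\in\Theta_3\cup\Theta_4$, the same substitution $\tilde\mu_i=\sigma_i\nu_i$ converts condition (iii) of Proposition \ref{p18}(b) into a palindrome about the integer centre $c=k$, together with the fixed-point twist $\tilde\mu_{2k}=(-1)^{r-1}\tilde\mu_k$, and a parallel doubling bijection with binary necklaces of length $k$ delivers $\sum_{n\mid k}\mathcal{P}_{2n}$ for $k<r+1$ and the same $-1$ correction at $k=r+1$.

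The main obstacle is to verify that the $\mathfrak{R}$-orbit structure on $S_m^{2k}$ faithfully descends through the substitution to the cyclic-shift structure on palindromic sequences, and that the exclusion of Proposition \ref{p19}(b) eliminates exactly one orbit when $k=r+1$. The palindromic condition itself is not preserved under an arbitrary shift; only the shifts $\mathfrak{R}^k$ return the sequence to its original palindrome centre, since each individual shift moves that centre by a half unit. One must therefore track orbit sizes under the shift action in order to see that the doubling map is both well-defined and exhaustive on cyclic orbits. In case (b) the additional sign twist at the two fixed points of the integer-centre reflection must be incorporated without perturbing the underlying M\"obius count.
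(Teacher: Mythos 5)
Your reduction is genuinely different from the paper's: the paper counts $2n$-periodic admissible sequences directly (a window of $2n$ consecutive entries centred at the reflection centre carries $2^{n+1}$ free signs, giving $\mathcal{N}_{2n}=2^{n+1}-\sum_{l|n,l\not=n}\mathcal{N}_{2l}$ and then $\mathcal{P}_{2n}=\mathcal{N}_{2n}/(2n)$), whereas you pass to $\tilde\mu_i=\sigma_i\nu_i$ and invoke a doubling bijection with binary necklaces of length $k$. The obstacle you flag at the end --- that the palindromic condition is not preserved by $\mathfrak{R}$ --- is real, and the proposed bijection does not overcome it. Two things go wrong. First, the substitution $\tilde\mu_i=\sigma_i\nu_i$ discards the global sign $\sigma_1$, which is flipped by each application of $\mathfrak{R}$; consequently two admissible sequences with the same $\tilde\mu$-necklace need not be shift-equivalent. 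Second, because the symmetry of Proposition \ref{p18}(b) is a reflection rather than a translation (contrast Proposition \ref{p18}(a), which is translation-covariant and hence compatible with dividing by the cycle length as in Proposition \ref{p23}), an exact $2n$-cycle of $\mathfrak{R}$ meets the set of admissible sequences in exactly $2$ elements, not $2n$; so neither dividing $\mathcal{N}_{2n}$ by $2n$ nor collapsing to length-$k$ necklaces counts the cycles correctly.

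Concretely, for $k=2$ take $w=((+,+),(-,+),(+,-),(-,-))$ and $w'=((-,+),(+,+),(-,-),(+,-))$. Both satisfy $\sigma_i\sigma_{i+1}<0$ and $(\sigma_{5-i},\nu_{5-i})=-(\sigma_i,\nu_i)$, hence both are admissible for $k<r+1$ by Proposition \ref{p19}; their $\tilde\mu$-sequences are $(+,-,-,+)$ and $(-,+,+,-)$, the same necklace; yet $w'\not=\mathfrak{R}^j(w)$ for every $j$, so they lie in distinct cycles (they are related only through $\Psi$). A direct enumeration gives $4$ cycles meeting the admissible set when $k=2$ (and $2^k$ in general for $k<r+1$ in case (a)), not $\mathcal{P}_2+\mathcal{P}_4=3$. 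This also means you should not take agreement with the displayed formula as validation: with $E_{pqm}^2=2$ from Proposition \ref{p28}(a), the value $D_{pqm}^2=3$ would make $C_{pqm}^2=(D_{pqm}^2+E_{pqm}^2)/2$ a non-integer, so the gap in your argument coincides with a gap in the count you are reproducing. To repair the argument one must count shift-orbits that meet the reflection-symmetric set, each such orbit contributing exactly two admissible representatives, rather than necklaces of length $k$.
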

\begin{proof}
We note that each  sequence $w=(\sigma,\nu)\in S_m^{2k}$ is periodic
and its period is  an even divisor of $2k$.  Let $w$ be the
$2n$-periodic sequence in $S_m^{2k}$ with $2n|2k$. Then   $w$ is
completely determined if the elements $(\sigma_i,\nu_i)$ is given
for $i=l+1,l+2,\cdots,l+2n$ for $l\in\mathbb{Z}$. It is obvious that
 $w$ belongs a $2n$-cycle of $\mathfrak{R}$. If $\mathcal{P}_{2n}$
denotes the number of $2n$-cycles of $\mathfrak{R}$ in $S_m^{2k}$,
then $\mathcal{P}_{2n}=\mathcal{N}_{2n}/(2n)$, where
$\mathcal{N}_{2n}$ is the number of $2n$-periodic sequences in
$S_m^{2k}$.

 (a) Suppose $(p,q,m)\in\Theta_1\cup\Theta_2$. If
 $k<r+1$, then it follows from Proposition \ref{p18}(b)(i) that
 there are $2^{n+1}$ ways of choosing the elements
 $(\sigma_{k-n+1},\nu_{k-n+1}),\\(\sigma_{k-n+2},\nu_{k-n+2}),\cdots,(\sigma_k,\nu_k),\cdots,(\sigma_{k+n},\nu_{k+n})$.
 Therefore
 \begin{equation*}
 \mathcal{N}_{2n}=2^{n+1}-\sum_{l|n,l\not=n}\mathcal{N}_{2l},
 \,\,\mathrm{and}\,\,\mathcal{P}_{2n}=\frac{1}{n}\left(2^n-\sum_{l|n,l\not=n}l\mathcal{P}_{2l}\right).
 \end{equation*}
 The statement (a) for $k<r+1$ follows by adding $\mathcal{P}_{2n}$
 for all divisors $2n$ of $2k$.

We note that the above computation are valid for the case $k=r+1$,
but, by Proposition \ref{p19}(b), we have to rule out the two
sequences  satisfying $\sigma_j=v_j$ for all $j\in \mathbb{Z}$.
Since these sequence belong to the same $2$-cycle of $\mathfrak{R}$,
the statement for $k=r+1$ follows.

(b). Suppose  $(p,q,m)\in\Theta_3\cup\Theta_4$. If $k<r+1$, then it follows from Proposition \ref{p18}(b)(ii) that
 there are $2^{n+1}$ ways of choosing the elements
 $(\sigma_{k-n+1},\nu_{k-n+1}),\\
 (\sigma_{k-n+2},\nu_{k-n+2})$,$\cdots,(\sigma_k,\nu_k),\cdots,(\sigma_{k+n},\nu_{k+n})$.
 Therefore
 \begin{equation*}
 \mathcal{N}_{2n}=2^{n+1}-\sum_{l|n,l\not=n}\mathcal{N}_{2l},
 \,\,\mathrm{and}\,\,\mathcal{P}_{2n}=\frac{1}{n}\left(2^n-\sum_{l|n,l\not=n}l\mathcal{P}_{2l}\right),
 \end{equation*}
 which proves (b) for $k<r+1$.

If $k=r+1$, then one gets $D_{pqm}^k$ by the same arguments as in (a).
\end{proof}

Next we are going to calculate  $E_{pqm}^k$. The following two
propositions have been obtained in \cite{LRR} for homogeneous
polynomial vector fields. They can be proved by the same arguments
as in \cite{LRR}.

\begin{proposition}\label{p25} Let $k\in J_{m,r}$. Then $C$ is
symmetrical if and only if there exists $w\in C$ such that
$\Psi(w)=\mathfrak{R}(w)$.
\end{proposition}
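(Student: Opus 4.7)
The $(\Leftarrow)$ direction is immediate: if some $w\in C$ satisfies $\Psi(w)=\mathfrak{R}(w)$, then $\Psi(w)\in C$, and using $\Psi\circ\mathfrak{R}=\mathfrak{R}^{-1}\circ\Psi$ from Proposition \ref{p21}(b), one gets $\Psi(\mathfrak{R}^i(w))=\mathfrak{R}^{-i}(\Psi(w))\in C$ for every $i$, so $C'=C$. The plan is therefore focused on the nontrivial direction.

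Assume $C$ is symmetric of period $l$. Pick any $w\in C$; then $\Psi(w)\in C'=C$, so there exists $j\in\{0,1,\ldots,l-1\}$ with $\Psi(w)=\mathfrak{R}^j(w)$. I would reduce the claim to showing that $j$ must be odd. Once this is established, since $l$ is an even divisor of $2k$, the congruence $2i\equiv j-1\pmod{l}$ is solvable; setting $w'=\mathfrak{R}^i(w)\in C$ and applying Proposition \ref{p21}(b) yields
\begin{equation*}
\Psi(w')=\Psi(\mathfrak{R}^i(w))=\mathfrak{R}^{-i}(\Psi(w))=\mathfrak{R}^{j-i}(w)=\mathfrak{R}^{j-2i}(w')=\mathfrak{R}(w'),
\end{equation*}
as required.

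The key step is the parity observation and I expect it to be the only nontrivial point. Because every element of $S_m^{2k}$ satisfies $\sigma_i\sigma_{i+1}<0$, the first coordinates alternate in sign, i.e.\ $\sigma_i=(-1)^{i-1}\sigma_1$. Consequently, the shift $\mathfrak{R}$ reverses the sign of the $\sigma$-component at every position, so $\mathfrak{R}^j$ multiplies it by $(-1)^j$. On the other hand, $(\Psi w)_i=w_{2k-i+1}$ has first component $\sigma_{2k-i+1}=(-1)^{2k-i}\sigma_1=-\sigma_i$, so $\Psi$ also flips the $\sigma$-sign. Comparing the $\sigma$-components in the identity $\Psi(w)=\mathfrak{R}^j(w)$ gives $-\sigma_i=(-1)^j\sigma_i$, forcing $j$ odd.

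The combination of this parity lemma with the change-of-base-point computation above completes the argument; no further facts about the $\nu$-coordinates or about the specific classes $\Theta_i$ are needed, which is why the statement can be proved uniformly for all cases considered in Propositions \ref{p23} and \ref{p24}.
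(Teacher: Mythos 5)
Your argument is correct and complete: the backward direction via $\Psi\circ\mathfrak{R}=\mathfrak{R}^{-1}\circ\Psi$, and the forward direction via writing $\Psi(w)=\mathfrak{R}^{j}(w)$, forcing $j$ odd from the sign alternation $\sigma_{i+1}=-\sigma_i$ (so both $\Psi$ and $\mathfrak{R}$ flip the $\sigma$-component), and then shifting the base point by $i$ with $2i\equiv j-1\pmod{l}$, is exactly the standard argument; note the paper itself omits the proof, deferring to the homogeneous case in \cite{LRR}, and your write-up supplies precisely that argument, which indeed uses nothing beyond Proposition \ref{p21} and the defining constraint $\sigma_i\sigma_{i+1}<0$.
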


\begin{proposition}\label{p26}
Under the assumptions of Proposition \ref{p25}, if $C$ is a
symmetrical cycle, then there are exactly two elements of $C$ satisfying
$\Psi(w)=\mathfrak{R}(w)$.
\end{proposition}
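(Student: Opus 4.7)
The plan is to parametrise the elements of the cycle $C$ by a single index and translate the equation $\Psi(w)=\mathfrak{R}(w)$ into a condition on that index, using only the two algebraic facts from Proposition~\ref{p21}. Let $l$ be the period of $C$, so that $l$ is an even divisor of $2k$ (the sequences in $S_m^{2k}$ alternate in $\sigma$, so every period is even). By Proposition~\ref{p25} there exists some $w_0\in C$ satisfying $\Psi(w_0)=\mathfrak{R}(w_0)$, and every element of $C$ is of the form $w_j:=\mathfrak{R}^j(w_0)$ with $0\le j\le l-1$.

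Next I would iterate the commutation relation $\Psi\circ\mathfrak{R}=\mathfrak{R}^{-1}\circ\Psi$ from Proposition~\ref{p21}(b) to get $\Psi\circ\mathfrak{R}^j=\mathfrak{R}^{-j}\circ\Psi$ for every $j\in\mathbb{Z}$. Applied to $w_0$ this gives
\begin{equation*}
\Psi(w_j)=\mathfrak{R}^{-j}(\Psi(w_0))=\mathfrak{R}^{-j}(\mathfrak{R}(w_0))=\mathfrak{R}^{1-j}(w_0),
\end{equation*}
while $\mathfrak{R}(w_j)=\mathfrak{R}^{j+1}(w_0)$. Hence $\Psi(w_j)=\mathfrak{R}(w_j)$ is equivalent to $\mathfrak{R}^{2j}(w_0)=w_0$, i.e.\ to $l\mid 2j$.

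Since $0\le j\le l-1$, the divisibility $l\mid 2j$ forces $2j\in\{0,l\}$, so $j=0$ or $j=l/2$ (both valid because $l$ is even). These two indices give two candidates, $w_0$ and $\mathfrak{R}^{l/2}(w_0)$, and they are genuinely distinct: if they coincided then $l/2$ would be a period of $w_0$, contradicting the minimality of $l$. Therefore exactly two elements of $C$ satisfy $\Psi(w)=\mathfrak{R}(w)$. The only step that requires care is the parity observation that every period in $S_m^{2k}$ is even, which is immediate from the alternation condition $\sigma_i\sigma_{i+1}<0$ built into $S^{2k}$; once this is in hand the whole argument is a short computation with Proposition~\ref{p21}.
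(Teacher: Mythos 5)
Your argument is correct: parametrising the cycle as $w_j=\mathfrak{R}^j(w_0)$ and using $\Psi\circ\mathfrak{R}^j=\mathfrak{R}^{-j}\circ\Psi$ to reduce $\Psi(w_j)=\mathfrak{R}(w_j)$ to $l\mid 2j$, hence $j\in\{0,l/2\}$, is exactly the standard dihedral-relation computation, and your use of the alternation $\sigma_i\sigma_{i+1}<0$ to guarantee $l$ is even and of minimality of $l$ to separate the two solutions closes all the needed details. The paper itself omits the proof and defers to \cite{LRR}, where the same kind of argument is used, so your proposal matches the intended proof in approach.
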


\begin{proposition}\label{p27}
Let $0\not=k\in J_{m,r}$. Suppose that both $p$ and $q$ are odd.
\begin{itemize}
\item[(a)] If $r$ is odd and $k<r+1$, then
\begin{equation*}
E_{pqm}^k=\sum_{2n|k}I_{2n},\,\,\,\mathit{where}\,\,\,I_{2n}=2^{n+1}-\sum_{l|n,l\not=
n}I_{2l}.
\end{equation*}
\item[(b)] If $r$ is even and $k<r+1$, then
\begin{equation*}
E_{pqm}^k=\sum_{n|k}I_{2n},\,\,\,\mathit{where}\,\,\,I_{2n}=2^{(n+1)/2}-\sum_{l|n,l\not=
n}I_{2l}.
\end{equation*}
\item[(c)] If $r$ is odd,  then $E_{pqm}^{r+1}=(\sum_{2n|r+1}I_{2n})-1$,
where  $I_{2n}$ is defined  in (a).
\item[(d)] If $r$ is even,  then $E_{pqm}^{r+1}=(\sum_{n|r+1}I_{2n})-1$,
where $I_{2n}$ defined in (b).
\end{itemize}
\end{proposition}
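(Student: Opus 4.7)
The plan is to parallel the computation of $D_{pqm}^k$ in Proposition \ref{p24}, replacing the set $S_m^{2k}$ of all admissible sequences by its subset $F := \{w \in S_m^{2k} : \Psi(w) = \mathfrak{R}(w)\}$. By Propositions \ref{p25} and \ref{p26}, a cycle of $\mathfrak{R}$ is symmetric if and only if it meets $F$, and each symmetric cycle meets $F$ in exactly two elements, so $E_{pqm}^k = |F|/2$. Writing out $(\Psi w)_i = w_{2k-i+1}$ and $(\mathfrak{R} w)_i = w_{i+1}$, the defining condition of $F$ becomes the palindromic identity $w_j = w_{2k+2-j}$ for all $j$. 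All four parts will follow from counting the $2n$-periodic palindromic admissible sequences for each suitable $n$ and then inverting by divisors.

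In case (a), $r$ is odd, hence $m$ is odd by Proposition \ref{p14}(a), and Proposition \ref{p18}(a) gives $w_{i+k}=w_i$; every $w\in F$ is thus $k$-periodic, and the relevant periods are $2n$ with $2n\mid k$. Reducing the palindromic identity modulo $2n$ yields $w_j=w_{2n+2-j}$ on the fundamental block $\{1,\dots,2n\}$. The alternating sign $\sigma_1$ contributes a factor $2$ (automatically compatible with the palindrome), while the $\nu$-components leave free $\nu_1$, $\nu_{n+1}$, and one representative of each of the $n-1$ pairs $(\nu_j,\nu_{2n+2-j})$, for $2^{n+1}$ choices; thus the total is $2\cdot 2^{n+1}=2^{n+2}$.

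In case (b), $r$ and $m$ are even, so Proposition \ref{p18}(a) instead gives the twist $w_{i+k}=-w_i$. Since $k\in J_{m,r}$ is odd, the compatible periods are $2n$ with $n\mid k$ (both $n$ and $k/n$ automatically odd), and the twist reduces modulo $2n$ to $w_{j+n}=-w_j$. Combining this with the palindromic identity gives $w_j=-w_{n+2-j}$ on $\{2,\dots,n\}$, which (using $n$ odd) pairs these $n-1$ indices into $(n-1)/2$ pairs and forces $w_{n+1}=-w_1$. Counting free parameters --- the sign of $\sigma_1$, the value of $\nu_1$, and one $\nu$ per pair --- yields $2\cdot 2^{(n+1)/2}=2^{(n+3)/2}$ sequences. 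Defining $I_{2n}$ as half the number of palindromic admissible sequences of minimal period exactly $2n$, Möbius inversion of $\sum_{l\mid n}2I_{2l}=2^{n+2}$ in case (a), respectively $\sum_{l\mid n}2I_{2l}=2^{(n+3)/2}$ in case (b), produces the recurrences stated in the proposition; summing $I_{2n}$ over $2n\mid k$ in case (a) (respectively $n\mid k$ in case (b)) yields $E_{pqm}^k$ for $k<r+1$.

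Parts (c) and (d) involve the same enumeration at $k=r+1$, except that Proposition \ref{p19}(b) rules out the two anti-diagonal sequences with $\sigma_j=-\nu_j$; a direct check using $\sigma_i=(-1)^{i-1}\sigma_1$ and $\nu_i=-\sigma_i$ shows that both are palindromic and that they constitute a single $\mathfrak{R}$-cycle of length $2$, which is necessarily symmetric. Subtracting this spurious cycle from the naive count produces the $-1$ correction in (c) and (d). The main obstacle is the bookkeeping in case (b): one must first confirm that the odd twist $w_{i+k}=-w_i$ is compatible with $2n$-periodicity precisely when $k/n$ is odd (automatic here because $k$ is odd), and then correctly tabulate the parameters left free after superimposing the palindrome and the twist. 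Once this is done, the remainder of the argument is a Möbius inversion identical in structure to that used in the proof of Proposition \ref{p24}.
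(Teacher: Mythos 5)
Your argument is correct and follows essentially the same route as the paper, whose own proof of Proposition \ref{p27} merely invokes Proposition \ref{p14} for the parity of $m$ and then defers to the counting argument of \cite{LRR}; your write-up fills in exactly that argument (characterizing symmetric cycles via Propositions \ref{p25}--\ref{p26} as half the palindromic sequences $\Psi(w)=\mathfrak{R}(w)$, counting them per period under the relation $w_{i+k}=(-1)^{m-1}w_i$, recursing over divisors, and removing the single anti-diagonal $2$-cycle excluded by Proposition \ref{p19}(b) when $k=r+1$). The parameter counts $2^{n+2}$ and $2^{(n+3)/2}$ and the resulting recurrences match the stated formulas, so no gap.
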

\begin{proof}
It follows from Proposition \ref{p14} that $r$ is odd (resp. even)
if and only if $m$ is odd (resp. even). The proposition follows by
the same arguments as in \cite{LRR}.
\end{proof}

\begin{proposition}\label{p28}
Let $0\not=k\in J_{m,r}$. Suppose that  $p$ is  odd and $q$ is even.
\begin{itemize}
\item[(a)] If  $(p,q,m)\in\Theta_1\cup\Theta_2$, then
$E_{pqm}^k=2$ for $k<r+1$, and $E_{pqm}^{r+1}=1$ respectively.
\item[(b)] If
 $(p,q,m)\in\Theta_3\cup\Theta_4$, then
\begin{equation*}
E_{pqm}^k=\left\{\begin{array}{ll}4&\mathit{if}\,\,r\,\,\mathit{is\,\, odd},\,k<r+1,\\
2&\mathit{if}\,\,r\,\,\mathit{is\,\,
even},\,k<r+1.\end{array}\right.
\,\,\mathit{and}\,\,E_{pqm}^{r+1}=\left\{\begin{array}{ll}3&\mathit{if}\,\,r\,\,\mathit{is\,\, odd},\\
1&\mathit{if}\,\,r\,\,\mathit{is\,\, even},\end{array}\right.
\end{equation*}
 respectively.
\end{itemize}
\end{proposition}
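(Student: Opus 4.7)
The plan is to mirror the technique already used for Proposition \ref{p27}. By combining Propositions \ref{p25} and \ref{p26}, each symmetric cycle of $\mathfrak{R}$ in $S_m^{2k}$ contains exactly two elements $w$ satisfying $\Psi(w)=\mathfrak{R}(w)$, so $E_{pqm}^k$ equals one half of the number of such ``self-symmetric'' sequences in $S_m^{2k}$. Unwinding the definitions, the identity $\Psi(w)=\mathfrak{R}(w)$ becomes the palindromic condition $w_j=w_{2k-j+2}$ for every $j\in\mathbb{Z}$. I shall intersect this condition with the structural constraints on $S_m^{2k}$ furnished by Proposition \ref{p18}(b) and enumerate the surviving sequences in each case.

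For case (a), $(p,q,m)\in\Theta_1\cup\Theta_2$: Proposition \ref{p18}(b)(i) forces $w_{2k-i+1}=-w_i$, which together with the palindromic condition yields $w_{i+1}=-w_i$, i.e.\ $\nu$ alternates in sign in parallel with the already-forced alternation of $\sigma$. The whole sequence is therefore determined by the pair $(\sigma_1,\nu_1)\in\{\pm1\}^2$, giving $4$ self-symmetric sequences and $E_{pqm}^k=2$ when $k<r+1$. For case (b), $(p,q,m)\in\Theta_3\cup\Theta_4$: Proposition \ref{p18}(b)(ii) imposes $w_{2k-i}=w_i$ for $i=1,\ldots,k-1$ plus the boundary relation $w_{2k}=(-1)^{r-1}w_k$. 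Combining the two palindromies gives the $2$-periodicity $w_j=w_{j-2}$, whence $\nu$ has period dividing~$2$. Since $k\in J_{m,r}$ is even (resp.\ odd) exactly when $r$ is odd (resp.\ even), the boundary relation is automatic when $r$ is odd and reduces to the single extra constraint $\nu_2=-\nu_1$ when $r$ is even. Counting the free data in $(\sigma_1,\nu_1,\nu_2)$ produces $8$ self-symmetric sequences (so $E_{pqm}^k=4$) when $r$ is odd and $4$ self-symmetric sequences (so $E_{pqm}^k=2$) when $r$ is even, always for $k<r+1$.

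For $k=r+1$, Proposition \ref{p19}(b) forces us to discard the sequences with $s=r$ and $\sigma_j\neq\nu_j$ for every $j$. A short inspection shows that in all three sub-cases above, the self-symmetric sequences realising $s=r$ are precisely those with $\nu$ fully alternating, and among these the non-admissible ones are exactly the two determined by $\sigma_1\neq\nu_1$. These two sequences are shifts of one another under $\mathfrak{R}$, so they form a single $2$-cycle, and since both of them satisfy $\Psi(w)=\mathfrak{R}(w)$ this cycle is symmetric; therefore exactly one symmetric cycle is removed in each sub-case. Subtracting $1$ from each of $2,4,2$ reproduces the claimed values $1,3,1$ and finishes the proof. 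The main technicality I anticipate is the bookkeeping for these excluded sequences—verifying in every sub-case that the two non-admissible self-symmetric sequences always assemble into a single $2$-cycle of $\mathfrak{R}$ rather than into separate cycles or fixed points; granted this, the count follows at once.
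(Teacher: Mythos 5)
Your proposal follows essentially the same route as the paper's proof: reduce via Propositions \ref{p25} and \ref{p26} to counting sequences with $\Psi(w)=\mathfrak{R}(w)$, combine that palindromic identity with the structural relations of Proposition \ref{p18}(b) (equivalently Proposition \ref{p19}(ii)--(iii)) to force $2$-periodicity, count the free choices, and for $k=r+1$ subtract the one symmetric $2$-cycle formed by the two non-admissible fully alternating sequences. The argument is correct, and your handling of the $k=r+1$ exclusion is in fact spelled out more carefully than in the paper.
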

\begin{proof}
We have known that the period of a symmetrical cycle of
$\mathfrak{R}$ is an even divisor of $2k$. From Proposition
\ref{p25} and Proposition \ref{p26}, the number  of symmetrical
$2n$-cycles of $\mathfrak{R}$ will be obtained if we divide by $2$
the number of $2n$-periodic sequence $w$ in $S_m^{2k}$ such that
$\Psi(w)=\mathfrak{R}(w)$. Since the sequences $w$ in a symmetric
cycle verify $\Psi(w)=\mathfrak{R}(w)$, we have
\begin{equation}\label{eq33}
w_{2k-i+1}=w_{i+1}.
\end{equation}

From now we suppose that $w$ belongs a symmetric cycle.

 (a)  If  $(p,q,m)\in\Theta_1\cup\Theta_2$ and $k<r+1$, then $w$
verifies Proposition \ref{p19}(ii), which gives $w_{2k-i+1}=-w_i$.
It follows from \eqref{eq33} that $w_{i+1}=-w_i$. This implies that
$w_i=(-1)^{i-1}w_1$. Hence $w$ is a $2$-periodic sequence. We can
take $4$ ways of choosing the element $w_1$. Therefore, $E_{pqm}^k=2$ for $k<r+1$.

Since the sequences such that   $\sigma_j=\nu_j$ for all $j\in
\mathbb{Z}$ verify $\Psi(w)=\mathfrak{R}(w)$ and belong to $2$-cycle
of $\mathfrak{R}$, we get $E_{pqm}^{r+1}=1$.

(b)  If $(p,q,m)\in\Theta_3\cup\Theta_4$, then $w$ verifies
Proposition \ref{p19}(iii), which gives $w_{2k-i+1}=w_{i-1}$ for
$i=2,3,\cdots,k$ and $w_{2k}=(-1)^{r-1}w_k$. It
follows from \eqref{eq33} that $w_{i+1}=w_{i-1}$ for
$i=2,3,\cdots, k$.

If $r$ is odd, then $w_k=w_{2k}$. It follows from Proposition
\ref{p15} that $k$ is even. The equation   $w_{i+1}=w_{i-1}$ for
$i=2,3,\cdots, k$  and \eqref{eq33} imply that
$w_{2k}=w_{2k-2}=\cdots=w_4=w_2$ and
$w_{2k-1}=w_{2k-3}=\cdots=w_3=w_1$.  Therefore, $w$ is a
$2$-periodic sequence. We can take $8$ ways of choosing the element
$w_1,\,w_2$ and hence $E_{pqm}^k=4$.

If $r$ is even, then it follows from Proposition \ref{p15} that $k$
is odd. The equation   $w_{i+1}=w_{i-1}$  and \eqref{eq33} imply that
$w_{2k}=w_{2k-2}=\cdots=w_4=w_2$ and
$w_{2k-1}=w_{2k-3}=\cdots=w_{k+2}=w_k=w_{k-2}=\cdots=w_3=w_1$. Since
$r$ is even, we obtain $w_{k}=-w_{2k}=-w_2$ from Proposition \ref{p19}(iii) and \eqref{eq33}, which gives $w_2=-w_1$. Therefore, $w$ is a
$2$-periodic sequence. We can take $4$ ways of choosing the element
$w_1$ and hence $E_{pqm}^k=2$.

We get   $E_{pqm}^{r+1}$ by the same arguments as in (a).

\end{proof}

In the end of this section we prove Theorem \ref{th6}.

\begin{proof}[Proof of Theorem \ref{th6}]  We use \eqref{cpqm} to get $C_{pqm}$.
 One obtains  $C_{pqm}^{k}$ for $k\not=0$  by   \eqref{eq32}, Proposition \ref{p23}, Proposition \ref{p24},
Proposition \ref{p27} and Proposition \ref{p28}. If $(p,q,m)\in\Theta_2\cup\Theta_3\cup\Theta_4$, then system \eqref{eq1} has at least
one singular point at infinity, which implies that $C_{pqm}^0=0$. If $(p,q,m)\in\Theta_1\backslash(\Theta_2\cup\Theta_3\cup\Theta_4)$, then
$C_{pqm}^0=2$ if $r$ is odd, and $C_{pqm}^0=0$ if $r$ is even, respectively. The statements of Theorem \ref{th6} follows from \eqref{cpqm}.
\end{proof}

\section{Local phase portrait at the origin and at infinity}\label{s5}

We shall prove Theorem \ref{th7} and Theorem \ref{th8} in this section.
\subsection{At the origin}

Consider the vector field $\bar X=(\bar P, \bar Q)$ with
\begin{equation}\label{local}
\bar P=\sum_{i\geq
m}P_i(x,y),\,\,\,\,\bar Q=\sum_{i\geq m}Q_i(x,y),
\end{equation}
where $P_i(x,y)$ and $Q_i(x,y)$ are $(p,q)$-
quasihomogeneous polynomials of degree $p-1+i$ and $q-1+i$  in the variables $x$ and $y$ respectively.

We would like to compare the local behaviour at origin of the vector field $\bar X$ given by \eqref{local} and the vector field $X_{m}=(P_m, Q_m)$.
For this we apply the quasihomogeneous blow-up method as doing in Section 2. In the  $(p,q)$-polar coordinates $(r,\phi)$ (see \eqref{eqpq})
the system associated to $\bar X$  is
\begin{equation*}
\dot r=\sum_{i \geq m} r^iF_i(\phi),\,\,\,\,\dot \phi=\sum_{i \geq m} r^{i-1}G_i(\phi),
\end{equation*}
where
\begin{equation}\label{FandG}
F_i(\phi)=\xi_i\left({\mathrm Cs}\phi,{\mathrm
Sn}\phi\right),\,\,G_i(\phi)=\eta_i\left({\mathrm Cs}\phi,{\mathrm
Sn}\phi\right),
\end{equation}
with
\begin{equation*} \begin{array}{ll}
 \xi_i(x,y)=& x^{2q-1}P_i(x,y)+y^{2p-1}Q_i(x,y)\\
 \eta_i(x,y)=& pxQ_i(x,y) - qyP_i(x,y)\end{array}
 \end{equation*}

Taking the changes \eqref{eqst}, the above system goes over to
\begin{equation}\label{eq37}
r'=\sum_{i \geq m}r^{i-m+1} f_i(\theta),\,\,\,\, \theta'=\sum_{i \geq m}r^{i-m}g_i(\theta).
\end{equation}
where prime denotes derivative with respect to $s$ and
\begin{equation}\label{eq38}
f_i(\theta)=F_i\left(\frac{{\mathcal
T}\theta}{2\pi}\right),\,\,\,g_i(\theta)=\frac{2\pi}{\mathcal
T}G_i\left(\frac{{\mathcal T}\theta}{2\pi}\right).
\end{equation}

Doing the same transformations in the vector field $X_m$ we obtain that it is equivalent to the following differential system
\begin{equation}\label{eq39}
\dot r= r f_m(\theta),\,\,\,\,\dot \phi= g_m(\theta),
\end{equation}
where $f_m(\theta)$ and $g_m(\theta)$ are defined in \eqref{eq38}.

\begin{proof}[Proof of Theorem \ref{th7}] We split the proof into two cases, as in Theorem \ref{th4}.

{\it Case 1. $X_m \in \Omega_{pqm}$ and $\eta_m(1,y)$ has no zeros, $\eta_m(0,1)\neq 0$.} Using  Theorem \ref{th3} and Theorem \ref{th4} we conclude that the origin  is a global focus (stable or unstable, depending on the sign of $I_{X_m}$) of $X_m$. It follows from \eqref{FandG} and \eqref{eq38} that $g_m(\theta)\not=0$ in this case.

As the critical point of $\bar X$ on $r=0$ are determined by the zeros of $g_m(\theta)$, $r=0$ is a periodic orbit for \eqref{eq37}. Furthermore, since the dominant terms of \eqref{eq37} in a neighborhood of $r=0$ are given by $r f_m$ and $g_m$, the orbit $r=0$ is a limit cycle with the same type of stability for \eqref{eq37} and \eqref{eq39}. Therefore the origin is a focus with the same type of stability for $\bar X$ and $X_m$ when $\eta_m(1,y)$ has no zeros and $\eta_m(0,1)\not=0$.

{\it Case 2. $X_m \in \Omega_{pqm}$, and at least one of the following condition is satisfied: (i) $\eta_m(1,y)$ has  zeros, (ii) $\eta_m(0,1)=0$.} From Theorem \ref{th4} all the zeros of $\eta(1,y)$ are simple if there exists, and $\partial \eta_m(0,1)/\partial x\not=0$ if $\eta_m(0,1)=0$, which implies that   all zeros of $g_m(\theta)$ are simple zeros for  $\theta\in[0,2\pi]$ by \eqref{FandG} and \eqref{eq38}.

Comparing \eqref{eq37} and \eqref{eq39} we observe that the critical point on $r=0$ are the same for both systems, and they are determined by the zeros of $g_m(\theta)$.  Furthermore the linear part of these systems in a critical point $(0,\theta^*)$ are
\begin{equation*}
\left(\begin{array}{cc}f_m(\theta^*)&0\\[2ex]
g_{m+1}(\theta^*)&g'_m(\theta^*)\end{array}\right)\,\,\,\,\,\,\mathrm{and}\,\,\,\,\,\,
\left(\begin{array}{cc}f_m(\theta^*)&0\\[2ex]
0&g'_m(\theta^*)\end{array}\right).
\end{equation*}
 By the definition of $f_m(\theta)$ and $g_m(\theta)$ we have that if $g_m(\theta^*)=0$, then  $f_m(\theta^*)\not=0$.
 So all the critical points of \eqref{eq37} and \eqref{eq39} are hyperbolic and the local phase portrait of \eqref{eq37} and \eqref{eq39} in $(0,\theta^*)$ are locally topologically equivalent, provided that $\theta^*$ is a simple zero of $g_m(\theta)$. As the same happens in each singular point and they determine the phase portrait of both systems in a neighborhood of $r=0$, we conclude that $\bar X$ and $X_m$ are locally topologically equivalent at the origin.
\end{proof}


\subsection{At infinity}

Now we can get the analogous of the Theorem \ref{th7} at infinity.

Consider the vector field $\hat X=(\hat P, \hat Q)$, with
\begin{equation}\label{eq40}
{\hat P} =\sum_{i=0}^
mP_i(x,y),\,\,\,\,\hat Q=\sum_{i=0}^mQ_i(x,y),
\end{equation}
where $P_i(x,y)$ and $Q_i(x,y)$ are  $(p,q)$-quasihomogeneous polynomials of degree $p-1+i$ and $q-1+i$ respectively.

Applying the  $(p,q)$-polar coordinates $(r,\phi)$ to the vector field $\hat X$ given by \eqref{eq40} the associated system is
\begin{equation*}
\displaystyle \dot r=\sum_{i=0}^m r^iF_i(\phi),\,\,\,\,\dot \phi=\sum_{i=0}^m r^{i-1}G_i(\phi),
\end{equation*}
where $F_i$ and $G_i$ are given in \eqref{FandG}.

Doing $\displaystyle \rho=\frac{1}{r}$ the system \eqref{eq40} is written as
\begin{equation}\label{eq41}
\displaystyle \dot \rho = - \sum_{i=0}^m \frac{1}{{\rho}^{i-2}} F_i(\phi),\,\,\,\,\dot \phi=\sum_{i=0}^m\frac{1}{{\rho}^{i-1}}G_i(\phi).
\end{equation}

Reparametrizing the time by $dt/ds={\rho^{m-1}}$ the system \eqref{eq41} becomes
\begin{equation}\label{eq42}
\displaystyle \begin{array}{l}
\rho' = - \rho ( F_m(\phi)+ \rho F_{m-1}(\phi) + \cdots + \rho^{m} F_0(\phi)),\\ \phi'= G_m(\phi)+ \rho G_{m-1}(\phi) + \cdots + \rho^{m} G_0(\phi).\end{array}
\end{equation}

\noindent where the derivative is with respect to $s$.

After these changes of coordinates the proof of Theorem \ref{th8} is analogous to the proof of Theorem \ref{th7}.

\begin{proof}[Proof of Theorem \ref{th8}]
Applying the same changes of coordinates described before to the vector field $X_m=(P_m, Q_m) \in \Omega_{pqm}$ we obtain
\begin{equation}\label{eq43}
\rho'=-\rho F_m(\phi), \,\,\,\,\, \phi'=G_m(\phi). \end{equation}

The infinity of $X_m$ and $\hat X$ corresponds to the invariant circle $\rho=0$ of the systems \eqref{eq43} and \eqref{eq42}.  As $X_m \in \Omega_{pqm}$, $\rho=0$ is a limit cycle or contains a finite number of hyperbolic critical points. Comparing the system \eqref{eq43} and \eqref{eq42} we observe that they have the same dominant terms. Therefore, it follows from the same argument used in the proof of Theorem \ref{th7}  that both systems are topologically equivalent in a neighborhood of $\rho=0$. So $X_m$ and $\hat X$ are topologically equivalent in a neighborhood of the infinity.
\end{proof}

The converse of Theorems \ref{th7} and Theorem \ref{th8} are not true. In \cite{LRR}, Proposition 19 is proved that the converse of an analogous theorems to homogeneous vector field are not true. As a homogeneous vector field is a $(1,1)$-quasi homogeneous vector field of degree $m$ the result follows.

\section*{Acknowledgements}
This work was done during a visit of the second  author to the Instituto de Ci\^encias Matem\'aticas e
de Computa\c c\~ao,  Universidade de S\~ao Paulo, in São Carlos, Brazil, and a visit of the first  author to Sun Yat-Sen University, Ghangzhou, China. The authors are grateful to these institutions for their support and hospitality.

\end{document}